%********************************************************************

\documentclass{amsart}

\usepackage{graphicx,amssymb,amsmath,xcolor,enumerate,latexsym}
\usepackage{hyperref}
\hypersetup{
bookmarks=true,         %show bookmarks bar?
pdffitwindow=false,     % window fit to page when opened
pdfstartview={FitH},    % fits the width of the page to the window
colorlinks=true,      %false: boxed links; true: colored links
citecolor=red,
}

\usepackage{enumitem}

\usepackage{float}

\usepackage{mathrsfs}

\setcounter{tocdepth}{3}
\makeatletter
\def\l@subsection{\@tocline{2}{0pt}{2.5pc}{5pc}{}}
\makeatother

\usepackage{geometry}
\geometry{left=3.5cm,right=3.5cm,top=3cm,bottom=3cm}

\DeclareSymbolFont{largesymbol}{OMX}{yhex}{m}{n}
\DeclareMathAccent{\Widehat}{\mathord}{largesymbol}{"62}
\newcommand*\di{\mathop{}\!\mathrm{d}}

\def\e{\epsilon}

\numberwithin{equation}{section}              % numerazionedelleequazioni
\newtheorem{theorem}{Theorem}[section]

\newtheorem{lemma}{Lemma}[section]
\newtheorem{proposition}{Proposition}[section]
\newtheorem*{proposition*}{Proposition}
\newtheorem{corollary}{Corollary}[section]
\newtheorem*{corollary*}{Corollary}
\newtheorem{definition}{Definition}[section]
\newtheorem*{definitions*}{Definitions}

\newtheorem*{acknowledgements*}{Acknowledgements}

\newtheorem*{conjecture*}{\bf Conjecture}

\newtheorem*{example*}{\bf Example}
\theoremstyle{remark}
\newtheorem{remark}{\bf Remark}[section]

\begin{document}
\date{}                                     %TITOLO\date\today
%Data

\author{Cong Wang}
\address[C. Wang]{Department of Mathematics, Harbin Institute of Technology,  Harbin,	150001, P.R. China.}
\email{math\_congwang@163.com}

\author{Yu Gao}
\address[Y. Gao]{Department of Applied Mathematics, The Hong Kong Polytechnic University, Hung Hom, Kowloon, Hong Kong}
\email{mathyu.gao@polyu.edu.hk}

\author{Xiaoping Xue}
\address[X. Xue]{Department of Mathematics, Harbin Institute of Technology, Harbin,	150001, P.R. China.}
\email{xiaopingxue@hit.edu.cn}

\title[Space-time analyticity for Keller-Segel equations]{Optimal decay rates  and space-time analyticity  of  solutions to the Patlak-Keller-Segel equations}

\begin{abstract}
Based on some elementary estimates for the space-time derivatives of the heat kernel,  we use a bootstrapping approach to establish the optimal decay rates for the  $L^q(\mathbb{R}^d)$ ($1\leq q\leq\infty$, $d\in\mathbb{N}$) norm of the space-time derivatives  of solutions to the (modified) Patlak-Keller-Segel equations  with initial data in  $L^1(\mathbb{R}^d)$, which implies the joint space-time analyticity of solutions. When the $L^1(\mathbb{R}^d)$ norm of the initial datum is small, the upper bound for the decay estimates  is global in time, which yields a lower bound on the growth rate of the radius of space-time analyticity in time. As a byproduct, the space analyticity is obtained for any initial data in $L^1(\mathbb{R}^d)$. The decay estimates and space-time analyticity are also established for solutions bounded in both space and time variables.  The results can be extended to a more general class of equations, including the Navier-Stokes equations.
\end{abstract}

\maketitle
{
\hypersetup{linkcolor=blue}
\tableofcontents
}

\section{Introduction}
In this paper, we are going to study the following system of  partial differential equations:
\begin{equation}\label{eq:KS}
\left\{
\begin{aligned}
&\rho_t+\nabla\cdot(\rho\nabla c)=\Delta\rho,\quad x\in\mathbb{R}^d,~~t>0,\\
&c(x,t)=-\frac{1}{d\pi}\int_{\mathbb{R}^d}\ln|x-y|\rho(y,t)\di y.
\end{aligned}\right.
\end{equation} 
When $d=2$, this system is known as the Patlak-Keller-Segel (PKS) model, which is used to describe the collective motion of cells (bacteria or amoebae) attracted by a chemical substance \cite{keller1971model,patlak1953random}.
For $d\in\mathbb{N}$,  system \eqref{eq:KS} is referred to  as the modified Patlak-Keller-Segel (mPKS) equations; see \cite{calvez2006modified,blanchet2008convergence}.

We are going to focus on  the decay rates for the $L^q(\mathbb{R}^d)$ ($1\leq q\leq\infty$) norm of space-time derivatives and the joint space-time analyticity of the mild solutions to the mPKS equations \eqref{eq:KS} with the initial data in  $L^1(\mathbb{R}^d)$. Let $G$ be the heat kernel in $\mathbb{R}^d$ given by
\begin{align*}
G(x,t)=\frac{1}{(4\pi t)^\frac{d}{2}}e^{-\frac{|x|^2}{4t}},\quad x\in\mathbb{R}^d,~~t>0.
\end{align*}
Then, a mild solution $\rho(t)$ of the mPKS equations \eqref{eq:KS} satisfies (see Definition \ref{def:mild})
\begin{align}\label{eq:mildsolution}
\rho(t)=G(\cdot,t)\ast \rho_0-\int_0^tG(\cdot,t-s)\ast \nabla\cdot\big(\rho(s)\nabla c(s)\big)\di s,
\end{align}
where $\rho_0\in L^1(\mathbb{R}^d)$ is the initial datum.
The main results in this paper are as follows: 
\begin{theorem}\label{thm1}
Let $0\leq\rho_0\in L^{1}(\mathbb{R}^d)$, $\beta\in\mathbb{N}^d$ be a multi-index and $k\in\mathbb{N}$ be a nonnegative integer. 
Then, the following two statements hold:
\begin{enumerate}
\item[(i)] If $\|\rho_0\|_{L^1}$ is small enough, then the mild solution $\rho(t)$ to the mPKS equations  \eqref{eq:KS} exists globally, and there exists a constant $M$ independent of time $t$, $\beta$ and $k$ such that the inequality
\begin{align}\label{eq:analytic2}
\left\|D_x^\beta\partial_t^k\rho(t)\right\|_{L^q(\mathbb{R}^d)}\leq M^{|\beta|+k}\left(|\beta|+k\right)^{|\beta|+k}t^{-\frac{|\beta|}{2}-k-\frac{d}{2}(1-\frac{1}{q})}
\end{align}
holds for $1\leq q\leq\infty$, $ t\in(0,\infty)$, and $|\beta|+k>0$.
\item[(ii)] If $\rho(t)$ is a mild solution to  the mPKS equations \eqref{eq:KS} for $t\in[0,T]$ satisfying
\begin{align}\label{eq:condition}
|\rho(x,t)|\leq C,\quad (x,t)\in\mathbb{R}^d\times[0,T]
\end{align}
for some constant $C$, then there exists a constant $M$ depending on $T$, but independent of $\beta$ and $k$, such that 
\eqref{eq:analytic2} holds for $1\leq q\leq\infty$, $ t\in(0,T]$, and $|\beta|+k>0$.
\end{enumerate}
\end{theorem}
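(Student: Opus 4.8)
The plan is to establish the derivative bounds \eqref{eq:analytic2} by a bootstrapping/induction argument on $N := |\beta| + k$, using the mild formulation \eqref{eq:mildsolution} together with sharp $L^p$–$L^q$ estimates for the space-time derivatives of the heat kernel $G$. The base case $N = 0$ is the well-known decay estimate for small-data (resp.\ bounded) mild solutions of the mPKS system: $\|\rho(t)\|_{L^q} \lesssim t^{-\frac d2(1-\frac1q)}$, which under the smallness hypothesis in (i) holds globally, and under \eqref{eq:condition} holds on $(0,T]$ after interpolating the uniform bound with the $L^1$ bound. I would first record the elementary kernel estimates the abstract alludes to: for a multi-index $\alpha$ and $j \in \mathbb{N}$, $\|D_x^\alpha \partial_t^j G(\cdot,t)\|_{L^r(\mathbb{R}^d)} \le C_0^{|\alpha|+2j}(|\alpha|+j)^{\frac{|\alpha|}{2}+j}\, t^{-\frac{|\alpha|}{2}-j-\frac d2(1-\frac1r)}$, with a clean constant; the factorial-type growth $(|\alpha|+j)^{|\alpha|/2+j}$ is exactly what produces the analyticity-type bound $(|\beta|+k)^{|\beta|+k}$ after the Duhamel integration, and getting the combinatorial bookkeeping right here is the crux.

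Next I would differentiate \eqref{eq:mildsolution}. For the linear term $D_x^\beta\partial_t^k\big(G(\cdot,t)\ast\rho_0\big)$, move all derivatives onto the kernel and use the $L^1 \to L^q$ kernel estimate against $\|\rho_0\|_{L^1}$; this already has the right form. For the Duhamel term one must handle the time derivatives $\partial_t^k$ acting on $\int_0^t G(\cdot,t-s)\ast F(s)\,ds$ where $F = \nabla\cdot(\rho\nabla c)$: differentiating in $t$ produces boundary terms from the lower limit and lower-order time derivatives of $F$, which I would organize via the identity $\partial_t \int_0^t G(t-s)\ast F(s)\,ds = F(t) + \int_0^t \partial_t G(t-s)\ast F(s)\,ds$ iterated $k$ times (and splitting the integral at $t/2$, putting the singular half of the kernel's time-derivative weight on the part of $F$ evaluated near $t$, and vice versa). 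The nonlinearity must be expressed through lower-order derivatives of $\rho$: since $\nabla c = -\frac{1}{d\pi}\,x/|x|^d \ast \rho$ (the Newtonian-type kernel, bounded $L^p \to L^q$ by Hardy–Littlewood–Sobolev), one has $\|D_x^\gamma\partial_t^i \nabla c(t)\|_{L^q} \lesssim \|D_x^\gamma\partial_t^i \rho(t)\|_{L^p}$ for suitable exponents, so $F$ and all its derivatives are controlled by the inductively-bounded quantities $\|D_x^{\beta'}\partial_t^{k'}\rho\|_{L^r}$ with $|\beta'|+k'<N$, via the Leibniz rule applied to $\rho\,\nabla c$.

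The heart of the induction is then a Grönwall-free estimate: assuming \eqref{eq:analytic2} for all orders $<N$ with constant $M$, one writes $\|D_x^\beta\partial_t^k\rho(t)\|_{L^q}$ as a sum over the Leibniz splitting and the Duhamel time-derivative expansion, bounds each term by a product of two factors of the form $M^{\bullet}(\bullet)^{\bullet}(\cdot)^{-\bullet}$, integrates the resulting power of $(t-s)$ and $s$ (a Beta-function integral, finite precisely because the base decay rate $\frac d2(1-\frac1q)$ and the half-integer weights line up), and checks that the combinatorial sum $\sum \binom{\beta}{\beta'}\cdots$ times the factorial factors can be absorbed into $M^N N^N$ provided $M$ is chosen large enough (depending on $C_0$, $d$, the base-case constant, and in case (ii) on $T$). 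The main obstacle I anticipate is precisely this last accounting step: one must verify that the constants do not degrade as $N$ grows — i.e.\ that the number of terms (polynomial/exponential in $N$), the binomial coefficients, and the kernel constants $C_0^{|\alpha|+2j}$ together contribute at most a geometric factor $M^{N}$ and a factor $N^N$, using elementary inequalities such as $\sum_{j}\binom{k}{j} j^j (k-j)^{k-j}\le C^k k^k$ and $\sum_{\beta'\le\beta}\binom{\beta}{\beta'}\le 2^{|\beta|}$. In case (ii), the extra subtlety is that the base decay only holds on the finite interval $(0,T]$, so the Beta-integrals must be cut off at $T$ and the resulting $T$-dependent constants folded into $M$; the time-analyticity radius statement then follows from \eqref{eq:analytic2} by the standard power-series criterion, with the radius growing like $t$ (resp.\ like $\min(t,T)$).
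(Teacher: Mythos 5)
Your overall scaffolding (induction on $N=|\beta|+k$, Leibniz rule, HLS for $\nabla c$, Beta-function integrals, combinatorics of Stirling type) matches the broad architecture of the paper, but there are two genuine gaps in the middle that, as written, prevent the induction from closing.

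First, you propose to differentiate the Duhamel term by iterating $\partial_t\int_0^t G(t-s)\ast F(s)\,ds=F(t)+\int_0^t\partial_t G(t-s)\ast F(s)\,ds$ $k$ times. The boundary terms this produces are of the form $D_x^\beta\Delta^{k-1-j}\nabla\cdot\partial_t^j(\rho\nabla c)(t)$ for $0\le j\le k-1$; after Leibniz, the worst factor is a derivative of $\rho$ of total order $|\beta|+2(k-1-j)+1+j=|\beta|+2k-1-j$, which for small $j$ is strictly larger than $N=|\beta|+k$. These terms are therefore \emph{not} available from the inductive hypothesis, and the single-parameter induction on $N$ breaks. (The paper's Theorem~\ref{thm:regularity} does use this iteration, but there constants may depend on $\beta,k$ and the induction is two-stage: all spatial orders first, then time; that strategy cannot give uniform-in-$N$ constants.) If you instead push all time derivatives onto the kernel you trade this problem for a singularity $(t-s)^{-k-\cdots}$ which is non-integrable for $k\geq1$; splitting at $t/2$ only handles the half away from $s\approx t$. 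The paper's actual fix is the Dong--Zhang device of estimating $\partial_t^k(t^k\rho(t))$ rather than $\partial_t^k\rho(t)$, combined with the algebraic identity $t^k=\sum_j\binom{k}{j}s^{k-j}(t-s)^j$ to distribute the $t^k$ between kernel and nonlinearity, and Proposition~\ref{pro:heatkernelestimate2} which shows $\partial_t^j[(t-s)^j\nabla G(\cdot,t-s)]$ has the \emph{same} $(t-s)^{-\frac12-\frac d2(1-\frac1a)}$ singularity for every $j$. That is the ingredient your plan is missing.

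Second, the combinatorial inequality you invoke, $\sum_j\binom{k}{j}j^j(k-j)^{k-j}\le C^kk^k$, is false: by Stirling, $\binom{k}{j}j^j(k-j)^{k-j}\sim k^{k+1/2}/\sqrt{2\pi j(k-j)}$, so the sum behaves like $k^{k+1/2}$, which cannot be absorbed into a geometric factor times $k^k$. Kahane's Lemma~\ref{lmm:multisequences} controls $\sum\binom{k}{j}j^{j+\delta}(k-j)^{k-j+\epsilon}$ only when $\delta$ or $\epsilon<-\tfrac12$. For exactly this reason the paper proves the slightly weaker intermediate estimate \eqref{eq:Linduction} with exponent $(|\beta|+k)^{|\beta|+k-1}$ (so $\delta=\epsilon=-1$ applies) and then upgrades to the stated $(|\beta|+k)^{|\beta|+k}$ in the Claim inside Theorem~\ref{thm:analytic} by trading a factor $(1+1/M)^k$. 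You would need the same device. You also gloss over the $q$-bootstrap (first $1<q<\tfrac{d}{d-1}$, then successively up to $q=\infty$) that the paper needs because HLS restricts the admissible exponents; this matters because the absorption constant $\theta$ must be controlled uniformly across those steps.
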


As it is known, solutions to some dissipative equations, such as the  Navier-Stokes equations, the surface quasi-geostrophic equation, and the  dissipative Burgers equation are analytic in both space and time variables; see \cite{masuda1967on,kahane1969,foias1989gevrey,giga1983time,dong2008spatial,donghongjie2020jfa,kiselev2008blow} and references therein. When consider the equations in the whole space $\mathbb{R}^d$, most existing results for the analyticity are either about space analyticity or about time analyticity. For a function of real variables, combining its space analyticity and time analyticity does not imply the joint space-time analyticity. (From the Hartogs's theorem or Osgood's lemma, if a function with several complex variables is analytic with respect each variable, then it is analytic with respect to all variables. However, for functions with real variables, we do not have such a good property.)
As far as the authors know, there is no existing result similar to \eqref{eq:analytic2} for the explicit decay estimates of the space-time derivatives of solutions to similar dissipative equations (e.g., the Navier-Stokes equations) in the whole space $\mathbb{R}^d$, which yields the joint space-time analyticity.
Moreover, the decay rate in \eqref{eq:analytic2} is optimal  in the sense that it coincides with the following decay rate for solutions to the heat equation in $\mathbb{R}^d$ (see \eqref{eq:estimate7}):
\begin{align}\label{eq:heatkernel}
\left\|D_x^\beta\partial_t^kG(\cdot,t)\ast f\right\|_{L^q}\leq C^{\frac{|\beta|}{2}+k}\left(|\beta|+k\right)^{\frac{|\beta|}{2}+k+\frac{d}{2}(1-\frac{1}{q})}t^{-\frac{|\beta|}{2}-k-\frac{d}{2}(1-\frac{1}{q})}\|f\|_{L^1},
\end{align}
where $f\in L^1(\mathbb{R}^d)$, $1\leq q\leq\infty$, $\beta\in\mathbb{N}^d$, $k\in\mathbb{N}$, and $C$ is a constant independent of $\beta,k$ and $t$. Comparing with \eqref{eq:heatkernel}, the upper bound $\left(|\beta|+k\right)^{|\beta|+k}$ in \eqref{eq:analytic2}  has some room for improvement.

Instead of proving \eqref{eq:analytic2} directly, we will apply a technique from a recent paper \cite{donghongjie2020jfa}, and change the position of the decay rate $t^{-k}$  (corresponding to time derivatives)  from the right hand side of  \eqref{eq:analytic2} to the left to show
\begin{align}\label{eq:analytic3}
\left\|D_x^\beta\partial_t^k\left(t^k\rho(t)\right)\right\|_{L^{q}}\leq \hat{M}^{|\beta|+k}(|\beta|+k)^{|\beta|+k}t^{-\frac{|\beta|}{2}-\frac{d}{2}(1-\frac{1}{q})}
\end{align}
for $1\leq q\leq\infty$, $|\beta|+k>0$, and some constant $\hat{M}>0$ independent of $\beta$ and $k$.
The above inequality \eqref{eq:analytic3} implies the main result \eqref{eq:analytic2} immediately with $M=\hat{M}+1$ (see the proof of Theorem \ref{thm:analytic}).  The proof of \eqref{eq:analytic3} is based on some new elementary estimates for the space-time derivatives of the heat kernel (see Proposition \ref{pro:heatkernelestimate2}). With these  estimates, we will use an induction for $|\beta|+k$ to study the decay rates of the space-time derivatives of mild solutions to the mPKS equations \eqref{eq:KS}. 
For the case $1<q<\infty$ of \eqref{eq:analytic3}, a bootstrapping approach will be used. More precisely, we will first prove \eqref{eq:analytic3} for $1<q<\frac{d}{d-1}$, and then use the obtained results  to prove the case for $\frac{d}{d-1}\leq q<\frac{d}{d-2}$. Proceed similarly and use the results for $1<q<\frac{d}{d-(k-1)}$ to obtain the  estimate for $\frac{d}{d-(k-1)}\leq q<\frac{d}{d-k}$, $k=2,~3,\cdots, d$.
After $d$ steps, we  will complete the proof of \eqref{eq:analytic3} for $1<q<\infty$.  At last, the estimates \eqref{eq:analytic3} for $q=\infty$ and $q=1$ will be obtained by using the results for $1< q<\infty$. This kind of bootstrapping arguments are mainly due to the restriction brought by the Hardy-Littlewood-Sobolev inequality; see the details in the proofs of Theorem \ref{thm:regularity1} and Theorem \ref{thm:analytic}.
The bootstrapping arguments allow us to avoid the commonly used contraction argument for local smoothing estimates (or hypercontractivity) of solutions to similar dissipative equations; see \cite{bedrossian2014existence,dong2008spatial,giga2002regularizing,sawada2005analyticity}.

In \cite{donghongjie2020jfa}, Dong and Zhang obtained a similar result as  \eqref{eq:analytic3} (with $\beta=0$ and $q=\infty$) to show the time analyticity of solutions to the Navier-Stokes equations in $\mathbb{R}^d$.
For a solution $u$ to the Navier-Stokes equations, under the boundedness condition  $|u(x,t)|\leq C$ for $(x,t)\in\mathbb{R}^d\times[0,1]$, they obtained   (see \cite[Theorem 3.1]{donghongjie2020jfa}):
\[
\sup_{t\in (0,1]}t^k\|\partial_t^ku(\cdot,t)\|_{L^\infty}\leq N^kk^k
\] 
for any $k\in\mathbb{N}$ and some large constant $N$ independent of $k$. Notice that the above inequality was proved in the time interval $[0,1]$. From the proof of \cite[Theorem 3.1]{donghongjie2020jfa}, the constant $N$ actually depends on time $T$ if we consider the same estimate in the time interval $[0,T]$. Therefore, the growth rate of the radius of analyticity in time is not clear. Comparing with the above estimate, we are able to obtain inequality \eqref{eq:analytic3} for solutions to the mPKS equations \eqref{eq:KS} with the same boundedness condition  \eqref{eq:condition} (see Corollary \ref{cor:allinitial}), which implies the space-time analyticity.  
Moreover, for the small initial data, there is no need to assume the boundedness condition \eqref{eq:condition} to obtain space-time analyticity estimate \eqref{eq:analytic2}, and the constant $M$ in \eqref{eq:analytic2} is independent of time $t$. Hence, we can derive from \eqref{eq:analytic2}  that the radius of space-time analyticity $r(t)$ satisfies $r(t)\geq C\min\{\sqrt{t},t\}$ for any $t>0$. For the radius of space analyticity $r_1(t)$, we have $r_1(t)\geq C\sqrt{t}$, and for the radius of time analyticity $r_2(t)$, we have $r_2(t)\geq Ct$ for any $t>0$ (see Remark \ref{rmk:radius}). Note that there is no need to prove \eqref{eq:analytic3} to obtain the following Gevrey type estimate
\begin{align*}
\left\|D_x^\beta\partial_t^k\rho(t)\right\|_{L^q(\mathbb{R}^d)}\leq M^{|\beta|+2k}\left(|\beta|+2k\right)^{|\beta|+2k}t^{-\frac{|\beta|}{2}-k-\frac{d}{2}(1-\frac{1}{q})}.
\end{align*}
Since the time derivative $\partial_t^k$ ($k$ times) of the heat kernel corresponds to space derivatives $\Delta^k$ ($2k$ times), change time derivatives into space derivatives in \eqref{eq:analytic2} and then the above Gevrey type estimate follows. The same idea will be used to obtain the decay estimates for the $L^q(\mathbb{R}^d)$ ($1\leq q \leq\infty$) norm of space-time derivatives of mild solutions to the mPKS equations \eqref{eq:KS} (see Theorem \ref{thm:regularity}).  Similar Gevrey type estimates were obtained  in \cite{dong2008spatial} for the subcritical quasi-geostrophic equations.  However, the above Gevrey type estimate does not imply space-time analyticity.

The results in this paper are obtained by direct calculations about the space-time derivatives of mild solutions to the mPKS equations \eqref{eq:KS}, and some ideas for the proof of Theorem \ref{thm1} are from \cite{giga2002regularizing}, where Giga and Sawada obtained the space analyticity and decay estimates for the space derivatives of solutions to the Navier-Stokes equations. See  \cite{sawada2005analyticity,sawada2008spatial} for similar applications.
For analyticity, another frequently used method based on Fourier analysis was provided by Foias and Teman \cite{foias1989gevrey} to show the time analyticity of solutions to the Navier-Stokes with values in a Gevrey class of functions (for the space variable). For the generalizations of their methods, see \cite{ferrari1998gevrey,biswas2014maximal,kukavica2009radius,lemarie2000,oliver2000remark} and references therein. There are also lots of studies for analyticity of solutions to some equations with fractional dissipation  \cite{dong2008spatial,dong2009optimal,li2011one,gao2020global}, and equations in domains with boundaries \cite{masuda1967on,kinderlehrer1978analyticity,komatsu1980global,giga1983time,camliyurt2018gevrey,camliyurt2020analyticity}.

For the well-posedness results of the PKS equations \eqref{eq:KS} (i.e., $d=2$), one can refer to \cite{blanchet2006two,blanchet2008infinite,blanchet2012functional,bedrossian2014existence,wei2018global}. See \cite{calvez2006modified,blanchet2008convergence} for the well-posedness of the mPKS equations \eqref{eq:KS} for any dimensional $d\in\mathbb{N}$.
If $\rho(x,t)$ is a solution to the mPKS equations \eqref{eq:KS}, then $\rho_\lambda$ given by  $\rho_\lambda(x,t)=\lambda^d\rho(\lambda x,\lambda^2 t)$ is also a solution. Moreover, the $L^1(\mathbb{R}^d)$ norm is invariant under the above scaling. 
When $d=2$, it is well known that there exists a unique global solution of the PKS equations when the initial datum satisfies $\|\rho_0\|_{L^1}\leq 8\pi$ and the solution will blow up in finite time if $\|\rho_0\|_{L^1}>8\pi$; see \cite{jager1992explosions,blanchet2006two,dolbeault2009two}. When come to global well-posedness, most of the results were obtained under the assumption $\int_{\mathbb{R}^2}\left(|x|^2+\log{\rho_0}\right)\rho_0<+\infty$ for the nonnegative initial datum $\rho_0$, and with this assumption, one can take advantage of the free energy of the PKS equations. In \cite{wei2018global},  Wei removed the above assumption and used some monotonicity formulas of nonnegative mild solutions to obtain a global solution to the PKS equations for any $L^1(\mathbb{R}^2)$ initial datum $\rho_0$ satisfying $\|\rho_0\|_{L^1}\leq 8\pi$. His method can be applied to the mPKS equations \eqref{eq:KS} for any dimension  $d\in\mathbb{N}$ and obtain a global solution when $\|\rho_0\|_{L^1}\leq 2d^2\pi$.

The rest of this paper is organized as follows. In Section \ref{sec:pre}, we will show some elementary space-time estimates for the heat kernel. In Section \ref{sec:local}, we will prove local existence and uniqueness of mild solutions for any $L^1(\mathbb{R})$ initial data. When the mass of initial datum is small enough, we will obtain a global mild solution. The instantaneous regularization and decay rates of the space-time derivatives will also be obtained in this section. The main results of Theorem \ref{thm1} will be proved in Section \ref{sec:analytic}.

\section{Properties of the heat kernel}\label{sec:pre}
In this section, we will provide some estimates for the heat kernel.
We have the following estimates for spatial derivatives (see \cite{carrillo2008asymptotic,dong2008spatial,gao2020global} for some similar results):
\begin{lemma}\label{lmm:heatkernelestimate1}
Let $f\in L^p(\mathbb{R}^d)$ for some $p\geq 1$. Assume $0\neq \beta\in\mathbb{N}^d$ is a multi-index. Then for $1\leq q\leq \infty$, we have
\begin{equation}
\begin{aligned}\label{eq:estimate1}
\left\|D_x^{\beta}G(\cdot,t)\right\|_{L^q}\leq C_{0}^{\frac{|\beta|}{2}}\left(\frac{|\beta|}{2}\right)^{\frac{|\beta|}{2}+\frac{d}{2}(1-\frac{1}{q})}t^{-\frac{|\beta|}{2}-\frac{d}{2}(1-\frac{1}{q})},
\end{aligned}
\end{equation}
\begin{equation}
\begin{aligned}\label{eq:estimate2}
\left\|D_x^{\beta}\nabla G(\cdot,t)\right\|_{L^q}\leq C_{0}^{\frac{|\beta|}{2}}\left(\frac{|\beta|}{2}\right)^{\frac{|\beta|+1}{2}+\frac{d}{2}(1-\frac{1}{q})}t^{-\frac{|\beta|+1}{2}-\frac{d}{2}(1-\frac{1}{q})},
\end{aligned}
\end{equation}
and for $1\leq p\leq q\leq\infty$, we have
\begin{equation}
\begin{aligned}\label{eq:estimate3}
\left\|\left[D_x^{\beta}G(\cdot,t)\right]\ast f\right\|_{L^q}\leq C_{0}^{\frac{|\beta|}{2}}\left(\frac{|\beta|}{2}\right)^{\frac{|\beta|}{2}+\frac{d}{2}(\frac{1}{p}-\frac{1}{q})}t^{-\frac{|\beta|}{2}-\frac{d}{2}(\frac{1}{p}-\frac{1}{q})}\|f\|_{L^p},
\end{aligned}
\end{equation}
\begin{equation}
\begin{aligned}\label{eq:estimate4}
\left\|\left[D_x^{\beta}\nabla G(\cdot,t)\right]\ast f\right\|_{L^q}\leq C_{0}^{\frac{|\beta|}{2}}\left(\frac{|\beta|}{2}\right)^{\frac{|\beta|+1}{2}+\frac{d}{2}(\frac{1}{p}-\frac{1}{q})}t^{-\frac{|\beta|+1}{2}-\frac{d}{2}(\frac{1}{p}-\frac{1}{q})}\|f\|_{L^p}.
\end{aligned}
\end{equation}
Moreover,
\begin{align}\label{eq:key}
\lim_{t\to0}t^{\frac{d}{2}(\frac{1}{p}-\frac{1}{q})}\|G(\cdot,t)\ast f\|_{L^q}=0,\quad \forall ~q>p.
\end{align}
In particularly, for $p=1$ and $q=\frac{2d}{2d-1}$, we obtain from \eqref{eq:key} that
\begin{align}\label{eq:key1}
	\lim_{t\to0}t^{\frac{1}{4}}\|G(\cdot,t)\ast f\|_{L^{\frac{2d}{2d-1}}}=0.
\end{align}
\end{lemma}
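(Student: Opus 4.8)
The plan is to reduce all four heat-kernel estimates to the single case $t=1$ using the self-similar scaling of $G$, then to bound the spatial derivatives of $G(\cdot,1)$ pointwise via Cauchy's integral formula applied to a holomorphic extension, and finally to get the convolution estimates \eqref{eq:estimate3}--\eqref{eq:estimate4} from Young's inequality and the limits \eqref{eq:key}--\eqref{eq:key1} by a density argument. Concretely, from $G(x,t)=t^{-d/2}G(xt^{-1/2},1)$ one has $D_x^\beta G(x,t)=t^{-d/2-|\beta|/2}(D^\beta G)(xt^{-1/2},1)$, hence, changing variables in the $L^q$-integral,
\begin{align*}
\|D_x^\beta G(\cdot,t)\|_{L^q}=t^{-\frac{|\beta|}{2}-\frac d2(1-\frac1q)}\,\|D^\beta G(\cdot,1)\|_{L^q},
\end{align*}
and similarly for $D_x^\beta\nabla G$ with an extra $t^{-1/2}$. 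This already gives the correct powers of $t$ in \eqref{eq:estimate1}--\eqref{eq:estimate2}, so it remains to bound $\|D^\beta G(\cdot,1)\|_{L^q}$ by $C(d)^{|\beta|}(|\beta|/2)^{|\beta|/2}$ (and the $\nabla$-version by the same with $|\beta|$ replaced by $|\beta|+1$); the polynomial factor $(|\beta|/2)^{\frac d2(1-1/q)}$ on the right of \eqref{eq:estimate1}--\eqref{eq:estimate2} provides the slack needed to absorb dimensional losses and to fix $C_0$.

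For the pointwise bound I would use that $z\mapsto(4\pi)^{-d/2}e^{-(z_1^2+\cdots+z_d^2)/4}$ is entire on $\mathbb{C}^d$ and restricts to $G(\cdot,1)$. Fixing $x$ and applying the one-variable Cauchy formula on the circle $|z_j-x_j|=\rho_j$ in each direction with $\beta_j\ge1$, I would use $\mathrm{Re}(z_j^2)\ge x_j^2-2|x_j|\rho_j-\rho_j^2$ on that circle to get $|e^{-z_j^2/4}|\le e^{-x_j^2/4+|x_j|\rho_j/2+\rho_j^2/4}$, choose the $x$-independent radii $\rho_j=\sqrt{2\beta_j}$, and complete the square so the exponent becomes $\beta_j-\tfrac14(|x_j|-\sqrt{2\beta_j})^2$; the Cauchy kernel then contributes $\beta!\prod_j\rho_j^{-\beta_j}=\beta!\,2^{-|\beta|/2}\prod_j\beta_j^{-\beta_j/2}\le(|\beta|/2)^{|\beta|/2}$ (from $\beta_j!\le\beta_j^{\beta_j}$ and $\sum_j\beta_j\ln(\beta_j/2)\le|\beta|\ln(|\beta|/2)$), yielding
\begin{align*}
|D^\beta G(x,1)|\le(4\pi)^{-d/2}\,e^{|\beta|}\,(|\beta|/2)^{|\beta|/2}\prod_{j=1}^d e^{-(|x_j|-\sqrt{2\beta_j})^2/4}.
\end{align*}
Taking the supremum in $x$ (the Gaussian product is $\le1$) gives the $L^\infty$-bound, integrating gives the $L^1$-bound (each one-dimensional integral is $\le4\sqrt\pi$ regardless of the shift), and interpolating between $L^1$ and $L^\infty$ yields \eqref{eq:estimate1} after enlarging $C_0$; running the same computation with $\partial_i G(\cdot,1)$ in place of $G(\cdot,1)$ gives \eqref{eq:estimate2}.

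The remaining items are soft. For \eqref{eq:estimate3}--\eqref{eq:estimate4} I would apply Young's inequality $\|g\ast f\|_{L^q}\le\|g\|_{L^r}\|f\|_{L^p}$ with $\tfrac1r=1+\tfrac1q-\tfrac1p$ (so $r\ge1$ since $p\le q$, and $1-\tfrac1r=\tfrac1p-\tfrac1q$) to $g=D_x^\beta G(\cdot,t)$ and $g=D_x^\beta\nabla G(\cdot,t)$, invoking \eqref{eq:estimate1}--\eqref{eq:estimate2} with exponent $r$. For \eqref{eq:key}, since $\|G(\cdot,t)\|_{L^1}=1$, on the dense subspace $L^p\cap L^q\subset L^p$ (e.g.\ $C_c^\infty(\mathbb{R}^d)$) we have $t^{\frac d2(\frac1p-\frac1q)}\|G(\cdot,t)\ast g\|_{L^q}\le t^{\frac d2(\frac1p-\frac1q)}\|g\|_{L^q}\to0$ as $t\to0$ because $q>p$ makes the exponent positive; for general $f\in L^p$, approximating by such a $g$ and combining Young with $\|G(\cdot,t)\|_{L^r}=C(d)t^{-\frac d2(\frac1p-\frac1q)}$ (by scaling, same $r$) gives $\limsup_{t\to0}t^{\frac d2(\frac1p-\frac1q)}\|G(\cdot,t)\ast f\|_{L^q}\le C(d)\|f-g\|_{L^p}$, which is arbitrarily small; \eqref{eq:key1} is then the case $p=1$, $q=\tfrac{2d}{2d-1}$, for which $\tfrac d2(\tfrac1p-\tfrac1q)=\tfrac14$.

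The only step that needs genuine care is the Cauchy estimate in the second paragraph: one must commit to the single, $x$-independent choice $\rho_j=\sqrt{2\beta_j}$ (the Gaussian decay in $x$ compensates for its non-optimality when $|x_j|$ is large) and bookkeep the constants $4\pi$, $e$, $2$ carefully so that everything collapses into one constant $C_0$ of the stated form; the remaining steps are routine uses of scaling, Young's inequality, and approximation.
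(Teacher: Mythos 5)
Your proof is correct, and for the core estimates \eqref{eq:estimate1}--\eqref{eq:estimate2} it takes a genuinely different route from the paper, so let me compare the two. Both arguments start with the same self-similar scaling reduction to $t=1$; the divergence is in how $\|D^\beta G(\cdot,1)\|_{L^q}$ is controlled. The paper uses the semigroup property of $G$: via the Fourier transform, $G(\cdot,1)$ is the $|\beta|$-fold convolution of $G(\cdot,1/|\beta|)$ with itself, so one can distribute one first-order derivative per factor, apply Young's inequality iteratively, and invoke the scaling identities \eqref{eq:e3}--\eqref{eq:e4} on each one-derivative factor. You instead obtain a pointwise bound on $D^\beta G(x,1)$ via Cauchy's formula applied componentwise to the entire extension of $e^{-z^2/4}$, with the $x$-independent radii $\rho_j=\sqrt{2\beta_j}$; the bookkeeping of $\beta!\prod_j\rho_j^{-\beta_j}\le(|\beta|/2)^{|\beta|/2}$ is correct (using $\beta_j!\le\beta_j^{\beta_j}$ and $\beta_j\le|\beta|$), and the shifted Gaussians give the $L^1$ and $L^\infty$ bounds, whence $L^q$ by interpolation. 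Both approaches land on the same estimate up to an adjustment of $C_0$ (your version produces an extra $e^{|\beta|}$ and a $(4\sqrt\pi)^{d/q}$, both absorbable into $C_0^{|\beta|/2}$). The paper's semigroup argument is shorter and leans only on Young's inequality, while your Cauchy argument is more explicit, yields pointwise Gaussian-type bounds as a byproduct, and extends to other kernels with holomorphic continuations; the paper's route by contrast generalizes more naturally to kernels that form a convolution semigroup but may lack analytic continuation. The remaining items are handled identically to the paper: \eqref{eq:estimate3}--\eqref{eq:estimate4} by Young's inequality with $1+\tfrac1q=\tfrac1r+\tfrac1p$, and \eqref{eq:key}--\eqref{eq:key1} by the density argument (approximating $f\in L^p$ by $g\in L^p\cap L^q$ and using Young with $\|G(\cdot,t)\|_{L^1}=1$). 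One small point you gloss over: for \eqref{eq:estimate2} your phrase ``the same computation with $\partial_i G$'' needs the linear factor $-z_i/2$ (or equivalently an application to $D^{\beta+e_i}G$) to be tracked, and this is precisely where the extra half-power $(|\beta|/2)^{1/2}$ in \eqref{eq:estimate2} comes from; the argument still closes, but it is worth making explicit.
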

\begin{proof}
Let $y=xt^{-\frac{1}{2}}$. From the definition of the heat kernel, we have 
\begin{align*}
G(x,t)=t^{-\frac{d}{2}}G(y,1),\quad\nabla G(x,t)=t^{-\frac{d+1}{2}}\nabla G(y,1)
\end{align*}
and 
\begin{align*}
D_x^\beta G(x,t)=t^{-\frac{|\beta|}{2}-\frac{d}{2}}D_y^\beta G(y,1),\quad 
D_x^\beta\nabla G(x,t)=t^{-\frac{|\beta|+1}{2}-\frac{d}{2}}D_y^\beta\nabla G(y,1).
\end{align*}
Hence, for $1\leq q\leq\infty$, we have
\begin{align}\label{eq:e1}
\left\|D_x^\beta G(\cdot,t)\right\|_{L^q}=t^{-\frac{|\beta|}{2}-\frac{d}{2}\left(1-\frac{1}{q}\right)}\left\|D_y^\beta G(\cdot,1)\right\|_{L^q}
\end{align}
and 
\begin{align}\label{eq:e2}
\left\|D_x^\beta\nabla G(\cdot,t)\right\|_{L^q}=t^{-\frac{|\beta|+1}{2}-\frac{d}{2}\left(1-\frac{1}{q}\right)}\left\|D_y^\beta\nabla G(\cdot,1)\right\|_{L^q},
\end{align}
which imply
\begin{align}\label{eq:e3}
\left\|D_x^{\beta^{(i)}}G\left(\cdot,\frac{1}{|\beta|}\right)\right\|_{L^q}=\left(\frac{1}{|\beta|}\right)^{-\frac{1}{2}-\frac{d}{2}\left(1-\frac{1}{q}\right)}\left\|D_y^{\beta^{(i)}}G(\cdot,1)\right\|_{L^q}
\end{align}
and 
\begin{align}\label{eq:e4}
\left\|D_x^{\beta^{(i)}}\nabla G\left(\cdot,\frac{1}{|\beta|}\right)\right\|_{L^q}=\left(\frac{1}{|\beta|}\right)^{-1-\frac{d}{2}\left(1-\frac{1}{q}\right)}\left\|D_y^{\beta^{(i)}}\nabla G(\cdot,1)\right\|_{L^q},
\end{align}
where $\beta^{(i)}$ $(i=1,2,\cdots,|\beta|)$ satisfies $\beta^{(i)}\in\mathbb{N}^d$, $\left|\beta^{(i)}\right|=1$ and $\sum_{i=1}^{|\beta|}\beta^{(i)}=\beta$. By the Fourier transform, there holds:
\begin{align*}
D_x^\beta G(\cdot,1)=D_x^{\beta^{(1)}}G\left(\cdot,\frac{1}{|\beta|}\right)\ast D_x^{\beta^{(2)}}G\left(\cdot,\frac{1}{|\beta|}\right)\ast\cdots\ast D_x^{\beta^{(|\beta|)}}G\left(\cdot,\frac{1}{|\beta|}\right)
\end{align*}
and 
\begin{align*}
D_x^\beta\nabla_xG(\cdot,1)=D_x^{\beta^{(1)}}G\left(\cdot,\frac{1}{|\beta|}\right)\ast D_x^{\beta^{(2)}}G\left(\cdot,\frac{1}{|\beta|}\right)\ast\cdots\ast D_x^{\beta^{(|\beta|)}}\nabla G\left(\cdot,\frac{1}{|\beta|}\right).
\end{align*}
Combining Young's inequality, \eqref{eq:e3} and \eqref{eq:e4}, we obtain inequalities \eqref{eq:estimate1} and \eqref{eq:estimate2}. Then we obtain \eqref{eq:estimate3} and \eqref{eq:estimate4} from \eqref{eq:estimate1} and \eqref{eq:estimate2} by using Young's inequality again. Moreover, from the above calculation, we can choose $C_0$ as 
\[
C_0:=2\left\{\sup_{1\leq p\leq \infty}\|\nabla G(\cdot,1)\|_{L^p},\sup_{1\leq p\leq \infty}\|\nabla^2 G(\cdot,1)\|_{L^p}\right\}.
\]

Next, we prove \eqref{eq:key} by the method in \cite[Lemma 2.1]{carrillo2008asymptotic}, which proves the same results for the  fractional heat kernel when $d=2$.
Choose a sequence of functions $\{f_n\}_{n\in \mathbb{N}}$ satisfying $f_n\in L^p(\mathbb{R}^d)\cap L^q(\mathbb{R}^d)$  and $\|f_n-f\|_{L^p}\to 0$ as $n\to \infty$. From Young's inequality, we have
\[
\left\|G(\cdot,t)\ast f_n\right\|_{L^q}\leq \|f_n\|_{L^q},
\]
which implies
\begin{align*}
\lim_{t\to 0}t^{\frac{d}{2}(\frac{1}{p}-\frac{1}{q})}\|G(\cdot,t)\ast f_n\|_{L^q}=0,\quad q>p.
\end{align*}
By Young's inequality again, we have
\begin{align}\label{eq:convergence}
\left\|t^{\frac{d}{2}(\frac{1}{p}-\frac{1}{q})}G(\cdot,t)\ast f_n-t^{\frac{d}{2}(\frac{1}{p}-\frac{1}{q})}G(\cdot,t)\ast f\right\|_{L^q}\leq C\|f_n-f\|_{L^p}\to 0~\textrm{ as }~n\to\infty.
\end{align}
The above convergence is independent of $t$. From this, we obtain
\begin{align*}
\lim_{t\to 0}t^{\frac{d}{2}(\frac{1}{p}-\frac{1}{q})}\|G(\cdot,t)\ast f\|_{L^q}=0,\quad q>p.
\end{align*}

\end{proof}

For the space-time derivatives of heat kernel $G$, we have the following elementary estimates:
\begin{proposition}\label{pro:heatkernelestimate2}
Let $f\in L^1(\mathbb{R}^d)$, $k\in\mathbb{N}$, $\beta \in\mathbb{N}^d$ and $|\beta|+k>0$. Assume $m\in\mathbb{N}$ satisfying $0\leq m\leq k$. Then, there exists a constant $M_0>0$  independent of $\beta$ and $k$ such that the following inequalities hold for $1\leq q\leq +\infty$:
\begin{align}\label{eq:estimate5}
\left\|D_x^\beta\partial_t^k\left[t^mG(\cdot,t)\right]\right\|_{L^q}\leq M_0^{\frac{|\beta|}{2}+k}\left(|\beta|+k\right)^{\frac{|\beta|}{2}+k+\frac{d}{2}(1-\frac{1}{q})}t^{-\frac{|\beta|+2(k-m)}{2}-\frac{d}{2}(1-\frac{1}{q})},
\end{align}
\begin{align}\label{eq:estimate6}
\left\|D_x^\beta\partial_t^k\left[t^m\nabla G(\cdot,t)\right]\right\|_{L^q}\leq M_0^{\frac{|\beta|}{2}+k}\left(|\beta|+k\right)^{\frac{|\beta|+1}{2}+k+\frac{d}{2}(1-\frac{1}{q})}t^{-\frac{|\beta|+2(k-m)+1}{2}-\frac{d}{2}(1-\frac{1}{q})},
\end{align}
\begin{align}\label{eq:estimate7}
\left\|D_x^\beta\partial_t^k\left[t^mG(\cdot,t)\ast f\right]\right\|_{L^q}\leq M_0^{\frac{|\beta|}{2}+k}\left(|\beta|+k\right)^{\frac{|\beta|}{2}+k+\frac{d}{2}(1-\frac{1}{q})}t^{-\frac{|\beta|+2(k-m)}{2}-\frac{d}{2}(1-\frac{1}{q})}\|f\|_{L^1},
\end{align}
\begin{align}\label{eq:estimate8}
\left\|D_x^\beta\partial_t^k\left[t^m\nabla G(\cdot,t)\ast f\right]\right\|_{L^q}\leq M_0^{\frac{|\beta|}{2}+k}\left(|\beta|+k\right)^{\frac{|\beta|+1}{2}+k+\frac{d}{2}(1-\frac{1}{q})}t^{-\frac{|\beta|+2(k-m)+1}{2}-\frac{d}{2}(1-\frac{1}{q})}\|f\|_{L^1}. 
\end{align}
\end{proposition}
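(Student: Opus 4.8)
The plan is to reduce every one of \eqref{eq:estimate5}--\eqref{eq:estimate8} to the spatial estimates of Lemma~\ref{lmm:heatkernelestimate1} via the heat identity $\partial_t G=\Delta G$. Iterating it gives $\partial_t^{\ell}G=\Delta^{\ell}G$ for each $\ell\in\mathbb N$, which is a purely spatial derivative of order $2\ell$; differentiating the convolution $G(\cdot,t)\ast f$ in $t$ and in $x$ under the integral sign is legitimate for $t>0$ because of the locally (in $t$) uniform integrable bounds already recorded in Lemma~\ref{lmm:heatkernelestimate1}. Applying the Leibniz rule to $\partial_t^k\big(t^m G(\cdot,t)\big)$ and using $m\le k$ (so the sum truncates at $j=m$) gives
\[
D_x^\beta\partial_t^k\big(t^m G(\cdot,t)\big)=\sum_{j=0}^{m}\binom{k}{j}\frac{m!}{(m-j)!}\,t^{m-j}\,D_x^\beta\Delta^{k-j}G(\cdot,t),
\]
and likewise with $G$ replaced by $\nabla G$. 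Since \eqref{eq:estimate7} and \eqref{eq:estimate8} follow at once from \eqref{eq:estimate5} and \eqref{eq:estimate6} by Young's inequality $\big\|[D_x^\beta\partial_t^k(t^mG)]\ast f\big\|_{L^q}\le\big\|D_x^\beta\partial_t^k(t^mG)\big\|_{L^q}\|f\|_{L^1}$, it is enough to bound the two sums above in $L^q$.

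I would then estimate termwise. For each $j$, $D_x^\beta\Delta^{k-j}G$ is a spatial derivative of order $|\beta|+2(k-j)$, so, whenever that order is positive, \eqref{eq:estimate1} gives
\[
\big\|D_x^\beta\Delta^{k-j}G(\cdot,t)\big\|_{L^q}\le C_0^{\frac{|\beta|}{2}+k-j}\Big(\tfrac{|\beta|}{2}+k-j\Big)^{\frac{|\beta|}{2}+k-j+\frac d2(1-\frac1q)}t^{-\frac{|\beta|}{2}-(k-j)-\frac d2(1-\frac1q)};
\]
multiplying by $t^{m-j}$ turns the $t$-power into $t^{-\frac{|\beta|+2(k-m)}{2}-\frac d2(1-\frac1q)}$ for \emph{every} $j$, precisely the power claimed in \eqref{eq:estimate5}, and the $\nabla G$ case is identical via \eqref{eq:estimate2}, producing an extra $\tfrac12$ in the exponents and matching \eqref{eq:estimate6}. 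Thus the proposition comes down to the purely algebraic inequality
\[
\sum_{j=0}^{m}\binom{k}{j}\frac{m!}{(m-j)!}\,C_0^{\frac{|\beta|}{2}+k-j}\Big(\tfrac{|\beta|}{2}+k-j\Big)^{\frac{|\beta|}{2}+k-j+\frac d2(1-\frac1q)}\le M_0^{\frac{|\beta|}{2}+k}(|\beta|+k)^{\frac{|\beta|}{2}+k+\frac d2(1-\frac1q)}
\]
and its analogue with the exponents on both sides raised by $\tfrac12$ (the $\nabla G$ case).

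The heart of the matter is this last inequality. Put $b:=\tfrac{|\beta|}{2}+k$, $c:=\tfrac d2(1-\tfrac1q)\in[0,\tfrac d2]$ and, for $0\le j\le m$, $a:=\tfrac{|\beta|}{2}+k-j$, so that $0\le k\le b$, $0\le a\le b$ and $a+c\ge0$. Since $m\le k$, $\tfrac{m!}{(m-j)!}\le k^{j}$, and since $a\le b$, $a^{a+c}\le b^{a+c}=b^{b+c}b^{-j}$; multiplying, $\tfrac{m!}{(m-j)!}\,a^{a+c}\le (k/b)^{j}b^{b+c}\le b^{b+c}\le(|\beta|+k)^{b+c}$. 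Together with $C_0^{\,b-j}\le C_0^{\,b}$ (we may assume $C_0\ge1$) and $\sum_{j=0}^{m}\binom{k}{j}\le 2^{k}\le 2^{b}$, the left side is $\le(2C_0)^{b}(|\beta|+k)^{b+c}$, so $M_0=2C_0$ works; the $\nabla G$ variant only replaces $a^{a+c}$ by $a^{a+c+\frac12}$ and runs the same way. The one term not covered by Lemma~\ref{lmm:heatkernelestimate1} is $\beta=0$, $j=k=m$, i.e.\ $k!\,G(\cdot,t)$, handled by hand via $\|k!\,G(\cdot,t)\|_{L^q}=k!\,t^{-\frac d2(1-\frac1q)}\|G(\cdot,1)\|_{L^q}\le k^{k}t^{-\frac d2(1-\frac1q)}$, which is absorbed by the right side of \eqref{eq:estimate5}. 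Everything else is bookkeeping; the only real obstacle is recognizing that the factorial growth $\tfrac{m!}{(m-j)!}$ produced by differentiating $t^m$ is exactly offset by the decrease of $(\tfrac{|\beta|}{2}+k-j)^{\,\cdot}$ relative to $(\tfrac{|\beta|}{2}+k)^{\,\cdot}$, which is possible precisely because $m\le k$ (equivalently $k\le b$). Once that is in place, all $q\in[1,\infty]$ are treated uniformly through the single parameter $c$, so no separate argument for $q=1$ or $q=\infty$ is needed.
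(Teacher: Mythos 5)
Your proof follows essentially the same route as the paper: reduce time derivatives to space derivatives via $\partial_t G=\Delta G$, use the Leibniz rule to absorb the polynomial factor $t^m$, and then invoke the purely spatial bounds of Lemma~\ref{lmm:heatkernelestimate1}; the paper just splits this into two steps ($m=0$ first, then Leibniz), while you do both at once, and your bookkeeping with $a,b,c$ (using $\tfrac{m!}{(m-j)!}\le k^j$ offset by $a^{a+c}\le b^{b+c}b^{-j}$ since $k\le b$) repackages the same cancellation the paper gets from $m(m-1)\cdots(m-k+j+1)\le(|\beta|+k)^{k-j}$.

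There is, however, one small but real slip. You treat $D_x^\beta\Delta^{k-j}G$ as ``a spatial derivative of order $|\beta|+2(k-j)$'' and apply \eqref{eq:estimate1} to it directly, but for $d\ge 2$ it is a \emph{sum} of such derivatives, $\Delta^{k-j}=\sum_{|\alpha|=k-j}\tfrac{(k-j)!}{\alpha!}\partial_x^{2\alpha}$, whose multinomial weights total $d^{k-j}$. One therefore needs the intermediate bound
\[
\left\|D_x^\beta\Delta^{k-j}G(\cdot,t)\right\|_{L^q}\le d^{\,k-j}\sup_{|\tilde\beta|=|\beta|+2(k-j)}\left\|D_x^{\tilde\beta}G(\cdot,t)\right\|_{L^q},
\]
which is precisely the first display in the paper's Step~1. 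The extra $d^{\,k-j}$ is harmless since $d^{\,k-j}\le d^{\frac{|\beta|}{2}+k-j}$, so it simply promotes your $C_0$ to $dC_0$ and your final constant from $M_0=2C_0$ to $M_0=2dC_0$, exactly as in the paper. With that correction, the rest of your argument — the reduction of \eqref{eq:estimate7}--\eqref{eq:estimate8} to \eqref{eq:estimate5}--\eqref{eq:estimate6} by Young's inequality, the separate treatment of the edge term $\beta=0,\ j=m=k$ by scaling $\|G(\cdot,t)\|_{L^q}$, and the $\nabla G$ variant with exponents raised by $\tfrac12$ — is all correct.
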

\begin{proof}
\textbf{Step 1:} We prove the case of $m=0$.

We first prove \eqref{eq:estimate6}.
Since $\Delta=\sum_{i=1}^d\partial_{x_i}^2$ and $\left\|D_x^{\beta}\partial_t^{k}\nabla G(\cdot,t)\right\|_{L^q}= \left\|D_x^{\beta}\Delta^{k}\nabla G(\cdot,t)\right\|_{L^q},$
combining \eqref{eq:estimate2} gives
\begin{equation}\label{eq:estimate55}
\begin{aligned}
\left\|D_x^{\beta}\partial_t^{k} \nabla G(\cdot,t)\right\|_{L^q}&\leq d^k\sup_{\substack{\beta\leq \tilde{\beta},\\|\tilde{\beta}|=|\beta|+2k}}\|D_x^{\tilde{\beta}}\nabla G(\cdot,t)\|_{L^q}\\
&\leq (dC_{0})^{\frac{|\beta|+2k}{2}}\left(\frac{|\beta|+2k}{2}\right)^{\frac{|\beta|+2k+1}{2}+\frac{d}{2}(1-\frac{1}{q})}t^{-\frac{|\beta|+2k+1}{2}-\frac{d}{2}(1-\frac{1}{q})}.%\\
%&\leq (dC_{0})^{|\beta|+k}\left(|\beta|+k\right)^{|\beta|+k+\frac{1}{2}+\frac{d}{2}(1-\frac{1}{q})}t^{-\frac{|\beta|+2k+1}{2}-\frac{d}{2}(1-\frac{1}{q})}.
\end{aligned}
\end{equation}
Therefore, inequality \eqref{eq:estimate6} holds for constant $\tilde{M}_0=dC_0$. By similar arguments, we obtain \eqref{eq:estimate5} for $m=0$.

Inequalities \eqref{eq:estimate7} and \eqref{eq:estimate8} follow from \eqref{eq:estimate5}, \eqref{eq:estimate6}, and Young's inequality.

\textbf{Step 2:} We prove for the case: $0< m\leq k$. We will also prove \eqref{eq:estimate6} first.
We have
\begin{align*}
\left\|D_x^\beta\partial_t^k\left[t^m\nabla G(\cdot,t)\right]\right\|_{L^q}&=\left\|\sum_{j=0}^k\binom{k}{j}(\partial_t^{k-j}t^m)D_x^\beta\partial_t^j\nabla G(\cdot,t)\right\|_{L^q}\\
&=\left\|\sum_{j=k-m}^k\binom{k}{j}(\partial_t^{k-j}t^m)D_x^\beta\partial_t^j \nabla G(\cdot,t)\right\|_{L^q}.
\end{align*}
Using the fact
\[
\partial_t^{k-j}t^m=m(m-1)\cdots(m-k+j+1)t^{m-k+j}\leq \left(|\beta|+k\right)^{k-j}t^{m-k+j},
\]
combining Step 1 gives
\begin{align*}
&\left\|D_x^\beta\partial_t^k\left[t^m \nabla G(\cdot,t)\right]\right\|_{L^q}\\
\leq&\sum_{j=k-m}^{k}\binom{k}{j} \left(|\beta|+k\right)^{k-j} t^{m-k+j} \tilde{M}_0^{\frac{|\beta|}{2}+j}\left(|\beta|+k\right)^{\frac{|\beta|+1}{2}+j+\frac{d}{2}(1-\frac{1}{q})}t^{-\frac{|\beta|+2j+1}{2}-\frac{d}{2}(1-\frac{1}{q})}\\
\leq&\sum_{j=k-m}^{k}\binom{k}{j} \tilde{M}_0^{\frac{|\beta|}{2}+k}\left(|\beta|+k\right)^{\frac{|\beta|+1}{2}+k+ \frac{d}{2}(1-\frac{1}{q})}t^{-\frac{|\beta|+2(k-m)+1}{2}-\frac{d}{2}(1-\frac{1}{q})}\\
\leq &2^k \tilde{M}_0^{\frac{|\beta|}{2}+k}\left(|\beta|+k\right)^{\frac{|\beta|+1}{2}+k+\frac{d}{2}(1-\frac{1}{q}) }t^{-\frac{|\beta|+2(k-m)+1}{2}-\frac{d}{2}(1-\frac{1}{q})},
\end{align*}
where we used the fact $\sum_{j=k-m}^{k}\binom{k}{j}\leq \sum_{j=1}^{k}\binom{k}{j} =2^k$ in the last inequality. Choose $M_0=2\tilde{M}_0$ and inequality \eqref{eq:estimate6} holds for any $0\leq m\leq k$.

Then, we can obtain \eqref{eq:estimate5}, \eqref{eq:estimate7}, and \eqref{eq:estimate8} for any $0\leq m\leq k$ similarly as in Step 1.
\end{proof}

\section{Local well-posedness and instantaneous regularity}\label{sec:local}
In this section, we are going to show local existence, uniqueness and instantaneous regularity of mild solutions to the mPKS equations  \eqref{eq:KS} with initial data $\rho_0\in L^1(\mathbb{R}^d)$. 
We will study the mild solutions in the following space:
\begin{align}\label{eq:spaces}
X_T:=\left\{f\in C_b\left((0,T];L^{1}(\mathbb{R}^d)\right),~~\sup_{t\in(0,T]}t^{\frac{1}{4}}\|f(t)\|_{L^{\frac{2d}{2d-1}}}<\infty\right\}
\end{align}
with norm 
\[
\|f\|_{X_T}:=\max\left\{\sup_{t\in(0,T]}\|f(t)\|_{L^1},\quad\sup_{t\in(0,T]}t^{\frac{1}{4}}\|f(t)\|_{L^{\frac{2d}{2d-1}}} \right\}.
\]
Then, space $(X_T,\|\cdot\|_{X_T})$ is a Banach space. The definition of the mild solutions is given by
\begin{definition}[Mild solutions]\label{def:mild}
We call $\rho\in X_T$ a mild solution to the mPKS equations  \eqref{eq:KS} with initial datum $\rho_0\in L^1(\mathbb{R}^d)$ if $\rho$ satisfies the following Duhamel integral equation in  $X_T$:
\begin{align}\label{eq:mild}
\rho(t)=G(\cdot,t)\ast\rho_0-\int_0^t\nabla G(\cdot,t-s)\ast\left[\rho(s)\nabla c(s)\right]\di s,\quad t\in[0,T].
\end{align}
If equation \eqref{eq:mild} holds for any $T>0$, then we call $\rho$ a global mild solution.
\end{definition}
We have the following theorem:
\begin{theorem}\label{thm:contraction}
Let $0\leq \rho_0\in L^{1}(\mathbb{R}^d)$. There exists  a constant $\theta>0$ small enough such that if $T>0$ satisfies
\begin{align}\label{eq:important}
\sup_{t\in(0,T]}t^{\frac{1}{4}}\|G(\cdot,t)\ast\rho_0\|_{L^{\frac{2d}{2d-1}}}\leq \theta,
\end{align}
then there is a unique mild solution $\rho\in X_{T}$ to the mPKS equations  \eqref{eq:KS} in the following set:
\[
X_{T}^\theta:=\left\{u\in X_{T}:~~\sup_{t\in(0,T]}t^{\frac{1}{4}}\|u(t)\|_{L^{\frac{2d}{2d-1}}}\leq 2\theta\right\}.
\]
We also have $\rho\in C\left([0,T]; L^{1}(\mathbb{R}^d)\right)$ and $\rho(x,0)=\rho_0(x)$, for $ x\in\mathbb{R}^d$. 

Moreover, if the total mass $\|\rho_0\|_{L^1}$ is small enough, then for any $T>0$ we have
\begin{align}\label{eq:assumptiona0}
\sup_{t\in(0,T]}t^{\frac{d}{2}(1-\frac{1}{q})}\|G(\cdot,t)\ast\rho_0\|_{L^{q}}\leq \theta,\quad 1\leq q\leq\infty,
\end{align}
and there is a unique global mild solution.
\end{theorem}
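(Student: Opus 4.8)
The plan is to establish Theorem \ref{thm:contraction} by a standard contraction-mapping argument in the Banach space $(X_T,\|\cdot\|_{X_T})$, applied to the nonlinear map
\[
\Phi[\rho](t):=G(\cdot,t)\ast\rho_0-\int_0^t\nabla G(\cdot,t-s)\ast\bigl[\rho(s)\nabla c(s)\bigr]\di s.
\]
First I would record the pointwise bound on the chemical gradient. Differentiating the second equation in \eqref{eq:KS} gives $\nabla c(x,t)=-\tfrac{1}{d\pi}\int_{\mathbb R^d}\tfrac{x-y}{|x-y|^d}\rho(y,t)\di y$, i.e.\ $\nabla c(t)$ is (a constant times) the Riesz potential $I_1$ applied to $\rho(t)$. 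By the Hardy--Littlewood--Sobolev inequality, for $1<r<d$ one has $\|\nabla c(t)\|_{L^{r^*}}\le C\|\rho(t)\|_{L^r}$ with $\tfrac1{r^*}=\tfrac1r-\tfrac1d$. The choice of the exponent $\tfrac{2d}{2d-1}$ in the definition of $X_T$ is exactly what makes this work: with $r=\tfrac{2d}{2d-1}$ one gets $r^*=\tfrac{2d}{2d-3}$... more carefully, the product $\rho\nabla c$ will be estimated by Hölder, and one checks that $\rho(s)\in L^{2d/(2d-1)}$ and $\nabla c(s)\in L^{2d/(2d-3)}$ (or the relevant conjugate pairing) places $\rho(s)\nabla c(s)$ in an $L^p$ space for which $\nabla G(\cdot,t-s)\ast(\cdot)$ maps, via \eqref{eq:estimate4} with $|\beta|=0$, back into $L^1$ and into $L^{2d/(2d-1)}$ with integrable-in-$s$ singularities near $s=0$ and $s=t$.

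The core of the argument is then the two estimates (i) $\|\Phi[\rho]\|_{X_T}\le \sup_{t}t^{1/4}\|G(\cdot,t)\ast\rho_0\|_{L^{2d/(2d-1)}}+C_1\|\rho\|_{X_T}^2$ and (ii) $\|\Phi[\rho]-\Phi[\tilde\rho]\|_{X_T}\le C_2(\|\rho\|_{X_T}+\|\tilde\rho\|_{X_T})\|\rho-\tilde\rho\|_{X_T}$, where the constants $C_1,C_2$ are \emph{independent of $T$} (this is the crucial scaling-invariant feature: the time integrals produce no positive power of $T$). For (i), I split into the two components of the $X_T$-norm. For the $L^1$ component, use \eqref{eq:estimate3}/\eqref{eq:estimate4} with $q=1$ and $p$ chosen from the Hölder exponent of $\rho\nabla c$, getting a factor $(t-s)^{-1/2-\frac d2(\frac1p-1)}$ which, combined with the $s$-dependent bounds $\|\rho(s)\|\lesssim s^{-1/4}\|\rho\|_{X_T}$ (and the $L^1$ bound $\|\rho(s)\|_{L^1}\le\|\rho\|_{X_T}$), yields a Beta-function integral $\int_0^t(t-s)^{-a}s^{-b}\di s=c\,t^{1-a-b}$ with $a+b<1$ and, by the scaling relation, $1-a-b$ the correct power (namely $0$ for the $L^1$ part, $-1/4$ for the $L^{2d/(2d-1)}$ part). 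For the $L^{2d/(2d-1)}$ component one does the same with $q=\tfrac{2d}{2d-1}$. Squaring comes from the two factors of $\rho$ hidden in $\rho\nabla c$ (one explicit, one through $\nabla c$). Estimate (ii) is proved identically after writing $\rho\nabla c-\tilde\rho\nabla\tilde c=(\rho-\tilde\rho)\nabla c+\tilde\rho\nabla(c-\tilde c)$ and noting $\nabla(c-\tilde c)$ is the Riesz potential of $\rho-\tilde\rho$. Then, choosing $\theta$ small enough that $C_1(2\theta)^2\le\theta$ and $C_2(4\theta)<1$, the map $\Phi$ is a contraction on $X_T^\theta$, producing the unique fixed point; uniqueness in the larger set $X_T$ follows by the usual argument (any two solutions agree on a small initial interval where both have small $L^{2d/(2d-1)}$-weighted norm, then a continuation/connectedness argument, or directly absorbing the quadratic difference term using $T$-independence on short intervals).

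For the continuity-up-to-$t=0$ statement, I would show that $G(\cdot,t)\ast\rho_0\to\rho_0$ in $L^1$ as $t\to0$ (standard approximate-identity property) and that the Duhamel integral term tends to $0$ in $L^1$ as $t\to0$ — the latter from the same Beta-integral bound, which gives it size $O(t^{0})\|\rho\|_{X_T}^2$ with in fact a vanishing prefactor once one uses \eqref{eq:key}/\eqref{eq:key1} to see the relevant weighted norms of $\rho$ are $o(1)$, not merely $O(1)$, near $s=0$; combined with dominated convergence this yields $\rho\in C([0,T];L^1)$ and $\rho(\cdot,0)=\rho_0$. Finally, for the small-mass global statement, I would invoke \eqref{eq:estimate3} with $p=1$: $\|G(\cdot,t)\ast\rho_0\|_{L^q}\le C\,t^{-\frac d2(1-\frac1q)}\|\rho_0\|_{L^1}$ for all $1\le q\le\infty$, so $\sup_{t>0}t^{\frac d2(1-\frac1q)}\|G(\cdot,t)\ast\rho_0\|_{L^q}\le C\|\rho_0\|_{L^1}\le\theta$ once $\|\rho_0\|_{L^1}$ is small; in particular \eqref{eq:important} holds with this $\theta$ for \emph{every} $T>0$, and since the contraction constants $C_1,C_2$ do not depend on $T$, the local solutions on $[0,T]$ patch together into a global one (the a priori bound $\|\rho\|_{X_T}\le2\theta$ being uniform in $T$).

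I expect the main obstacle to be the bookkeeping of Hölder/HLS exponents — verifying that the single weighted space $L^{2d/(2d-1)}$ (together with $L^1$) is rich enough to close the estimate for the quadratic term in every dimension $d$, and that all the resulting time exponents $a,b$ in the Beta integrals satisfy $a<1$, $b<1$, $a+b<1$ with the sum landing on exactly the scaling-dictated power. The dimension $d=2$ versus $d\ge3$ (where $\nabla c = I_1\rho$ behaves differently relative to the integrability of $\rho$) is the delicate point, but the scaling invariance of $\|\cdot\|_{L^1}$ and of the $X_T$ norm guarantees the powers match; the remaining work is purely to check the strict inequalities, which is routine.
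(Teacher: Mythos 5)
Your overall strategy---contraction mapping on $X_T$, Hardy--Littlewood--Sobolev plus H\"older for the nonlinear term $\rho\nabla c$, Beta-function integrals with $T$-independent constants, and the small-mass global statement from the $L^1\to L^q$ heat estimate---matches the paper's proof very closely. However, your formula for $\nabla c$ is wrong, and the error propagates into HLS exponents that fail to close the estimate for every dimension $d\ge 3$. You wrote
\[
\nabla c(x,t)=-\frac{1}{d\pi}\int_{\mathbb R^d}\frac{x-y}{|x-y|^d}\,\rho(y,t)\,\di y,
\]
which identifies $\nabla c$ (up to a constant) with the Riesz potential $I_1\rho$, whose kernel has magnitude $|x|^{-(d-1)}$. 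But the model defines $c=-\frac{1}{d\pi}\ln|\cdot|\ast\rho$ in \emph{every} dimension, and $\nabla_x\ln|x-y|=\frac{x-y}{|x-y|^2}$, so the kernel of $\nabla c$ has magnitude $|x|^{-1}$ for all $d$ (this equals $|x|^{-(d-1)}$ only when $d=2$). With $\lambda=1$ in HLS one has $\frac1q=\frac1p+\frac1d-1$, which gives the paper's bound $\|\nabla c\|_{L^{2d}}\le C\|\rho\|_{L^{2d/(2d-1)}}$; pairing this with $\rho\in L^{2d/(2d-1)}$ in H\"older ($\frac{2d-1}{2d}+\frac{1}{2d}=1$) puts $\rho\nabla c$ cleanly in $L^1$. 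Your proposed $\nabla c\in L^{2d/(2d-3)}$ would instead give $\rho\nabla c\in L^{d/(2(d-1))}$, which is below $L^1$ as soon as $d\ge 3$, so the contraction estimate does not close. In other words, the exponent bookkeeping you flagged as ``routine to check'' is exactly where your proposal, as written, breaks; the fix is simply to use the correct $|x|^{-1}$ kernel.

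Two minor points: the theorem only asserts uniqueness within the ball $X_T^\theta$, so your additional claim of uniqueness in all of $X_T$ exceeds what is being proved here. Also, your approach to continuity at $t=0$ (approximate identity plus a vanishing prefactor from \eqref{eq:key1}) differs from the paper's, which compares the fixed point on $[0,T]$ with the fixed point on shorter intervals $[0,\bar T]$ in the smaller balls $X_{\bar T}^{\bar\theta}$ and lets $\bar T,\bar\theta\to 0$; both routes are viable, and they use the same ingredient \eqref{eq:key1}.
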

The local well-posedness of mild solutions for some dissipative equations similar to the mPKS equations is well studied; see, e.g. \cite{weissler1980local,kato1984strongl,biler1995cauchy,biler2010blowup,carrillo2008asymptotic}. Since some estimates in the proof of this theorem are very useful in the rest of this paper, we are going to provide a complete proof here. 
\begin{proof}

Define the operator $S$:
\begin{align}\label{eq:contractionmap}
S\rho:=G(\cdot,t)\ast\rho_0-\int_0^t\nabla G(\cdot,t-s)\ast\left[\rho(s)\nabla c(s)\right]\di s. 
\end{align}
We only need to prove that $S:X_T^\theta\to X_T^\theta$ is a contraction mapping for $\theta$ small enough.

\textbf{Step 1}: We show that $S\rho \in X_{T}^\theta$ for $\rho\in X_{T}^\theta$. Suppose $c(x,s)=-\frac{1}{d\pi}\int_{\mathbb{R}^d}\ln|x-y|\rho(y,s)\di y$. Set $S\rho=\rho_1+\rho_2$ with
\begin{align}\label{eq:rho12}
\rho_1(x,t):=G(\cdot,t)\ast\rho_0,\quad \rho_2(x,t):=-\int_0^t\nabla G(\cdot,t-s)\ast[\rho(s)\nabla c(s)]\di s.
\end{align}
We first prove that $S\rho\in C_b\left((0,T];L^{1}(\mathbb{R}^d)\right)$.
Notice that $\rho_1$ is the solution of the heat equation with initial datum $\rho_0$, and we have $\rho_1\in C_b\left((0,T];L^{1}(\mathbb{R}^d)\right)$.

From Hardy-Littlewood-Sobolev inequality, we have
\begin{align*}
\|\nabla c(t)\|_{L^{2d}}=\frac{1}{d\pi}\left\||x|^{-1}\ast \rho(t)\right\|_{L^{2d}}\leq C\|\rho(t)\|_{L^{\frac{2d}{2d-1}}}.
\end{align*}
Combining with H\"older's inequality gives
\begin{align}\label{eq:HLSI}
\|\rho(x)\nabla c(s)\|_{L^{1}}\leq \|\rho(s)\|_{L^{\frac{2d}{2d-1}}}\|\nabla c(s)\|_{L^{2d}}\leq C\|\rho(s)\|_{L^{\frac{2d}{2d-1}}}^2
\end{align}
From \eqref{eq:HLSI} and \eqref{eq:estimate4} with $p=q=1$, we have
\begin{equation}\label{eq:boundedd2}
\begin{aligned}
\|\rho_2(t)\|_{L^{1}}\leq &\int_0^t\left\|\nabla G(\cdot,t-s)\ast[\rho(s)\nabla c(s)]\right\|_{L^{1}}\di s\leq C\int_0^t(t-s)^{-\frac{1}{2}} \left\|\rho(s)\nabla c(s)\right\|_{L^{1}}\di s\\
\leq &C\theta^2\int_0^t(t-s)^{-\frac{1}{2}}s^{-\frac{1}{2}}\di s\leq C\mathcal{B}\left(\frac{1}{2},\frac{1}{2}\right)\theta^2,
\end{aligned}
\end{equation} 
where $\mathcal{B}(\frac{1}{2},\frac{1}{2})$ is a Beta function $\mathcal{B}(a,b)=\int_0^1s^{a-1}(1-s)^{b-1}\di s$ with $a=\frac{1}{2}$ and $b=\frac{1}{2}$. 
This estimate shows that $\rho_2(t)\in L^{1}(\mathbb{R}^d)$. For the time continuity of $\rho_2(t)$, the proof is routine and tedious, and hence we put it into appendix \ref{app:continuity}.

Next, we estimate $t^{\frac{1}{4}}\|S\rho(t)\|_{L^{\frac{2d}{2d-1}}}$. For $p=1$ and $q=\frac{2d}{2d-1}$ in \eqref{eq:estimate4},  combine \eqref{eq:HLSI} and we have
\begin{align*}
\|\nabla G(\cdot,t-s)\ast [\rho(s)\nabla c(s)]\|_{L^{\frac{2d}{2d-1}}}&\leq C(t-s)^{-\frac{3}{4}}\|\rho(s)\nabla c(s)\|_{L^{1}}\\
&\leq C(t-s)^{-\frac{3}{4}}s^{-\frac{1}{2}}\theta^2,
\end{align*}
which implies
\begin{multline}
\sup_{0<t\leq T}t^{\frac{1}{4}}\int_0^t \|\nabla G(\cdot,t-s)\ast  [\rho(s)\nabla c(s)]\|_{L^{\frac{2d}{2d-1}}}\di s \\
\leq C\theta^2 \cdot \sup_{0<t\leq T}t^{\frac{1}{4}}\int_0^t (t-s)^{-\frac{3}{4}}s^{-\frac{1}{2}} \di s \leq C\mathcal{B}\left(\frac{1}{2},\frac{1}{4}\right)\theta^2.
\end{multline}
Here, the constant $C$ is independent of $T$. Hence, there exists $\theta>0$ small enough such that
\begin{align}\label{eq:in1}
\sup_{0<t\leq T}t^{\frac{1}{4}}\|S\rho(t)\|_{L^{\frac{2d}{2d-1}}}
\leq \sup_{0<t\leq T}t^{\frac{1}{4}}\|G(\cdot,t)\ast\rho_0\|_{L^{\frac{2d}{2d-1}}}+C\mathcal{B}\left(\frac{1}{2},\frac{1}{4}\right)\theta^2<2\theta.
\end{align}

\textbf{Step 2.} We are going to show that $S$ is a contraction mapping. For $\tilde{\rho},~\hat{\rho}\in X_T^\theta$, assume
\[
\tilde{c}(x,s)=-\frac{1}{d\pi}\int_{\mathbb{R}^d}\ln|x-y|\tilde{\rho}(y,s)\di y, ~~s\in[0,T],
\]
and $\hat{c}$ is generated by $\hat{\rho}$ by the same relation.
To prove $S$ is a contraction mapping, we have
\begin{equation}\label{eq:contraction}
\begin{aligned}
\|S\tilde{\rho}-&S\hat{\rho}\|_{X_T}\leq \left\|\int_0^t\nabla G(\cdot,t-s)\ast\left[(\tilde{\rho}(s)-\hat\rho(s))\nabla \tilde{c}(s)\right]\di s\right\|_{X_T}\\
&+ \left\|\int_0^t\nabla G(\cdot,t-s)\ast\left[\hat\rho(s)\nabla (\tilde{c}(s)-\hat{c}(s))\right]\di s\right\|_{X_T}.
\end{aligned}
\end{equation}
By the same estimates as in step 1, we have the following estimates for the first term in \eqref{eq:contraction}:
\begin{align*}
\sup_{0\leq t\leq T}\left\|\int_0^t\nabla G(\cdot,t-s)\ast\left[(\tilde{\rho}(s)-\hat\rho(s))\nabla \tilde{c}(s)\right]\di s\right\|_{L^{1}}\leq C\theta\left\|\tilde{\rho}(s)-\hat\rho(s)\right\|_{X_T},
\end{align*}
and
\begin{align*}
\sup_{0< t\leq T}t^{\frac{1}{4}}\left\|\int_0^t\nabla G(\cdot,t-s)\ast\left[(\tilde{\rho}(s)-\hat\rho(s))\nabla \tilde{c}(s)\right]\di s\right\|_{L^{\frac{2d}{2d-1}}}\leq C\theta\left\|\tilde{\rho}(s)-\hat\rho(s)\right\|_{X_T}
\end{align*}
for some constant $C$ independent of $T$.
Similar estimates can also be obtained for the second term in the right hand of inequality \eqref{eq:contraction}, and we obtain
\begin{align}\label{eq:contractionlast}
\left\|S\tilde\rho -S\hat\rho_2 \right\|_{X_T}\leq C\theta\left\|\tilde{\rho}(s)-\hat\rho(s)\right\|_{X_T}.
\end{align}
Hence, $S$ is a contraction mapping for small enough $\theta>0$.

\textbf{Step 3.} In this step, we are going to prove the time continuity at $t=0$. By the property of the solution to the heat equation, we have
\begin{align*}
\lim_{t\rightarrow 0}\|G(\cdot,t)\ast\rho_0-\rho_0\|_{L^1}=0,
\end{align*} 
hence we only need to show
\begin{align}\label{eq:timecontinuity}
\lim_{t\rightarrow 0}\left\|\int_0^t\nabla G(\cdot, t-s)\ast[\rho(s)\nabla c(s)]\di s\right\|_{L^{1}}=0.
\end{align}
Let $\rho$ and $\bar{\rho}$ be the solution in $X_T^\theta$ and $X_{\bar{T}}^{\bar{\theta}}$ respectively with $\bar{\theta}<\theta$ and $\bar{T}<T$. Similarly to \eqref{eq:contractionlast}, we obtain
\begin{align*}
\left\|S\rho -S\bar{\rho} \right\|_{X_{\bar{T}}}\leq C\theta\left\|\rho-\bar{\rho}\right\|_{X_{\bar{T}}}.
\end{align*}
this shows $\bar{\rho}(t)=\rho(t)$ for $0<t\leq\bar{T}$. Due to \eqref{eq:key1}, as $\bar{T}\rightarrow 0$, we could choose $\bar{\theta}\rightarrow 0$.  Combining \eqref{eq:boundedd2} for $\bar{\theta}\rightarrow 0$, we have \eqref{eq:timecontinuity}, and the equality $\rho(x,0)=\rho_0(x)$ follows.

\textbf{Step 4.}  Applying Young's inequality and \eqref{eq:e1} for $\beta=0$, we have
\begin{align*}
t^{\frac{d}{2}(1-\frac{1}{q})}\|G(\cdot,t)\ast \rho_0\|_{L^{q}}\leq C\|\rho_0\|_{L^1},\quad 1\leq q\leq \infty.
\end{align*}
When $C\|\rho_0\|_{L^1}\leq\theta$,  inequality \eqref{eq:assumptiona0} holds for any $T>0$. Condition \eqref{eq:important}  corresponds to $q=\frac{2d}{2d-1}$ in \eqref{eq:assumptiona0}. Therefore, from the above steps, there exists a unique global mild solution for the initial datum $\rho_0$ with small total mass. 

\end{proof}
For any initial datum  $\rho_0\in L^1(\mathbb{R}^d)$ and $\theta>0$, due to \eqref{eq:key1}, there exists $T>0$ such that \eqref{eq:important} holds, which implies local existence and uniqueness of a mild solution. Based on \eqref{eq:key1}, the time $T$ for inequality \eqref{eq:important} usually depends on $\theta$. 
The small constant $\theta$ in Theorem \ref{thm:contraction} plays a very import role in this paper.  
In the rest of this paper, we will assume that the initial datum $\rho_0\in L^1(\mathbb{R}^d)$ satisfies \eqref{eq:assumptiona0} for some small constant $\theta$, i.e.,
\begin{align}\label{eq:assumptiona}
\sup_{t\in(0,T]}t^{\frac{d}{2}(1-\frac{1}{q})}\|G(\cdot,t)\ast\rho_0\|_{L^{q}}\leq \theta,\quad 1\leq q\leq\infty \tag{$\mathscr{A}$}
\end{align}
for some time $T>0$. From Theorem \ref{thm:contraction}, the assumption \eqref{eq:assumptiona} holds for any time $T>0$ if $\|\rho_0\|_{L^1}$ is small enough, and hence global existence and uniqueness of mild solutions for these small initial data are obtained.

\begin{remark}
In the rest of this paper, we will prove our results with the assumption \eqref{eq:assumptiona} to emphasize the results for small initial data. However, except for the time analyticity (see Step 3 in the proof of Theorem \ref{thm:analytic} and Remark \ref{rmk:assumptiona}), all the other results  can be obtained without the assumption \eqref{eq:assumptiona} following the same methods.
\end{remark}

We have the following  $L^q(\mathbb{R}^d)$ ($1\leq q\leq\infty$) estimates (i.e., hypercontractivity) for the mild solutions:
\begin{theorem}[$L^q(\mathbb{R}^d)$ estimate]\label{thm:regularity1}
Let $0\leq \rho_0\in L^{1}(\mathbb{R}^d)$ and $T>0$ satisfy \eqref{eq:assumptiona} for some  constant $\theta$. Then, the mild solution $\rho(t)$ obtained by Theorem \ref{thm:contraction} belongs to $L^q(\mathbb{R}^d)$ for any $1\leq q\leq +\infty$, and we have
\begin{align}\label{eq:q}
\|\rho(t)\|_{L^q(\mathbb{R}^d)}\leq A\theta t^{-\frac{d}{2}(1-\frac{1}{q})},\quad  1\leq q\leq\infty,~~0<t\leq T,
\end{align}
where  $A$ is a constant independent of $\rho_0,~T$, and $\theta$.
\end{theorem}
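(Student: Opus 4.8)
The plan is to run a bootstrapping argument on the integral equation \eqref{eq:mild}, exactly parallel to the structure already used in the proof of Theorem \ref{thm:contraction}, but now upgrading the spatial integrability of $\rho(t)$ step by step. We start from what Theorem \ref{thm:contraction} gives us: $\rho \in X_T^\theta$, i.e. $\sup_{t}\|\rho(t)\|_{L^1}\le C\theta$ and $\sup_t t^{1/4}\|\rho(t)\|_{L^{2d/(2d-1)}}\le 2\theta$. We want to prove \eqref{eq:q} for all $q$; the obstacle, as the introduction flags, is that the Hardy–Littlewood–Sobolev inequality only lets us gain a limited amount of integrability per step, so we cannot jump to $q=\infty$ in one shot.

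First I would set $N(q,t) = t^{\frac{d}{2}(1-\frac1q)}\|\rho(t)\|_{L^q}$ and insert the Duhamel formula \eqref{eq:mild}. The linear term $G(\cdot,t)\ast\rho_0$ is controlled by $\theta$ for every $q$ by assumption \eqref{eq:assumptiona}. For the nonlinear term, using \eqref{eq:estimate4} with a suitable source exponent $p$, together with the HLS estimate $\|\nabla c(s)\|_{L^{2d}}\le C\|\rho(s)\|_{L^{2d/(2d-1)}}$ and Hölder's inequality, one bounds $\|\rho(s)\nabla c(s)\|_{L^p}$ by (a product of) already-controlled quantities times a negative power of $s$, and the time integral $\int_0^t (t-s)^{-a}s^{-b}\,ds$ is a Beta function provided $a<1$ and $b<1$. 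The scaling works out so that $t^{\frac d2(1-\frac1q)}$ times this integral is bounded by a constant (independent of $T$) times $\theta^2$ whenever $q$ lies in the next admissible window. Concretely: knowing control of $\|\rho\|_{L^{q'}}$ for $1\le q'<\frac{d}{d-k}$ allows one to take $\nabla c(s)\in L^{r}$ for $r$ up to $\frac{d}{d-k-1}$-ish via HLS, hence $\rho\nabla c \in L^p$ for a slightly larger $p$, which after applying \eqref{eq:estimate4} yields $\rho(t)\in L^q$ for $q$ up to $\frac{d}{d-(k+1)}$. After finitely many ($\le d$) such steps we cover all finite $q$, and then $q=\infty$ follows by one more application of \eqref{eq:estimate4} using a source norm $L^p$ with $p>d/2$ now available.

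Assembling the bounds, at each stage we get an inequality of the form $N(q,t)\le C\theta + C\theta\cdot\sup_{s\le t}N(q_0,s) + \cdots$ where the right side involves only quantities controlled at earlier stages, so each $\sup_{0<t\le T}N(q,t)$ is bounded by an absolute constant times $\theta$; taking $A$ to be the largest constant produced over the finitely many steps and the final $q=\infty$ step gives \eqref{eq:q} with $A$ independent of $\rho_0$, $T$, and $\theta$. The main obstacle is purely bookkeeping: verifying at each bootstrap step that the exponents $a=\frac12+\frac d2(\frac1p-\frac1q)$ (from $\nabla G$) and the source-term exponent stay in the range where the Beta integral converges and the powers of $t$ match $t^{-\frac d2(1-\frac1q)}$, i.e. that the windows $[\frac{d}{d-k},\frac{d}{d-(k+1)})$ tile $(1,\infty)$ with no gap — this is where the restriction from HLS bites and why the finitely-many-steps structure is forced. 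One should also note that all constants $C$ coming from \eqref{eq:estimate4} and the Beta functions are independent of $T$, which is what makes the final $A$ uniform; since $\theta$ can be taken small, the $\theta^2$ (and higher) terms are absorbed without a smallness-of-solution contraction argument.
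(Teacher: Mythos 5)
Your overall strategy (bootstrap in $q$ via the Duhamel formula, Young's inequality for the kernel, H\"older and Hardy--Littlewood--Sobolev for $\rho\nabla c$, Beta-function time integrals) matches the paper's. However, there is a concrete gap: once $q\ge \tfrac{d}{d-2}$, the single time-integral you describe cannot close, and no amount of "bookkeeping" fixes it. With Young's inequality one writes
\[
\|\rho_2(t)\|_{L^q}\lesssim \int_0^t \|\nabla G(\cdot,t-s)\|_{L^a}\,\|\rho(s)\nabla c(s)\|_{L^p}\,\di s,
\qquad 1+\tfrac1q=\tfrac1a+\tfrac1p ,
\]
and for the Beta integral $\int_0^1(1-s)^{-\frac12-\frac d2(1-\frac1a)}s^{-\frac12-\frac d2(1-\frac1p)}\di s$ to converge one needs \emph{both} $a<\tfrac{d}{d-1}$ (integrability near $s=t$) \emph{and} $p<\tfrac{d}{d-1}$ (integrability near $s=0$, since $\|\rho\nabla c(s)\|_{L^p}\lesssim \theta^2 s^{-\frac12-\frac d2(1-\frac1p)}$ after H\"older+HLS). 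The relation $\tfrac1a+\tfrac1p=1+\tfrac1q$ then forces $q<\tfrac d{d-2}$. So no matter how much $L^{q'}$-control you have fed in from earlier steps, the $(t-s)$-kernel exponent alone blocks the bootstrap at $q<\tfrac d{d-2}$; the constraint is not merely the HLS gain per step, as you suggest.

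The paper's resolution, which your proposal is missing, is to split the time integral into $\int_0^{t/2}$ and $\int_{t/2}^t$ and choose a \emph{different} Young pair on each half (see Step~3 of their proof): on $[t/2,t]$ one takes $a_1<\tfrac d{d-1}$ small and allows $p_1<\tfrac d{d-2}$, because there $s\ge t/2$ is bounded away from $0$; symmetrically on $[0,t/2]$ one takes $p_2<\tfrac d{d-1}$ small and allows $a_2<\tfrac d{d-2}$, because there $t-s\ge t/2$. Iterating this widening, each split step advances the admissible window by one, which is what really makes the ``finitely many steps'' work. The same issue bites your $q=\infty$ step: for $q=\infty$ a single Young pair requires $\tfrac1a+\tfrac1p=1$ with both $a,p<\tfrac d{d-1}$, which is impossible for $d\ge2$; ``using a source norm $L^p$ with $p>d/2$'' does not help because the $(t-s)$-factor is then not integrable near $s=t$ unless $p>d$, and with $p>d$ the $s$-factor $s^{-\frac12-\frac d2(1-\frac1p)}$ has exponent $\le-1$, so the integral diverges near $0$. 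The paper again handles $q=\infty$ with the $[0,t/2]$ vs.\ $[t/2,t]$ split. In short: the bootstrap skeleton is right, but you need the split of the time integral (with two different Young/H\"older/HLS exponent choices) as an explicit additional device, both to get past $q=\tfrac d{d-2}$ for $d\ge3$ and to reach $q=\infty$ for $d\ge2$.
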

\begin{proof}
Let $\rho_1(t)$ and $\rho_2(t)$ be defined by \eqref{eq:rho12}. Due to \eqref{eq:assumptiona}, we have
\begin{align}\label{eq:Lq1}
\|\rho_1(t)\|_{L^q}\leq \theta t^{-\frac{d}{2}\left(1-\frac{1}{q}\right)},\quad 1\leq q\leq \infty,\quad 0< t\leq T.
\end{align}
Next, we will use three steps to deal with the second term $\rho_2$ and prove \eqref{eq:q}.

\textbf{Step 1.} In this step, we are going to prove that inequality \eqref{eq:q} holds for $1\leq q<\frac{d}{d-1}$ and complete the proof of \eqref{eq:q} for $d=1$. Here, we assume $\frac{d}{d-1}=\infty$ when $d=1$. 

From Young's inequality and \eqref{eq:assumptiona}, we have
\begin{align*}
\|\rho_2(t)\|_{L^q}\leq& C\int_0^t (t-s)^{-\frac{1}{2}-\frac{d}{2}\left(1-\frac{1}{q}\right)}s^{-\frac{1}{2}}\di s\cdot \sup_{0<t\leq T}t^{\frac{1}{2}}\|\rho(t)\|_{L^{\frac{2d}{2d-1}}}^2\nonumber\\
\leq&C\theta^2t^{-\frac{d}{2}\left(1-\frac{1}{q}\right)}\int_0^1 (1-s)^{-\frac{1}{2}-\frac{d}{2}\left(1-\frac{1}{q}\right)}s^{-\frac{1}{2}} \di s.
\end{align*}
For $1\leq q<\frac{d}{d-1}$,  we have 
\[
-\frac{1}{2}-\frac{d}{2}\left(1-\frac{1}{q}\right)> -1,
\]
and hence 
\[
\|\rho_2(t)\|_{L^q}\leq C\theta^2t^{-\frac{d}{2}\left(1-\frac{1}{q}\right)},\quad 1\leq q<\frac{d}{d-1}.
\]
Together with \eqref{eq:Lq1}, we obtain
\begin{align}\label{eq:d1}
\|\rho(t)\|_{L^q(\mathbb{R}^d)}\leq C\theta t^{-\frac{d}{2}(1-\frac{1}{q})}, \quad 1\leq q<\frac{d}{d-1}
\end{align}
for some constant $C$.

When $d=1$, we prove the case  $q=\infty$ as follows.  From Young's inequality, we have
\begin{align*}
\|\rho_2(t)\|_{L^\infty}\leq &\int_0^t\|\nabla G(\cdot,t-s)\|_{L^2}\|\rho(s)\partial_x c(s)\|_{L^2}\di s\\
\leq&C\int_0^t(t-s)^{-\frac{3}{4}}\|\rho(s)\partial_x c(s)\|_{L^2}\di s\leq C\int_0^t(t-s)^{-\frac{3}{4}}\|\rho(s)\|_{L^{4}}^2\di s\\
\leq&C\int_0^t(t-s)^{-\frac{3}{4}} s^{-\frac{3}{4}}\di s\cdot \sup_{0< s\leq t} s^{\frac{3}{4}}\|\rho(s)\|_{L^4}^2\leq C\theta^2 t^{-\frac{1}{2}}.
\end{align*}
Combining \eqref{eq:Lq1}, we obtain \eqref{eq:q} for $d=1$ and $q=\infty$.

\textbf{Step 2.}
In this step, we are going to prove that inequality \eqref{eq:q} holds for $1\leq q<\frac{d}{d-2}$  and complete the proof of \eqref{eq:q} for $d=2$. Here, we assume $\frac{d}{d-2}=\infty$ when $d=2$. 

For any $1\leq q<\frac{d}{d-2}$, there exist $1\leq a,~p<\frac{d}{d-1}$ such that
\[
1+\frac{1}{q}=\frac{1}{a}+\frac{1}{p}.
\]
By Young's convolution inequality, we obtain
\begin{equation*}
\begin{aligned}
\|\rho_2(t)\|_{L^q}\leq \int_0^t\|\nabla G(\cdot,t-s)\ast (\rho(s)\nabla  c(s))\|_{L^q}\di s
\leq C\int_0^t(t-s)^{-\frac{1}{2}-\frac{d}{2}\left(1-\frac{1}{a}\right)}\| {\rho}(s)\nabla c(s)\|_{L^p}\di s.
\end{aligned}
\end{equation*}
Because $1\leq p<\frac{d}{d-1}$, we could find $p< p_1<\frac{d}{d-1}$ and $d<p_2$ such that $\frac{1}{p}=\frac{1}{p_1}+\frac{1}{p_2}$, and then H\"older's inequality implies
\[
\|\rho_2(t)\|_{L^q}\leq C\int_0^t(t-s)^{-\frac{1}{2}-\frac{d}{2}\left(1-\frac{1}{a}\right)}\|\rho(s)\|_{L^{p_1}} \|\nabla c(s)\|_{L^{p_2}}\di s.
\]
Using Hardy-Littlewood-Sobolev inequality with $1+\frac{1}{p_2}=\frac{1}{d}+\frac{1}{p_3}$,  we obtain
\begin{equation*}
\begin{aligned}
\|\rho_2(t)\|_{L^q}\leq C\int_0^t(t-s)^{-\frac{1}{2}-\frac{d}{2}\left(1-\frac{1}{a}\right)}\|\rho(s)\|_{L^{p_1}} \|\rho(s)\|_{L^{p_3}}\di s,
\end{aligned}
\end{equation*}
and the following relation holds
\[
1+\frac{1}{p}=\frac{1}{p_1}+\frac{1}{p_3}+\frac{1}{d}.
\]   
For $1\leq p<\frac{d}{d-1}$, there exists $1<p_1,~~p_3<\frac{d}{d-1}$ satisfying the above relation.  In this case, we apply Step 1 to obtain
\begin{equation}\label{eq:step2}
\begin{aligned}
\|\rho_2(t)\|_{L^q}
\leq &C\theta^2\int_0^t(t-s)^{-\frac{1}{2}-\frac{d}{2}\left(1-\frac{1}{a}\right)}s^{-\frac{1}{2}-\frac{d}{2}\left(1-\frac{1}{p}\right)}\di s\\
\leq& C\theta^2t^{{-\frac{d}{2}\left(1-\frac{1}{q}\right)}}\int_0^1(1-s)^{-\frac{1}{2}-\frac{d}{2}\left(1-\frac{1}{a}\right)}s^{-\frac{1}{2}-\frac{d}{2}\left(1-\frac{1}{p}\right)}\di s.
\end{aligned}
\end{equation}
For $1\leq a,~p<\frac{d}{d-1}$, we have 
\begin{align*}
-\frac{1}{2}-\frac{d}{2}\left(1-\frac{1}{a}\right)>-1
\quad\textrm{and}\quad
-\frac{1}{2}-\frac{d}{2}\left(1-\frac{1}{p}\right)>-1.
\end{align*}
As a consequence, when $1\leq q<\frac{d}{d-2}$, the last integration of \eqref{eq:step2} is finite, and combining \eqref{eq:Lq1}, we obtain
\begin{align*}
\|\rho(t)\|_{L^q(\mathbb{R}^d)}\leq C\theta t^{-\frac{d}{2}(1-\frac{1}{q})},\quad 1\leq q<\frac{d}{d-2}
\end{align*}
for some constant $C$.

When $d=2$, we prove the case  $q=\infty$ as follows. 
Split the integral in $\rho_2(t)$ into two parts:
\begin{equation*}
\begin{aligned}
\|\rho_2(t)\|_{L^\infty}\leq& \int_0^t\|\nabla G(\cdot,t-s)\ast (\rho(s)\nabla  c(s))\|_{L^\infty}\di s\\
=& \int_0^{t/2}\|\nabla G(\cdot,t-s)\ast (\rho(s)\nabla  c(s))\|_{L^\infty}\di s+ \int_{t/2}^t\|\nabla G(\cdot,t-s)\ast (\rho(s)\nabla  c(s))\|_{L^\infty}\di s\\
=&:I_1+I_2.
\end{aligned}
\end{equation*}
For the first term, we have
\begin{align*}
I_1\leq &\int_0^{t/2}\|\nabla G(\cdot,t-s)\|_{L^\infty}\| \rho(s)\nabla  c(s)\|_{L^1}\di s\\
\leq&C\int_0^{t/2}(t-s)^{-\frac{3}{2}}\|\rho(s)\nabla  c(s)\|_{L^1}\di s\leq C\left(\frac{t}{2}\right)^{-\frac{3}{2}}\int_0^{t/2} \|\rho(s)\|^2_{L^{\frac{4}{3}}}\di s\\
\leq&C\theta^2\left(\frac{t}{2}\right)^{-\frac{3}{2}}\int_0^{t/2} s^{-\frac{1}{2}}\di s\leq C\theta^2 t^{-1}.
\end{align*}
For $I_2$, we have
\begin{align*}
I_2\leq &\int_{t/2}^t\|\nabla G(\cdot,t-s)\|_{L^{\frac{4}{3}}} \|\rho(s)\nabla  c(s)\|_{L^{4}}\di s\leq C\int_{t/2}^t (t-s)^{-\frac{3}{4}} \|\rho(s)\|_{L^8}\|\nabla  c(s)\|_{L^{8}}\di s\\
\leq&C\int_{t/2}^t (t-s)^{-\frac{3}{4}} \|\rho(s)\|_{L^8}\|\rho(s)\|_{L^{\frac{8}{5}}}\di s \leq C\theta^2\int_{t/2}^t (t-s)^{-\frac{3}{4}} s^{-\frac{5}{4}}\di s\\
\leq& C\theta^2 \left(\frac{t}{2}\right)^{-\frac{5}{4}}\int_{t/2}^t (t-s)^{-\frac{3}{4}} \di s\leq C\theta^2 t^{-1}.
\end{align*}
Combining the above two estimates for $I_1$, $I_2$ and the estimate \eqref{eq:Lq1}, we obtain \eqref{eq:q} for $d=2$ and $q=\infty$.
	
\textbf{Step 3.}
As we see from Step 2, we could use the results in previous steps to enhance the regularity of the mild solutions. However, the argument for $d\geq 3$ is a little different from Step 1 and Step 2. To be more clear, we provide one more step here. 

In this step, we are going to prove that inequality \eqref{eq:q} holds for $1\leq q<\frac{d}{d-3}$   and complete the proof of \eqref{eq:q} for $d=3$. Here, we assume $\frac{d}{d-3}=\infty$ for $d=3$.

When $1\leq q<\frac{d}{d-3}$, there exist 
\[
1\leq a_1,~p_2<\frac{d}{d-1}~\textrm{ and }~ 1\leq p_1,~a_2<\frac{d}{d-2}
\]
such that $1+\frac{1}{q}=\frac{1}{a_i}+\frac{1}{p_i}$, $i=1,2$. Different from Step 1 and Step 2, we need to separate the integral in $\rho_2(t)$ into two parts. By Young's inequality, we obtain
\begin{equation*}
\begin{aligned}
&\|\rho_2(t)\|_{L^q}\leq \int_0^t\|\nabla G(\cdot,t-s)\ast (\rho(s)\nabla  c(s))\|_{L^q}\di s\\
\leq& C\left[\int_{\frac{1}{2}t}^t(t-s)^{-\frac{1}{2}-\frac{d}{2}\left(1-\frac{1}{a_1}\right)}\| {\rho}(s)\nabla c(s)\|_{L^{p_1}}\di s+\int_0^{\frac{1}{2}t}(t-s)^{-\frac{1}{2}-\frac{d}{2}\left(1-\frac{1}{a_2}\right)}\| {\rho}(s)\nabla c(s)\|_{L^{p_2}}\di s\right]\\
=&:I_1+I_2.
\end{aligned}
\end{equation*}
The main idea is that we only need to choose $1\leq a_1,~p_2<\frac{d}{d-1}$ to make the above two terms integrable, and hence by adjusting $p_1$ and $a_2$, we can adjust the range of $q$.
We will only handle the first term $I_1$, and the proof for $I_2$ is similar.

From H\"older's inequality with $\frac{1}{p_1}=\frac{1}{p_{11}}+\frac{1}{p_{12}}$ and Hardy-Littlewood-Sobolev inequality with $p_{12}>d$ and $1+\frac{1}{p_{12}}=\frac{1}{d}+\frac{1}{p_{13}}$, we have
\begin{align*}
I_1\leq C\int_{\frac{1}{2}t}^t(t-s)^{-\frac{1}{2}-\frac{d}{2}\left(1-\frac{1}{a_1}\right)}\|\rho(s)\|_{L^{p_{11}}}\|\rho(s)\|_{L^{p_{13}}}\di s,
\end{align*}
and the following relation holds
\begin{align*}
1+\frac{1}{p_1}=\frac{1}{p_{11}}+\frac{1}{p_{13}}+\frac{1}{d}.
\end{align*}
To apply the Hardy-Littlewood-Sobolev inequality, we need to choose $p_{13}<\frac{d}{d-1}$. Recall that  $1\leq p_1<\frac{d}{d-2}$. Hence, there  exist $p_1< p_{11}<\frac{d}{d-2}$ and $1\leq p_{13}<\frac{d}{d-1}$ satisfying the above relation.
Therefore, we apply the results in Step 1 and Step 2 to obtain
\begin{align*}
I_1\leq &C\theta^2 \int_{\frac{1}{2}t}^t(t-s)^{-\frac{1}{2}-\frac{d}{2}\left(1-\frac{1}{a_1}\right)}s^{-\frac{1}{2}-\frac{d}{2}(1-\frac{1}{p_1})}\di s\\
=&C\theta^2t^{-\frac{d}{2}(1-\frac{1}{q})} \int_{\frac{1}{2}}^1(1-s)^{-\frac{1}{2}-\frac{d}{2}\left(1-\frac{1}{a_1}\right)}s^{-\frac{1}{2}-\frac{d}{2}(1-\frac{1}{p_1})}\di s
\end{align*}
Because $1\leq a_1<\frac{d}{d-1}$, we have $-\frac{1}{2}-\frac{d}{2}\left(1-\frac{1}{a_1}\right)>-1$ and the above integration is finite. Therefore, $I_1\leq C\theta^2t^{-\frac{d}{2}(1-\frac{1}{q})}$. 

By similar method, we also have  $I_2\leq C\theta^2t^{-\frac{d}{2}(1-\frac{1}{q})}$. Then, combining \eqref{eq:Lq1}, we finish the prove of \eqref{eq:q} for $1\leq q< \frac{d}{d-3}$. When $d=3$, the proof of \eqref{eq:q} for $q=\infty$ is the same as the proof for $d=1$ and $d=2$. This complete the proof of Step 3.

As we  can see from the above steps, we only need to repeat the above process again for $d-3$ times to obtain \eqref{eq:q} for $d>3$.
\end{proof}
\begin{remark} 
Notice that when $\|\rho_0\|_{L^1}$ is small, the constant $A\theta$ in the hypercontractivity estimate \eqref{eq:q} is independent of time $T$.
	
For any initial datum $\rho_0\in L^1(\mathbb{R}^d)$, without the assumption \eqref{eq:assumptiona}, we can also obtain the following estimate by the same method:
\begin{align}\label{eq:qq}
\|\rho(t)\|_{L^q(\mathbb{R}^d)}\leq C t^{-\frac{d}{2}(1-\frac{1}{q})},\quad  1\leq q\leq\infty,~~0<t\leq T.
\end{align}
Here, the constant $C$ depends on $\rho_0,~\theta$, and $T$. This kind of hypercontractive estimates can also be obtained by contraction arguments even for measure initial data; see \cite{bedrossian2014existence}. Moreover, under the assumption for initial data that $\int_{\mathbb{R}^2}\rho_0\log\rho_0\di x<\infty,~~\int_{\mathbb{R}^2}|x|^2\rho_0\di x<\infty$, and $\|\rho_0\|_{L^1}<8\pi$,  the following refined hypercontractivity can be obtained (see \cite[Theorem 1.1]{liu2016refined})
\[
t^{1-\frac{1}{q}}|\log t|^{(1-\e)/q}\|\rho(t)\|_{L^q}\leq C
\] 
for  some constant $C=C(T;C_T;q;\e)$ and any $q>0$, $t\in(0,1]$, $0<\e<1$.

\end{remark}

Next, we are going to improve the regularity and show the decay estimates of the $L^q$ ($1\leq q\leq\infty$) norm of the space-time derivatives of mild solutions to the mPKS equations  \eqref{eq:KS}. We have
\begin{theorem}\label{thm:regularity}
Let $0\leq \rho_0\in L^{1}(\mathbb{R}^d)$ and $T>0$ satisfy \eqref{eq:assumptiona} for some  constant $\theta$. Let $\beta\in\mathbb{N}^d$ be a multi-index and $k\in\mathbb{N}$ be a nonnegative integer. Then,  there exists a constants $C>0$ depending on $\theta$, $\beta$ and $k$ such that the mild solution $\rho(t)$ obtained by Theorem \ref{thm:contraction} satisfies
\begin{align}\label{eq:regularity}
\left\|D_x^\beta\partial_t^k\rho(t)\right\|_{L^q(\mathbb{R}^d)}\leq Ct^{-\frac{|\beta|}{2}-k-\frac{d}{2}(1-\frac{1}{q})},\quad  1\leq q\leq\infty.
\end{align}
Moreover, $\rho(x,t)\geq 0$ for any $t\in [0,T]$ and $x\in\mathbb{R}^d$.
\end{theorem}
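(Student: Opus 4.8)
The plan is to reduce the statement to \textbf{spatial} derivative estimates and then trade each time derivative for two spatial ones via the equation, as suggested in the introduction (the heat semigroup satisfies $\partial_t=\Delta$). Since the constant $C$ is allowed to depend on $\theta$, $\beta$ and $k$, there is no need to track its growth in $|\beta|$ or $k$ — that is precisely what separates this statement from Theorem~\ref{thm1} — and this permits a crude induction. Concretely, I would first prove, for \emph{every} multi-index $\beta$, the bound
$\|D_x^\beta\rho(t)\|_{L^q}\le C_\beta\,t^{-|\beta|/2-\frac d2(1-1/q)}$ for all $1\le q\le\infty$, and then upgrade to time derivatives.

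For the spatial estimates I would induct on $|\beta|$. The base case $\beta=0$ is Theorem~\ref{thm:regularity1}. For the inductive step, apply $D_x^\beta$ to the Duhamel formula \eqref{eq:mild}: the linear term $D_x^\beta(G(\cdot,t)\ast\rho_0)$ is handled by \eqref{eq:estimate7} together with assumption \eqref{eq:assumptiona}. For the nonlinear term split $\int_0^t=\int_0^{t/2}+\int_{t/2}^t$. On $(0,t/2)$ move all of $D_x^\beta$ onto $\nabla G(\cdot,t-s)$, where $t-s\ge t/2$ is bounded away from $0$, using Lemma~\ref{lmm:heatkernelestimate1} and Proposition~\ref{pro:heatkernelestimate2}, and bound $\|\rho(s)\nabla c(s)\|_{L^p}$ by H\"older and Hardy--Littlewood--Sobolev in terms of the $L^r$-norms of $\rho(s)$ already controlled by Theorem~\ref{thm:regularity1}. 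On $(t/2,t)$ use the Leibniz rule to write $D_x^\beta[\nabla G(\cdot,t-s)\ast(\rho\nabla c)]$ as a sum of $[D_x^{\gamma_1}\nabla G(\cdot,t-s)]\ast D_x^{\gamma_2}(\rho(s)\nabla c(s))$, expand $D_x^{\gamma_2}(\rho\nabla c)=\sum_{\mu\le\gamma_2}\binom{\gamma_2}{\mu}D_x^\mu\rho\,D_x^{\gamma_2-\mu}\nabla c$, control each $D_x^{\gamma_2-\mu}\nabla c$ in $L^{p_2}$ by a derivative of $\rho$ in $L^{p_3}$ via Hardy--Littlewood--Sobolev (or a Calder\'on--Zygmund bound when no derivative of $\ln|x|$ is spent), and apply the induction hypothesis to all the factors $D_x^\mu\rho$, $D_x^{\gamma_2-\mu}\rho$ (whose orders are $<|\beta|$). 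One then checks convergence of the $s$-integrals, which — exactly as in the proof of Theorem~\ref{thm:regularity1} — requires integrability conditions of the type $-\tfrac12-\tfrac d2(1-\tfrac1a)>-1$, and hence forces the same finite ladder of intermediate exponents $1<q<\tfrac d{d-1}$, $\tfrac d{d-1}\le q<\tfrac d{d-2},\dots$; after $O(d)$ substeps the claim holds for all $1\le q\le\infty$.

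For time derivatives I would use the equation: $\partial_t\rho=\Delta\rho-\nabla\rho\cdot\nabla c-\rho\,\nabla\!\cdot\!\nabla c$, where $\nabla\!\cdot\!\nabla c$ equals, up to a constant, $-\rho$ when $d=2$ and the Riesz potential $|x|^{-2}\ast\rho$ when $d\ge3$, so that $D_x^\beta\partial_t^j c$ is controlled in $L^q$ by $D_x^\beta\partial_t^j\rho$ through Hardy--Littlewood--Sobolev and Calder\'on--Zygmund estimates. Differentiating the equation $k$ times in $t$ and applying the Leibniz rule expresses $\partial_t^k\rho$ as $\Delta\partial_t^{k-1}\rho$ plus a sum of products $D^{a}\partial_t^i\rho\cdot D^{b}\partial_t^j(\nabla c)$ with $i,j<k$; running an induction on $k$, with the previous step supplying the full family of spatial estimates for each $\partial_t^{k-1}\rho$, yields $\|D_x^\beta\partial_t^k\rho(t)\|_{L^q}\le C_{\beta,k}\,t^{-|\beta|/2-k-\frac d2(1-1/q)}$, each extra $\partial_t$ costing two spatial orders, i.e.\ a factor $t^{-1}$. (Equivalently one can run a single combined induction on $|\beta|+k$ directly on a Duhamel-type representation for $\partial_t^k\rho$, the $\int_0^{t/2}+\int_{t/2}^t$ splitting being what keeps the boundary terms produced by $t$-differentiating the Duhamel integral from blowing up.) Finally, positivity: once $\rho$ and $\nabla c$ are known to be smooth on $\mathbb R^d\times(0,T]$ with the above decay, $\rho$ is a classical solution of the \emph{linear} drift--diffusion equation $\partial_t\rho=\Delta\rho-\nabla\!\cdot\!(\rho\,b)$ with the now-fixed, smooth, spatially decaying drift $b=\nabla c$ and initial datum $\rho(\cdot,0)=\rho_0\ge0$, so the parabolic maximum principle gives $\rho(x,t)\ge0$ on $[0,T]$.

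The main difficulty is not any single estimate but the interplay of three nested inductions — on $|\beta|$, on $k$, and the finite $q$-ladder forced by the Hardy--Littlewood--Sobolev restriction $-\tfrac12-\tfrac d2(1-\tfrac1a)>-1$: one must choose the intermediate Lebesgue exponents in each Leibniz expansion so that every $s$-integral near $s=t$ converges while the target decay rate $t^{-|\beta|/2-k-\frac d2(1-1/q)}$ comes out exactly. Managing this bookkeeping, together with the $\int_0^{t/2}$ versus $\int_{t/2}^t$ split needed to absorb the singularities of the heat kernel and of the Duhamel boundary terms, is the technical heart of the argument; everything else is by now standard.
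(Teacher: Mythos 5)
Your spatial-then-temporal strategy follows the idea the paper flags in the introduction, but the paper's actual proof handles the time derivatives by a different mechanism. It applies $D_x^\beta\partial_t^L$ directly to the Duhamel representation \eqref{eq:mild}, which produces (besides the linear heat term) the boundary terms $\sum_{j=0}^{L-1}D_x^\beta\Delta^{L-1-j}\nabla\cdot\partial_t^{j}(\rho\nabla c)$ and a residual Volterra integral
$I_3=\int_0^t D_x^\beta\Delta^L\nabla G(\cdot,t-s)\ast(\rho\nabla c)(s)\di s$;
it then dispatches $I_3$ in one line by noting that \eqref{eq:mild} identifies $I_3$ with $D_x^\beta\Delta^L\rho(t)-D_x^\beta\Delta^L G(\cdot,t)\ast\rho_0$, both controlled by the already-established $k=0$ case and \eqref{eq:estimate3}. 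Your alternative --- differentiating the PDE $k$ times in $t$ --- produces essentially the same boundary terms but no residual integral, which is arguably cleaner, at the price of first justifying that the mild solution is a classical one (a routine consequence of the $k=0$ smoothing). Both routes are legitimate, and your observation that the constant may depend on $\beta$ and $k$, so no growth bookkeeping is needed, is indeed what distinguishes this theorem from Theorem \ref{thm1}.

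The genuine gap is in your sketch of the $k=0$ spatial estimate \eqref{eq:regularity1} --- precisely the part the paper itself omits and defers to references. On the near-singular piece $\int_{t/2}^t$, the kernel forces essentially all of $D_x^\beta$ onto $\rho\nabla c$: for $|\beta|\ge 2$, keeping even $|\beta|-1$ derivatives on $\nabla G(\cdot,t-s)$ already gives $\|D_x^{\gamma_1}\nabla G(\cdot,t-s)\|_{L^a}\lesssim(t-s)^{-\frac{|\gamma_1|+1}{2}-\frac{d}{2}(1-\frac1a)}$ with exponent $\le -1$, so the $s$-integral diverges at $s=t$. The Leibniz expansion of $D_x^\beta(\rho\nabla c)$ then necessarily contains the top-order terms $D_x^\beta\rho\cdot\nabla c$ and $\rho\cdot D_x^\beta\nabla c$ (the second also reduces to $D_x^\beta\rho$ through Hardy--Littlewood--Sobolev), so your assertion that all factors have order $<|\beta|$ is false. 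Those terms must be moved back to the left-hand side by a smallness/Gronwall absorption, which is exactly what the paper does in the harder Theorem \ref{thm:analytic} via the quantity $\phi(t)$ and by shrinking the upper interval to $[(1-\epsilon)t,t]$ with $\epsilon\sim|\beta|^{-1}$ so that $J_1(\epsilon)$ is small. As written, the $t/2$-split alone does not supply the smallness needed to swallow the top-order contributions, and the induction on $|\beta|$ does not close.
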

When $k=0$, inequalities similar to \eqref{eq:regularity} were obtained for other equations such as the Navier-Stokes \cite{dong2009optimal,giga2002regularizing,sawada2006on,sawada2005analyticity} and the quasi-geostrophic equation  \cite{dong2008spatial}. When $k\neq0$, similar space-time regularity estimate can also obtained; see \cite[Inequality (48)]{dong2008spatial} for the quasi-geostrophic equation. 
When $d=1$, the instantaneous space-time regularity of mild solutions to the mPKS equations \eqref{eq:KS} with fractional Laplacian was proved by a fractional bootstrapping method; see \cite[Theorem 3.2]{gao2020global}.
Here, we will omit the proof for $k=0$ and directly use it to show \eqref{eq:regularity} with $k\neq 0$.
\begin{proof}
\textbf{Step 1.} (Regularity)
Assume we have already obtained inequality \eqref{eq:regularity} for $k=0$ and any $\beta\in\mathbb{N}^d$, i.e.,
\begin{align}\label{eq:regularity1}
\left\|D_x^\beta\rho(t)\right\|_{L^q(\mathbb{R}^d)}\leq Ct^{-\frac{|\beta|}{2}-\frac{d}{2}(1-\frac{1}{q})},\quad  1\leq q\leq\infty.
\end{align}
We use induction for $k$ to prove  \eqref{eq:regularity}. Due to \eqref{eq:regularity1}, inequality \eqref{eq:regularity} holds for any $\beta\in\mathbb{N}^d$ and $k=0$. Assume \eqref{eq:regularity} holds for any $\beta\in\mathbb{N}^d$ and $k< L$ for some positive integer $L$. We are going to prove that inequality \eqref{eq:regularity} also holds for any $\beta\in\mathbb{N}^d$ and  $k=L$.
	
Direct calculation shows that
\begin{equation}\label{eq:spacetimeL}
\begin{aligned}
\|D_x^\beta\partial_t^L\rho(t)\|_{L^q}\leq& \|D_x^\beta\partial_t^L (G(\cdot,t)\ast\rho_0)\|_{L^q}+ \left\|D_x^\beta\partial_t^L \int_0^t\nabla G(\cdot,t-s)\ast\left[\rho(s)\nabla c(s)\right]\di s \right\|_{L^q}\\
\leq& \left\|D_x^\beta\partial_t^L (G(\cdot,t)\ast\rho_0)\right\|_{L^q}+\sum_{j=0}^{L-1}\left\|D_x^\beta\Delta^{L-1-j}\nabla\cdot\partial_t^{j}[\rho(t)\nabla c(t)]\right\|_{L^q}\\
&+\left\|  \int_0^tD_x^\beta\Delta^L\nabla G(\cdot,t-s)\ast\left[\rho(s)\nabla c(s)\right]\di s \right\|_{L^q}=:I_1+I_2+I_3.
\end{aligned}
\end{equation}
Due to \eqref{eq:estimate7}, we have
\begin{align}\label{eq:I1}
I_1\leq Ct^{-\frac{|\beta|}{2}-L-\frac{d}{2}(1-\frac{1}{q})}.
\end{align}
For $I_3$, we use the relation:
\[
-\int_0^tD_x^\beta\Delta^L\nabla G(\cdot,t-s)\ast\left[\rho(s)\nabla c(s)\right]\di s =D_x^\beta\Delta^L\rho(t)-D_x^\beta\Delta^L G(\cdot,t)\ast\rho_0.
\]
From  \eqref{eq:regularity1} and \eqref{eq:estimate3}, we obtain
\begin{equation}\label{eq:spacetimeI13}
I_3\leq Ct^{-\frac{|\beta|}{2}-L-\frac{d}{2}(1-\frac{1}{q})},\quad 1\leq q<\infty, \quad 0<t\leq T.
\end{equation}
Next, we are going to estimate $I_2.$ Direct calculation shows
\begin{equation*}
\begin{aligned}
I_2=&\sum_{j=0}^{L-1}\left\|D_x^\beta\Delta^{L-1-j}\nabla\cdot\partial_t^{j}[\rho(t)\nabla c(t)]\right\|_{L^q}\\
\leq&\sum_{j=0}^{L-1}\sum_{i=0}^{j}\binom{j}{i}\sum_{\substack{|\tilde{\beta}|=|\beta|+1\\+2(L-1-j)}}\sum_{\gamma\leq\tilde{\beta}}\binom{\tilde{\beta}}{\gamma}\left\|D_x^\gamma\partial_t^i\rho(t)D_x^{\tilde{\beta}-\gamma}\partial_t^{j-i}\nabla c(t)\right\|_{L^q}.
\end{aligned}
\end{equation*}
By H\"older's inequality for
\[
\frac{1}{q}=\frac{1}{q_1}+\frac{1}{q_2},
\] 
we have
\begin{align*}
I_2\leq C\|D_x^\gamma\partial_t^i\rho(t)\|_{L^{q_1}}\|D_x^{\tilde{\beta}-\gamma}\partial_t^{j-i}\nabla c(t)\|_{L^{q_2}}.
\end{align*}
Using Hardy-Littlewood-Sobolev inequality  for
\[
1+\frac{1}{q}-\frac{1}{d}=\frac{1}{q_1}+\frac{1}{q'_2},
\]
we have
\begin{equation}\label{eq:spacetimeI21}
\begin{aligned}
I_2\leq &C\|D_x^\gamma\partial_t^i\rho(t)\|_{L^{q_1}}\|D_x^{\tilde{\beta}-\gamma}\partial_t^{j-i}\rho(t)\|_{L^{q'_2}}\\
\leq&C\left(t^{-\frac{|\gamma|}{2}-i-\frac{d}{2}(1-\frac{1}{q_1})}t^{-\frac{|\tilde{\beta}-\gamma|}{2}-(j-i)-\frac{d}{2}(1-\frac{1}{q'_2})}\right)\leq Ct^{-\frac{|\beta|}{2}-L-\frac{d}{2}(1-\frac{1}{q})}.
\end{aligned}
\end{equation}
Combining \eqref{eq:spacetimeL}, \eqref{eq:I1}, \eqref{eq:spacetimeI13}, and \eqref{eq:spacetimeI21} yields \eqref{eq:regularity}.

\textbf{Step 2.} (nonnegativity) The proof is similar to the proof of \cite[Theorem 3.2]{gao2020global} for one dimensional case, and we will skip it here.
	
\end{proof}

\section{Space-time analyticity}\label{sec:analytic}
In this section, we are going to prove the main theorem, i.e., Theorem \ref{thm1}. To this end, let us first give some useful lemmas. 
\begin{lemma}[Lemma 2.1 in \cite{kahane1969}]\label{lmm:multisequences}
Let $\kappa\in\mathbb{N}^d$ be a multi-index. If either $\delta$ or $\epsilon<-\frac{1}{2}$, then
\begin{align}\label{multisequence}
\sum_{\beta+\gamma=\kappa}\frac{\kappa!}{\beta!\gamma!}\left|\beta\right|^{|\beta|+\delta} \left|\gamma\right|^{|\gamma|+\epsilon}\leq \lambda \left|\kappa\right|^{|\kappa|+max\{\delta, \epsilon\}},
\end{align}
where $\lambda$ depends on $\delta$ and $\epsilon$. Here, we used $0^p=1$ for any $p\in\mathbb{R}$.
\end{lemma}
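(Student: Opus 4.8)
The plan is to collapse the multi-index sum to a one-dimensional sum over $j=|\beta|$ and then estimate that with sharp Stirling bounds. Write $n=|\kappa|$. Since $\frac{\kappa!}{\beta!\gamma!}=\binom{\kappa}{\beta}=\prod_{l=1}^{d}\binom{\kappa_l}{\beta_l}$, the coordinatewise binomial theorem gives $\sum_{\beta\le\kappa,\,|\beta|=j}\binom{\kappa}{\beta}=[x^{j}]\prod_{l}(1+x)^{\kappa_l}=\binom{n}{j}$ for every $0\le j\le n$, so the left-hand side of \eqref{multisequence} equals
\[
S:=\sum_{j=0}^{n}\binom{n}{j}\,j^{\,j+\delta}(n-j)^{\,n-j+\epsilon}.
\]
The summand is symmetric under $(j,\delta)\leftrightarrow(n-j,\epsilon)$, so I may assume $\epsilon<-\tfrac12$. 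The two boundary terms $j=0$ and $j=n$ equal $n^{\,n+\epsilon}$ and $n^{\,n+\delta}$ (using $0^{p}=1$), each $\le n^{\,n+\max\{\delta,\epsilon\}}$, so only $1\le j\le n-1$ needs work.

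For that range I would use the two-sided Stirling bound $c_1\sqrt{m}(m/e)^{m}\le m!\le c_2\sqrt{m}(m/e)^{m}$, which gives
\[
\binom{n}{j}\le C_0\sqrt{\tfrac{n}{j(n-j)}}\;\frac{n^{n}}{j^{\,j}(n-j)^{\,n-j}},\qquad 1\le j\le n-1,
\]
so the $j$-th term of $S$ is at most $C_0\,n^{\,n+1/2}\,j^{\,\delta-1/2}(n-j)^{\,\epsilon-1/2}$. Hence everything reduces to the elementary estimate
\[
\sum_{j=1}^{n-1}j^{\,a}(n-j)^{\,b}\le\lambda_{a,b}\,n^{\max\{a,b\}}\qquad\text{whenever } b<-1,
\]
applied with $a=\delta-\tfrac12$ and $b=\epsilon-\tfrac12<-1$; reinserting the factor $n^{\,n+1/2}$ then yields $S\le\lambda\,n^{\,n+\max\{\delta,\epsilon\}}$.

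To prove that elementary estimate I would split at $j=\lfloor n/2\rfloor$. On $1\le j\le n/2$ one has $(n-j)^{b}\le(n/2)^{b}$ (as $b<0$), while $\sum_{j=1}^{\lfloor n/2\rfloor}j^{a}$ is $\le Cn^{a+1}$, $\le C\log n$, or $\le C$ according as $a>-1$, $a=-1$, or $a<-1$; in each case the product with $n^{b}$ is $\le\lambda_{a,b}\,n^{\max\{a,b\}}$, because $b<-1$ makes, e.g., $a+1+b<a$. On $n/2\le j\le n-1$ one bounds $j^{a}\le 2^{|a|}n^{a}$ and uses $\sum_{i\ge1}i^{b}=\zeta(-b)<\infty$. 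The constants depend only on $a,b$, hence $\lambda$ in \eqref{multisequence} depends only on $\delta,\epsilon$; the finitely many small $n$ (where the Stirling step is unavailable) are absorbed by enlarging $\lambda$.

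The step I expect to be the real obstacle is the passage through Stirling: only the \emph{sharp} form, with its gain of $n^{-1/2}$ over the crude inequality $\binom{n}{j}\le n^{n}/(j^{j}(n-j)^{n-j})$, makes the threshold $-\tfrac12$ available. With the crude bound the terms near $j=n$ already behave like $n^{\,n+\delta}i^{\epsilon}$ (with $i=n-j$), which would force $\epsilon\le-1$; the extra factor $i^{-1/2}$ supplied by Stirling is exactly what turns $\sum i^{\epsilon}$ into the convergent $\sum i^{\epsilon-1/2}$. Everything else is bookkeeping — tracking the convention $0^{p}=1$ at the endpoints and checking that no constant secretly depends on $n$.
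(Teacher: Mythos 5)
Your proof is correct. The paper does not supply its own argument for this lemma --- it is cited verbatim as Lemma~2.1 of Kahane \cite{kahane1969} --- so there is nothing in the paper itself to compare your approach against. Your route is sound: the Vandermonde identity $\sum_{\beta\le\kappa,\,|\beta|=j}\binom{\kappa}{\beta}=\binom{|\kappa|}{j}$ collapses the multi-index sum exactly (with no $d$-dependent constant) to the scalar sum $S=\sum_{j=0}^n\binom nj\,j^{j+\delta}(n-j)^{n-j+\epsilon}$; the two-sided Stirling bound then reduces the interior terms to $C_0\,n^{n+1/2}j^{\delta-1/2}(n-j)^{\epsilon-1/2}$; the endpoints $j=0,n$ give $n^{n+\epsilon}+n^{n+\delta}\le 2n^{n+\max\{\delta,\epsilon\}}$ under the convention $0^p=1$; and the scalar estimate $\sum_{j=1}^{n-1}j^a(n-j)^b\le\lambda_{a,b}\,n^{\max\{a,b\}}$ for $b<-1$ is verified correctly by the split at $n/2$ in all three subcases $a>-1$, $a=-1$ (where $n^{b+1}\log n\to0$), and $a<-1$. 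Reinserting $n^{n+1/2}$ gives $n^{n+1/2}\cdot n^{\max\{\delta,\epsilon\}-1/2}=n^{n+\max\{\delta,\epsilon\}}$, as required, with $\lambda$ depending only on $\delta,\epsilon$. You also correctly pinpoint the role of the $\sqrt{n/(j(n-j))}$ factor from Stirling: it is exactly what shifts the convergence threshold from $\epsilon<-1$ (crude bound) to $\epsilon<-\tfrac12$.
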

\begin{lemma}[Lemma 3.3 in \cite{donghongjie2020jfa}]\label{eq:donglemm} 
Let $f$ and $g$ be two smooth function on $\mathbb{R}$, for any integer $k\geq 1$, we have
\begin{multline}\label{eq:timeestimate}
\partial_t^k\left(t^kf(t)g(t)\right)=\sum_{j=0}^k\binom{k}{j}\partial_t^j\left(t^jf(t)\right)\partial_t^{k-j}\left(t^{k-j}g(t)\right)\\
-k\sum_{j=0}^{k-1}\binom{k-1}{j}\partial_t^j\left(t^jf(t)\right)\partial_t^{k-1-j}\left(t^{k-1-j}g(t)\right).
\end{multline}
\end{lemma}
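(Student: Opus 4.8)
The plan is to recast \eqref{eq:timeestimate} in operator form and prove it by induction on $k$. Write $T_k[\phi]:=\partial_t^k\big(t^k\phi(t)\big)$ for a smooth function $\phi$, and set
\[
R_k:=\sum_{j=0}^k\binom{k}{j}T_j[f]\,T_{k-j}[g],\qquad R_{-1}:=0 .
\]
Since $\sum_{j=0}^{k-1}\binom{k-1}{j}T_j[f]T_{k-1-j}[g]=R_{k-1}$, the right-hand side of \eqref{eq:timeestimate} is exactly $R_k-kR_{k-1}$, so the claim becomes the compact identity $T_k[fg]=R_k-kR_{k-1}$, which we check separately for $k=0,1$ and then prove by induction.

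First I would derive a first-order recursion for $T_k$ itself. From $t^k\phi=t\cdot(t^{k-1}\phi)$ and the one-variable Leibniz rule $\partial_t^k(t\psi)=t\,\partial_t^k\psi+k\,\partial_t^{k-1}\psi$ one gets $T_k[\phi]=t\,\partial_t^k(t^{k-1}\phi)+kT_{k-1}[\phi]$, while directly $\partial_t\big(tT_{k-1}[\phi]\big)=T_{k-1}[\phi]+t\,\partial_t^k(t^{k-1}\phi)$. Eliminating the common term $t\,\partial_t^k(t^{k-1}\phi)$ yields
\[
T_k[\phi]=\partial_t\big(t\,T_{k-1}[\phi]\big)+(k-1)T_{k-1}[\phi],\qquad T_0[\phi]=\phi .
\]
Next I would compute $\partial_t(tR_{k-1})$ in the same vocabulary. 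For $u=T_j[f]$ and $v=T_{k-1-j}[g]$, expand $\partial_t(tuv)=uv+(tu')v+u(tv')$ and substitute $tu'=\partial_t(tu)-u=T_{j+1}[f]-(j+1)T_j[f]$ and $tv'=\partial_t(tv)-v=T_{k-j}[g]-(k-j)T_{k-1-j}[g]$, using the recursion just obtained; a short cancellation gives
\[
\partial_t\big(t\,T_j[f]T_{k-1-j}[g]\big)=T_{j+1}[f]T_{k-1-j}[g]+T_j[f]T_{k-j}[g]-k\,T_j[f]T_{k-1-j}[g] .
\]
Multiplying by $\binom{k-1}{j}$, summing over $j$, reindexing the first group by $i=j+1$, and collapsing $\binom{k-1}{i-1}+\binom{k-1}{i}=\binom{k}{i}$ (Pascal's rule) turns the first two groups into $R_k$ and the third into $-kR_{k-1}$, i.e.
\[
\partial_t\big(tR_m\big)=R_{m+1}-(m+1)R_m,\qquad m\ge0 .
\]

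Finally I would run the induction on $k$. The cases $k=0,1$ are immediate. Assuming $T_{k-1}[fg]=R_{k-1}-(k-1)R_{k-2}$, put $S_k:=R_k-kR_{k-1}$; using the two recursions above,
\begin{align*}
\partial_t\big(tS_{k-1}\big)+(k-1)S_{k-1}
&=\partial_t\big(t(R_{k-1}-(k-1)R_{k-2})\big)+(k-1)\big(R_{k-1}-(k-1)R_{k-2}\big)\\
&=R_k-kR_{k-1}=S_k .
\end{align*}
Thus $S_k=\partial_t\big(tS_{k-1}\big)+(k-1)S_{k-1}$; combining this with the inductive hypothesis $S_{k-1}=T_{k-1}[fg]$ and the recursion for $T_k$ gives $S_k=\partial_t\big(tT_{k-1}[fg]\big)+(k-1)T_{k-1}[fg]=T_k[fg]$, which is \eqref{eq:timeestimate}.

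The only genuinely delicate point is the middle step: getting the pointwise identity for $\partial_t\big(t\,T_j[f]T_{k-1-j}[g]\big)$ and the subsequent Pascal telescoping exactly right; everything else is routine bookkeeping. An alternative but messier route is to expand both sides using $\partial_t^k(t^k\phi)=\sum_m\binom{k}{m}\frac{k!}{m!}\,t^m\phi^{(m)}$ together with the Leibniz rule for $(fg)^{(m)}$, reducing the statement to a single binomial identity for the coefficient of $t^{a+b}f^{(a)}g^{(b)}$; I would prefer the recursive argument above as cleaner.
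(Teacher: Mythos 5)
Your argument is correct, and the paper itself offers no proof of this lemma---it simply cites it as Lemma~3.3 of Dong--Zhang \cite{donghongjie2020jfa}---so there is nothing in this manuscript to compare against. Your recursive proof is clean and self-contained: the operator recursion $T_k[\phi]=\partial_t\bigl(tT_{k-1}[\phi]\bigr)+(k-1)T_{k-1}[\phi]$ is correctly derived from the elementary Leibniz identity $\partial_t^k(t\psi)=t\partial_t^k\psi+k\partial_t^{k-1}\psi$; the pointwise identity $\partial_t\bigl(tT_j[f]T_{k-1-j}[g]\bigr)=T_{j+1}[f]T_{k-1-j}[g]+T_j[f]T_{k-j}[g]-kT_j[f]T_{k-1-j}[g]$ follows correctly from substituting $tu'=\partial_t(tu)-u$ and the recursion; the Pascal telescoping $\binom{k-1}{i-1}+\binom{k-1}{i}=\binom{k}{i}$ yields $\partial_t(tR_m)=R_{m+1}-(m+1)R_m$ exactly as claimed; and the induction step $S_k=\partial_t(tS_{k-1})+(k-1)S_{k-1}=T_k[fg]$ closes correctly using $S_{k-1}=T_{k-1}[fg]$. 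The base cases $k=0,1$ also check out. This is a complete and valid proof of the identity.
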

We have the following a priori estimates for the space-time derivatives of mild solutions to the mPKS equations \eqref{eq:KS}.
\begin{proposition}\label{pro:spacetime}
Let  $\beta\in\mathbb{N}^d$ be a multi-index and $L\geq 2$, $k$ be nonnegative integers.  For $0< |\beta|+k\leq L-1$ and $1<q<\infty$, if there exist some constants $0<\delta<1$, $M>0$ independent of $|\beta|$ and $k$ such that 
\begin{align}\label{fineestimate}
\left\|D_x^\beta\partial_t^k\left(t^k\rho(t)\right)\right\|_{L^q}\leq M^{|\beta|+k-\delta}\left(|\beta|+k\right)^{|\beta|+k-1}t^{-\frac{|\beta|}{2}-\frac{d}{2}\left(1-\frac{1}{q}\right)},~~0< |\beta|+k\leq L-1,
\end{align}
then we have 
\begin{align}\label{fineestimate1}
\left\|D_x^\beta\partial_t^k\left(t^k\rho(t)\nabla c(t)\right)\right\|_{L^p}\leq N(1+M^\delta)M^{|\beta|+k-2\delta}\left(|\beta|+k\right)^{|\beta|+k-1}t^{-\frac{|\beta|+1}{2}-\frac{d}{2}\left(1-\frac{1}{p}\right)}
\end{align}
for any $1\leq p<\infty$ and $0<|\beta|+k\leq L-1$, and  we also have
\begin{multline}\label{fineestimate2}
\left\|D_x^\beta\partial_t^k\left(t^k\rho(t)\nabla c(t)\right)\right\|_{L^p}\leq NM^{|\beta|+k-2\delta}\left(|\beta|+k\right)^{|\beta|+k-1}t^{-\frac{|\beta|+1}{2}-\frac{d}{2}\left(1-\frac{1}{p}\right)}\\
+\left\|D_x^\beta\partial_t^k\left(t^k\rho(t)\right)\nabla c(t)\right\|_{L^p}+\left\|\rho(t)D_x^\beta\partial_t^k\left(t^k\nabla c(t)\right)\right\|_{L^p}
\end{multline}
for  any $1\leq p<\infty$  and $|\beta|+k=L$. Here, $N>0$ is a constant  independent of $M$, $\beta$ and $k$.
\end{proposition}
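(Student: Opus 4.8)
The plan is to expand $D_x^\beta\partial_t^k\big(t^k\rho(t)\nabla c(t)\big)$ by the spatial Leibniz rule together with the weighted Leibniz identity of Lemma~\ref{eq:donglemm} (applied with $f=D_x^\gamma\rho$, $g=D_x^{\beta-\gamma}\nabla c$), which expresses it as a sum over $\gamma\le\beta$ and $0\le j\le k$ of the products
\[
\binom{\beta}{\gamma}\binom{k}{j}\,\partial_t^{j}\!\big(t^{j}D_x^{\gamma}\rho\big)\,\partial_t^{k-j}\!\big(t^{k-j}D_x^{\beta-\gamma}\nabla c\big),
\]
plus an analogous correction sum in which $k-j$ is replaced by $k-1-j$, $\binom{k}{j}$ by $\binom{k-1}{j}$, and with an extra prefactor $-k$. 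The structural point that makes everything work is that, for any multi-index $\mu$ and any $m\ge 0$, spatial differentiation, multiplication by $t^{m}$ and $\partial_t$ all commute with the $x$-convolution defining $c$; hence $D_x^{\mu}\partial_t^{m}\big(t^{m}\nabla c\big)=K\ast D_x^{\mu}\partial_t^{m}\big(t^{m}\rho\big)$, where $K=-\tfrac{1}{d\pi}\tfrac{x}{|x|^{2}}$ is the ($-1$ homogeneous) kernel of $\nabla c$, so by the Hardy--Littlewood--Sobolev inequality (cf.\ \eqref{eq:HLSI}) any factor carrying $\nabla c$ can be traded for the corresponding factor carrying $\rho$, at the price of raising the Lebesgue exponent from some $a\in(1,\tfrac{d}{d-1})$ to $b$ with $\tfrac1b=\tfrac1a-\tfrac{d-1}{d}$. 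I would then separate the two \emph{endpoint} terms $D_x^\beta\partial_t^k(t^k\rho)\,\nabla c$ and $\rho\,D_x^\beta\partial_t^k(t^k\nabla c)$ (arising from $(\gamma,j)=(\beta,k)$ and $(\gamma,j)=(0,0)$ in the first sum) from all the remaining \emph{bulk} terms.

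For a bulk term one uses H\"older's inequality with $\tfrac1p=\tfrac1r+\tfrac1b$ to split it into $\big\|D_x^\gamma\partial_t^j(t^j\rho)\big\|_{L^r}$ times $\big\|D_x^{\beta-\gamma}\partial_t^{k-j}(t^{k-j}\nabla c)\big\|_{L^b}$; converting the second factor to $\rho$ by the commutation identity and HLS and then applying the hypothesis \eqref{fineestimate} to both factors — which is legitimate because in a bulk term each differentiated factor has total order in $\{1,\dots,L-1\}$, even when $|\beta|+k=L$ — the two powers of $M$ multiply to $M^{|\beta|+k-2\delta}$, and a short computation using $\tfrac1b=\tfrac1a-\tfrac{d-1}{d}$ and $\tfrac1r+\tfrac1b=\tfrac1p$ shows the powers of $t$ combine exactly to $t^{-\frac{|\beta|+1}{2}-\frac d2(1-1/p)}$. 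The correction sum is handled identically: there the combined derivative order is $|\beta|+k-1\le L-1$, so \eqref{fineestimate} (or, for an un-differentiated $\rho$ or $\nabla c$ factor, the hypercontractivity estimate of Theorem~\ref{thm:regularity1}, which carries no power of $M$) applies, and the prefactor $k\le|\beta|+k$ is absorbed. It then remains to sum the combinatorial weights $\binom{\beta}{\gamma}\binom{k}{j}\,(|\gamma|+j)^{|\gamma|+j-1}(|\beta|-|\gamma|+k-j)^{|\beta|-|\gamma|+k-j-1}$; grouping by the value $i=|\gamma|+j$ and using $\sum_{\gamma\le\beta,\,|\gamma|=m}\binom{\beta}{\gamma}=\binom{|\beta|}{m}$ together with Vandermonde's identity collapses the double sum to $\sum_{i=0}^{n}\binom{n}{i}\,i^{\,i-1}(n-i)^{\,n-i-1}$ with $n=|\beta|+k$, which is $\le\lambda\,n^{\,n-1}$ by the scalar case ($\delta=\epsilon=-1$) of Lemma~\ref{lmm:multisequences}. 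This yields $NM^{|\beta|+k-2\delta}(|\beta|+k)^{|\beta|+k-1}t^{-\frac{|\beta|+1}{2}-\frac d2(1-1/p)}$ for the bulk part, which is precisely the first term on the right of both \eqref{fineestimate1} and \eqref{fineestimate2}.

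This already proves \eqref{fineestimate2} for $|\beta|+k=L$: the two endpoint terms are exactly the remaining two terms displayed on its right-hand side. For \eqref{fineestimate1}, i.e.\ $0<|\beta|+k\le L-1$, the endpoint terms are themselves estimable. In $D_x^\beta\partial_t^k(t^k\rho)\,\nabla c$ one uses H\"older, bounds $\nabla c$ in $L^{q_2}$ by $\|\rho\|_{L^{q_3}}$ (HLS) and then by $A\theta\,t^{-\frac d2(1-1/q_3)}$ (Theorem~\ref{thm:regularity1}), which is independent of $M$, while $D_x^\beta\partial_t^k(t^k\rho)$ is controlled by \eqref{fineestimate}; the exponent arithmetic again produces $t^{-\frac{|\beta|+1}{2}-\frac d2(1-1/p)}$, and only one factor contributes a power of $M$, namely $M^{|\beta|+k-\delta}=M^{\delta}\cdot M^{|\beta|+k-2\delta}$ — this is the source of the $M^\delta$ in $N(1+M^\delta)$. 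The other endpoint term $\rho\,D_x^\beta\partial_t^k(t^k\nabla c)$ is treated symmetrically after commuting $D_x^\beta\partial_t^k(t^k\cdot)$ through the convolution for $c$. Adding the bulk contribution (of order $M^{|\beta|+k-2\delta}$) and the two endpoint contributions (of order $M^{|\beta|+k-\delta}$) gives $N(1+M^{\delta})M^{|\beta|+k-2\delta}(|\beta|+k)^{|\beta|+k-1}t^{-\frac{|\beta|+1}{2}-\frac d2(1-1/p)}$, as claimed.

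The main obstacle is bookkeeping rather than any single hard estimate: one must verify that in every surviving (bulk and correction) term each differentiated factor really has total order in the range where \eqref{fineestimate} is available, isolate precisely the two top-order endpoint terms that must be carried, and check that the H\"older and Hardy--Littlewood--Sobolev exponents can be chosen in their admissible ranges for every $1\le p<\infty$ and every dimension $d$ — the case $p=1$ forcing a choice such as $1<q_1<\tfrac{d}{d-1}$, and $d=1$ requiring the Hilbert transform in place of HLS. Once the exponents are fixed, the arithmetic of the powers of $t$ and $M$ and the Kahane-type summation are routine.
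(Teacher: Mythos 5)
Your proposal is correct and follows essentially the same route as the paper's proof: apply Lemma~\ref{eq:donglemm} together with the spatial Leibniz rule, isolate the two top-order endpoint terms, trade every $\nabla c$ factor for a $\rho$ factor via Hardy--Littlewood--Sobolev (using that $D_x^\mu\partial_t^m(t^m\,\cdot\,)$ commutes with the convolution defining $c$), apply the hypothesis \eqref{fineestimate} on the bulk/correction terms and hypercontractivity (Theorem~\ref{thm:regularity1}) on undifferentiated factors, and close with the Kahane estimate of Lemma~\ref{lmm:multisequences}. The only departures are cosmetic: you reduce the combinatorial sum to the scalar case of Lemma~\ref{lmm:multisequences} by Vandermonde rather than applying it directly to the $(d+1)$-dimensional multi-index $\bar\beta=(\beta,k)$ as the paper does, and you explicitly flag that $d=1$ requires $L^p$-boundedness of the Hilbert transform in place of HLS — a detail the paper's argument also needs but does not spell out.
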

\begin{proof}
From the identity \eqref{eq:timeestimate}, the following inequality holds
\begin{equation}\label{eq:timeestimatenorm}
\begin{aligned}
\left\|D_x^\beta\partial_t^k\left[t^k\rho(t)\nabla c(t)\right]\right\|_{L^p}\leq\sum_{j=0}^k\binom{k}{j}\left\|D_x^\beta\left[\partial_t^j\left(t^j\rho(t)\right)\partial_t^{k-j}\left(t^{k-j}\nabla c(t)\right)\right]\right\|_{L^p}\\
+k\sum_{j=0}^{k-1}\binom{k-1}{j}\left\|D_x^\beta\left[\partial_t^j\left(t^j\rho(t)\right)\partial_t^{k-1-j}\left(t^{k-1-j}\nabla c(t)\right)\right]\right\|_{L^p}\\
\leq\sum_{0\leq\gamma\leq\beta}\binom{\beta}{\gamma}\sum_{j=0}^k\binom{k}{j}\left\| D_x^{\gamma}\partial_t^j\left(t^j\rho(t)\right)D_x^{\beta-\gamma}\partial_t^{k-j}\left(t^{k-j}\nabla c(t)\right)\right\|_{L^p}\\
+k\sum_{0\leq\gamma\leq\beta}\binom{\beta}{\gamma}\sum_{j=0}^{k-1}\binom{k-1}{j}\left\|D_x^\gamma\partial_t^j\left(t^j\rho(t)\right)D_x^{\beta-\gamma}\partial_t^{k-1-j}\left(t^{k-1-j}\nabla c(t)\right)\right\|_{L^p}.%=:I_1+I_2.
\end{aligned}
\end{equation}
	
\textbf{Step 1:} Assume $0< |\beta|+k\leq L-1$ and we are going to prove \eqref{fineestimate1} in this step.  
From the assumption \eqref{fineestimate}, H\"oder's inequality for $\frac{1}{p}=\frac{1}{p_1}+\frac{1}{p_2},~~d<p_2<\infty$ and Hardy-Littlewood-Sobolev inequality with $1+\frac{1}{p_2}=\frac{1}{d}+\frac{1}{p_3}$, we obtain from \eqref{eq:timeestimatenorm} that
\begin{equation*}
\begin{aligned}
&\left\|D_x^\beta\partial_t^k\left[t^k\rho(t)\nabla c(t)\right]\right\|_{L^p}\\
\leq&C\sum_{\substack{|\gamma|+j\neq 0,\\|\gamma|+j\neq|\beta|+k}}\binom{k}{j}\binom{\beta}{\gamma}\left\|D_x^\gamma\partial_t^j\left(t^j\rho(t)\right)\right\|_{L^{p_1}}\left\|D_x^{\beta-\gamma}\partial_t^{k-j}\left(t^{k-j}\rho(t)\right)\right\|_{L^{p_3}}\\
&+Ck\sum_{\substack{|\gamma|+j\neq 0,\\|\gamma|+j\neq|\beta|+k-1}}\binom{k-1}{j} \binom{\beta}{\gamma}\left\|D_x^\gamma\partial_t^j\left(t^j\rho(t)\right)\right\|_{L^{p_1}}\left\|D_x^{\beta-\gamma}\partial_t^{k-1-j}\left(t^{k-1-j}\rho(t)\right)\right\|_{L^{p_3}}\\
&+C\|\rho(t)\|_{L^{p_1}}\left\|D_x^{\beta}\partial_t^{k}\left(t^{k}\rho(t)\right)\right\|_{L^{p_3}}+C\|D_x^\beta\partial_t^k\left(t^k\rho(t)\right)\|_{L^{p_1}}\left\|\rho(t)\right\|_{L^{p_3}}\\
&+Ck\|\rho(t)\|_{L^{p_1}}\left\|D_x^{\beta}\partial_t^{k-1}\left(t^{k-1}\rho(t)\right)\right\|_{L^{p_3}}+Ck\|D_x^\beta\partial_t^{k-1}\left(t^{k-1}\rho(t)\right)\|_{L^{p_1}}\left\|\rho(t)\right\|_{L^{p_3}}\\
\leq&CM^{|\beta|+k-2\delta}\sum_{\substack{|\gamma|+j\neq 0,\\|\gamma|+j\neq|\beta|+k}}\binom{k}{j}\binom{\beta}{\gamma}\times\left(|\gamma|+j\right)^{|\gamma|+j-1}\left[|\beta-\gamma|+(k-j)\right]^{|\beta-\gamma|+(k-j)-1}t^{-\frac{|\beta|+1}{2}-\frac{d}{2}\left(1-\frac{1}{p}\right)}\\
&+CkM^{|\beta|+k-1-2\delta}k\sum_{\substack{|\gamma|+j\neq 0,\\|\gamma|+j\neq|\beta|+k-1}}\binom{k-1}{j}\binom{\beta}{\gamma}\\
&\qquad \times\left(|\gamma|+j\right)^{|\gamma|+j-1}\left[|\beta-\gamma|+(k-1-j)\right]^{|\beta-\gamma|+(k-1-j)-1}t^{-\frac{|\beta|+1}{2}-\frac{d}{2}\left(1-\frac{1}{p}\right)}\\
&+CM^{|\beta|+k-\delta}(|\beta|+k)^{|\beta|+k-1}t^{-\frac{|\beta|+1}{2}-\frac{d}{2}\left(1-\frac{1}{p}\right)}+CkM^{|\beta|+k-1-\delta}(|\beta|+k-1)^{|\beta|+k-2}t^{-\frac{|\beta|+1}{2}-\frac{d}{2}\left(1-\frac{1}{p}\right)}
\end{aligned}
\end{equation*}
for some constant $C$. Consider $\bar\beta=(\beta_1,\cdots,\beta_d,k)\in\mathbb{N}^{d+1}~\textrm{and}~ \tilde\beta=(\beta_1,\cdots,\beta_d,k-1)\in\mathbb{N}^{d+1}$. Hence, $|\bar\beta|=|\beta|+k$ and $|\tilde\beta|=|\beta|+k-1$, and combining Lemma \ref{lmm:multisequences} gives
\begin{small}
\begin{align*}
&\sum_{\substack{|\gamma|+j\neq 0,\\|\gamma|+j\neq|\beta|+k}}\binom{k}{j}\binom{\beta}{\gamma}\left(|\gamma|+j\right)^{|\gamma|+j-1}\left[|\beta-\gamma|+(k-j)\right]^{|\beta-\gamma|+(k-j)-1}\\
\leq &\sum_{j=0}^k\sum_{0\leq\gamma\leq\beta}\frac{\beta_1!\cdots\beta_d!k!}{(\beta_1-\gamma_1)!\gamma_1!\cdots(\beta_d-\gamma_d)!\gamma_d!(k-j)!j!}\left(|\gamma|+j\right)^{|\gamma|+j-1}\left[|\beta-\gamma|+(k-j)\right]^{|\beta-\gamma|+(k-j)-1}\\
=&\sum_{0\leq\bar\gamma\leq\bar\beta}\binom{\bar\beta}{\bar\gamma}\left|\bar\gamma\right|^{|\bar\gamma|-1}\left|\bar\beta-\bar\gamma\right|^{|\bar\beta-\bar\gamma|-1}\leq\lambda|\bar\beta|^{|\bar\beta|-1},
\end{align*}
\end{small}
and
\begin{multline*}
\sum_{\substack{|\gamma|+j\neq 0,\\|\gamma|+j\neq|\beta|+k-1}}\binom{k-1}{j}\binom{\beta}{\gamma}\left(|\gamma|+j\right)^{|\gamma|+k-1}\left[|\beta-\gamma|+(k-1-j)\right]^{|\beta-\gamma|+(k-1-j)-1}\\
\leq \sum_{0\leq\tilde\gamma\leq\tilde\beta}\binom{\tilde\beta}{\tilde\gamma}\left|\tilde\gamma\right|^{|\tilde\gamma|-1}\left|\tilde\beta-\tilde\gamma\right|^{|\tilde\beta-\tilde\gamma|-1}\leq\lambda|\tilde\beta|^{|\tilde\beta|-1}.
\end{multline*}
Combining all the above inequalities, we obtain
\begin{align*}
&\left\|D_x^\beta\partial_t^k\left[t^k\rho(t)\nabla c(t)\right]\right\|_{L^p}\leq C\lambda M^{|\beta|+k-2\delta}(|\beta|+k)^{|\beta|+k-1}t^{-\frac{|\beta|+1}{2}-\frac{d}{2}\left(1-\frac{1}{p}\right)}\\
&+C\lambda k M^{|\beta|+k-1-2\delta}k(|\beta|+k-1)^{|\beta|+k-2}t^{-\frac{|\beta|+1}{2}-\frac{d}{2}\left(1-\frac{1}{p}\right)}\\
&+CM^{|\beta|+k-\delta}(|\beta|+k)^{|\beta|+k-1}t^{-\frac{|\beta|+1}{2}-\frac{d}{2}\left(1-\frac{1}{p}\right)}+CkM^{|\beta|+k-1-\delta}(|\beta|+k-1)^{|\beta|+k-2}t^{-\frac{|\beta|+1}{2}-\frac{d}{2}\left(1-\frac{1}{p}\right)}\\
&\leq(2C\lambda+C+CM^\delta)M^{|\beta|+k-2\delta}\left(|\beta|+k\right)^{|\beta|+k-1}t^{-\frac{|\beta|+1}{2}-\frac{d}{2}\left(1-\frac{1}{p}\right)}.
\end{align*}
Let $N=2\lambda C+C$ and we obtain \eqref{fineestimate1}.

\textbf{Step 2:} Consider the case: $|\beta|+k=L$. Notice that the terms for $|\gamma|=j=0$ and $\gamma=\beta,~~j=k$ in the summation $I_1$ of \eqref{eq:timeestimatenorm} are 
\[
\left\|D_x^\beta\partial_t^k\left(t^k\rho(t)\right)\nabla c(t)\right\|_{L^p}+\left\|\rho(t)D_x^\beta\partial_t^k\left(t^k\nabla c(t)\right)\right\|_{L^p}.
\]
Take these two terms out and by the same arguments as in Step 1 for the rest terms in \eqref{eq:timeestimatenorm}, we obtain \eqref{fineestimate2}.
\end{proof}

Now, we are ready to state and prove one of the main results in this paper, which includes Theorem \ref{thm1} (i):
\begin{theorem}\label{thm:analytic}
Let $0\leq \rho_0\in L^{1}(\mathbb{R}^d)$ and $T>0$ satisfy \eqref{eq:assumptiona} for some small constant $\theta$. Let $\rho(t)$ be the mild solution to the mPKS equations  \eqref{eq:KS} in $[0,T]$, $\beta\in\mathbb{N}^d$ be a multi-index and $k\in\mathbb{N}$ be a nonnegative integer. 
Then there exists a constant  $M$ independent of $\beta$, $k$, and $T$, such that 
\begin{align}\label{eq:analytic}
\left\|D_x^\beta\partial_t^k\rho(t)\right\|_{L^q(\mathbb{R}^d)}\leq M^{|\beta|+k}\left(|\beta|+k\right)^{|\beta|+k}t^{-\frac{|\beta|}{2}-k-\frac{d}{2}(1-\frac{1}{q})}
\end{align}
holds for $1\leq q\leq\infty$, $ t\in(0,T]$, and $|\beta|+k>0$.
Moreover, if $\|\rho_0\|_{L^1}$ is small enough, inequality \eqref{eq:analytic} holds for any $t\in(0,\infty)$.
\end{theorem}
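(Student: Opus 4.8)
The plan is to prove the rearranged estimate \eqref{eq:analytic3}, namely
\[
\left\|D_x^\beta\partial_t^k\left(t^k\rho(t)\right)\right\|_{L^q}\leq \hat M^{|\beta|+k}(|\beta|+k)^{|\beta|+k}\,t^{-\frac{|\beta|}{2}-\frac{d}{2}(1-\frac1q)},
\]
by induction on $L:=|\beta|+k$, and then recover \eqref{eq:analytic} via the Leibniz expansion of $\partial_t^k\big(t^{-k}\cdot t^k\rho\big)$ exactly as advertised (with $M=\hat M+1$). The base case $L=1$ comes from Theorem \ref{thm:regularity1} together with Proposition \ref{pro:heatkernelestimate2}. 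For the inductive step, I would apply $D_x^\beta\partial_t^k(t^k\,\cdot)$ to the Duhamel formula \eqref{eq:mild}. The linear term $G(\cdot,t)\ast\rho_0$ is handled directly by \eqref{eq:estimate7} under the assumption \eqref{eq:assumptiona}. For the nonlinear term one writes, using Lemma \ref{eq:donglemm} to move the $t^k$ weight inside,
\[
D_x^\beta\partial_t^k\!\left(t^k\!\int_0^t\!\nabla G(\cdot,t-s)\ast[\rho\nabla c]\di s\right)
= \int_0^t D_x^\beta\nabla G(\cdot,t-s)\ast \partial_s^k\!\big(s^k\rho(s)\nabla c(s)\big)\,\di s + (\text{boundary terms from the }t\text{-derivative}),
\]
and the crucial point is that $\partial_s^k(s^k\rho\nabla c)$ is controlled by Proposition \ref{pro:spacetime}: the sum \eqref{fineestimate1} over the "interior" multi-indices (those with $0<|\gamma|+j<L$) is bounded using the induction hypothesis and Lemma \ref{lmm:multisequences}, while the two extreme terms, appearing explicitly in \eqref{fineestimate2}, namely $D_x^\beta\partial_t^k(t^k\rho)\,\nabla c$ and $\rho\,D_x^\beta\partial_t^k(t^k\nabla c)$, must be absorbed into the left-hand side.

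The absorption is the heart of the argument, and it is here that the smallness of $\theta$ enters. The term $\|\rho\,D_x^\beta\partial_t^k(t^k\nabla c)\|_{L^p}$ is bounded, via Hölder and Hardy–Littlewood–Sobolev, by $C\|\rho\|_{L^{p_1}}\|D_x^\beta\partial_t^k(t^k\rho)\|_{L^{p_3}}$ with $\|\rho\|_{L^{p_1}}\leq A\theta\, s^{-\frac{d}{2}(1-1/p_1)}$ from \eqref{eq:q}; plugging this, together with the $\partial_t^k(t^k\rho)$-self-term, back into the Duhamel integral and carrying out the time integration (a Beta-function bound, feasible precisely because the exponents exceed $-1$ when $1<q<\frac{d}{d-1}$) produces the same quantity $\sup_t t^{\frac{|\beta|}{2}+\frac{d}{2}(1-1/q)}\|D_x^\beta\partial_t^k(t^k\rho)\|_{L^q}$ on the right, multiplied by $C\theta$. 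Choosing $\theta$ small makes $C\theta<\tfrac12$, so the self-term is absorbed and one closes the induction, obtaining a bound of the form $\hat M^{L}L^{L-1}$ (note $L-1$, not $L$), which is what \eqref{fineestimate} in Proposition \ref{pro:spacetime} is set up to propagate.

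The remaining work is the range of $q$. As flagged in the introduction, the Beta-function exponents force a bootstrap in $q$: one first establishes \eqref{eq:analytic3} for $1<q<\frac{d}{d-1}$, then feeds this into the estimates for $\frac{d}{d-1}\leq q<\frac{d}{d-2}$ by splitting the Duhamel integral over $[0,t/2]$ and $[t/2,t]$ and choosing the Young/Hölder exponents so that each piece is time-integrable, iterating $d$ steps to cover $1<q<\infty$; the endpoints $q=1$ and $q=\infty$ then follow from the interior cases (e.g.\ by interpolation or a direct split as in the proof of Theorem \ref{thm:regularity1}). Finally, for small $\|\rho_0\|_{L^1}$ the assumption \eqref{eq:assumptiona} holds on every $[0,T]$ and all constants above are $T$-independent, so \eqref{eq:analytic} extends to $t\in(0,\infty)$; the only place where \eqref{eq:assumptiona} is genuinely needed (rather than a generic local-in-time bound) is the handling of the pure time derivatives, as noted in the remark after Theorem \ref{thm:contraction}. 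I expect the main obstacle to be bookkeeping the combinatorial factors: verifying that after the Leibniz expansions, the application of Lemma \ref{lmm:multisequences} with the shifted multi-indices $\bar\beta=(\beta,k)$, $\tilde\beta=(\beta,k-1)$, and the summation of binomial coefficients, everything collapses to the clean bound $(|\beta|+k)^{|\beta|+k}$ with a constant $\hat M$ that does not silently depend on $L$.
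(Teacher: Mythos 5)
The high-level structure you propose is the paper's: prove the rearranged bound from Proposition \ref{pro:spacetime}'s hypothesis \eqref{fineestimate} by induction on $L=|\beta|+k$, recover \eqref{eq:analytic} by peeling off the $t^k$ weight, and bootstrap in $q$. You also correctly flag that the induction target carries the exponent $L-1$ (matching \eqref{fineestimate}) and that the pure-time-derivative case is where \eqref{eq:assumptiona} is indispensable. But there are two structural gaps in the middle of your argument.

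First, your displayed Duhamel identity is wrong. The correct identity (the paper's \eqref{shijianjisuan}) comes from writing $t^k=\sum_{j=0}^k\binom{k}{j}s^{k-j}(t-s)^j$, changing variables, and recognizing the $j$-th term as $\bigl[\partial_t^j((t-s)^j\nabla G(\cdot,t-s))\bigr]\ast\bigl[\partial_s^{k-j}(s^{k-j}\rho(s)\nabla c(s))\bigr]$; there is no boundary term, and the full binomial sum is essential because it is precisely what Proposition \ref{pro:heatkernelestimate2} (the $t^m$-weighted heat-kernel bounds) is built to estimate. Your single-term expression with ``boundary terms'' is not a valid rewriting, and it also contradicts your own next sentence, which invokes Proposition \ref{pro:spacetime} for $D_x^\beta\partial_s^k(s^k\rho\nabla c)$ even though your displayed formula puts $D_x^\beta$ on $\nabla G$, not on $\rho\nabla c$.

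Second, and more seriously, the time integration does not close as you describe for $|\beta|>0$. If all the spatial derivatives sit on the nonlinearity, the $s$-singularity in $\bigl\|D_x^\beta\partial_s^{k-j}(s^{k-j}\rho\nabla c)\bigr\|_{L^p}\lesssim s^{-\frac{|\beta|+1}{2}-\frac{d}{2}(1-\frac1p)}$ is non-integrable at $s=0$ once $|\beta|\ge1$; if they all sit on $\nabla G$, the $(t-s)$-singularity $\bigl\|D_x^\beta\partial_t^j((t-s)^j\nabla G)\bigr\|_{L^a}\lesssim(t-s)^{-\frac{|\beta|+1}{2}-\frac{d}{2}(1-\frac1a)}$ is non-integrable at $s=t$. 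Your claim that ``the exponents exceed $-1$ when $1<q<\frac{d}{d-1}$'' is simply false for $|\beta|\geq 1$ either way. The paper resolves this by splitting at $s=(1-\epsilon)t$ with $\epsilon=(K|\beta|)^{-1}$: spatial derivatives go to $\rho\nabla c$ on $[(1-\epsilon)t,t]$ and to $\nabla G$ on $[0,(1-\epsilon)t]$, so each piece is integrable. Crucially, the near-endpoint Beta-type integral $J_1(\epsilon)$ in \eqref{eq:J1e} tends to $0$ as $K\to\infty$, and the absorption of the self-term for $|\beta|>0$ uses precisely this $\epsilon$-smallness (Step 1), not the smallness of $\theta$. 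The $\theta$-smallness is the absorption mechanism only when $|\beta|=0$ (Step 3 and \eqref{eq:gronwall}), where there is no $\epsilon$ to exploit and the full integral is used. Conflating these two mechanisms and appealing to a generic Beta-function bound leaves a hole in the inductive step for every $\beta\neq0$. (A $t/2$-split with $\theta$ small can be made to work, but you would then need to rework the uniform-in-$|\beta|$ bound on the near-endpoint integral; as written your proposal does neither.)
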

\begin{proof}
Let us introduce some notations for this proof. 
\begin{enumerate}
\item[(i)] For $\beta\in\mathbb{N}^d$ and $1\leq q\leq \infty$, set
\begin{align}\label{eq:mu}
\mu:=\frac{|\beta|}{2}+\frac{d}{2}\left(1-\frac{1}{q}\right).
\end{align}
\item[(ii)] There exists a constant $A_0$ independent of $p$ such that
\begin{equation}\label{eq:AA0}
\left\|\nabla G(\cdot,t-s)\right\|_{L^{p}}\leq A_0(t-s)^{-\frac{1}{2}-\frac{d}{2}(1-\frac{1}{p})},\quad 1\leq p\leq \infty.
\end{equation}
\item[(iii)] In the rest of the prove, we will use $A_1$ to denote the constant generated by Hardy-Littlewood-Sobolev inequality. 
\end{enumerate}
Instead of proving \eqref{eq:analytic} directly, we fix $0<\delta<1$ and use induction to show that there exists some constant $M$ independent of $|\beta|$, $k$, and $T$ such that
\begin{align}\label{eq:Linduction}
\left\|D_x^\beta\partial_t^k\left(t^k\rho(t)\right)\right\|_{L^{q}}\leq M^{|\beta|+k-\delta}(|\beta|+k)^{|\beta|+k-1}t^{-\mu}
\end{align}
for any $1\leq q\leq\infty$, $t\in(0,T]$,  and $|\beta|+k>0$.  We first prove the following claim:

\

\noindent\textbf{Claim:} If inequality \eqref{eq:Linduction} holds for some constant $M$, then inequality \eqref{eq:analytic} holds for $M+1$. 
\begin{proof}[Proof of the Claim]
Assume $k>0$. Notice that for $j=0,1,2,\cdots,k-1$, we have
\begin{equation*}
\begin{aligned}
t^jD_x^\beta\partial_t^k\left(t^{k-j}\rho\right)=&t^j\left[D_x^\beta\sum_{i=0}^k\binom{k}{i}\partial_t^it\partial_t^{k-i}\left(t^{k-j-1}\rho\right)\right]\\
=&t^j\left[D_x^\beta t\partial_t^k\left(t^{k-j-1}\rho\right)+kD_x^\beta\partial_t^{k-1}\left(t^{k-j-1}\rho\right)\right]\\
=&t^{j+1}\left[D_x^\beta\partial_t^k\left(t^{k-j-1}\rho\right)\right]+kt^j\left[D_x^\beta\partial_t^{k-1}\left(t^{k-j-1}\rho\right)\right],
\end{aligned}
\end{equation*}
which implies
\begin{align}\label{eq:kinduc}
t^{j+1}\left[D_x^\beta\partial_t^k\left(t^{k-j-1}\rho\right)\right]=t^jD_x^\beta\partial_t^k\left(t^{k-j}\rho\right)-kt^j\left[D_x^\beta\partial_t^{k-1}\left(t^{k-j-1}\rho\right)\right].
\end{align}
Combining $j=0$ in \eqref{eq:kinduc} and inequality \eqref{eq:Linduction}, we have
\begin{align*}
\sup_{t\in(0,T]}\left\|t^{\mu+1}D_x^\beta\partial_t^k\left(t^{k-1}\rho\right)\right\|_{L^q}\leq&\sup_{t\in(0,T]}\left\|t^{\mu}D_x^\beta\partial_t^k(t^k\rho)\right\|_{L^q}+\sup_{t\in(0,T]}\left\|kt^{\mu}D_x^\beta\partial_t^{k-1}(t^{k-1}\rho)\right\|_{L^q}\\
\leq&M^{|\beta|+k-\delta}\left(1+\frac{1}{M}\right)\left(|\beta|+k\right)^{|\beta|+k-1}.
\end{align*}
Similarly, for any $0<\tilde{k}\leq k$ we have
\begin{align*}
\sup_{t\in(0,T]}\left\|t^{\mu+1}D_x^\beta\partial_t^{\tilde{k}}\left(t^{\tilde{k}-1}\rho\right)\right\|_{L^q}
\leq M^{|\beta|+\tilde{k}-\delta}\left(1+\frac{1}{M}\right)\left(|\beta|+\tilde{k}\right)^{|\beta|+\tilde{k}-1}.
\end{align*}
Repeat the above process for $j=1,2,\cdots,k-1$ and we obtain
\begin{align*}
\sup_{t\in(0,T]}\left\|t^{\mu+k}D_x^\beta\partial_t^k\rho\right\|_{L^q}\leq M^{|\beta|+k-\delta}\left(1+\frac{1}{M}\right)^k\left(|\beta|+k\right)^{|\beta|+k-1},
\end{align*}
which implies that inequality \eqref{eq:analytic} holds for $M+1$.
\end{proof}

In the following, we will focus on the proof of \eqref{eq:Linduction}. The proof will be separated into two parts: Part I for $1< q<\infty$ and Part II for $q=\infty$ and $q=1$.

\

\noindent \textbf{Part I ($1<q<\infty$):}
We will use an induction for $|\beta|+k$ to prove inequality \eqref{eq:Linduction} with $1< q<\infty$.
Start from $|\beta|+k=1$. For $k=0$ and $|\beta|=1$, due to \eqref{eq:regularity}, there exists a constant $M>0$ satisfying \eqref{eq:Linduction}. For $|\beta|=0$ and $k=1$, by \eqref{eq:regularity} again, we have
\begin{align*}
\|\partial_t(t\rho(t))\|_{L^q}\leq\|\rho(t)\|_{L^q}+t\|\partial_t\rho(t)\|_{L^q}	\leq A\theta t^{-\frac{d}{2}\left(1-\frac{1}{q}\right)}+tCt^{-1-\frac{d}{2}\left(1-\frac{1}{q}\right)}\leq Ct^{-\frac{d}{2}\left(1-\frac{1}{q}\right)}.
\end{align*}
Therefore, we can always find some constant $M$ such that \eqref{eq:Linduction} holds for $|\beta|+k=1$.

Assume that there exists a constant $M$  independent of $\beta$ and $k$ such that \eqref{eq:Linduction} holds for $0<|\beta|+k\leq L-1$ for some $L\geq 2$.
We are going to use four steps to prove that \eqref{eq:Linduction} also holds for $|\beta|+k=L$ with the same $M$. The first two steps are for $|\beta|>0$, and the last two steps are for $|\beta|=0$.
\begin{enumerate}
\item [$\bullet$] \textbf{Step 1:} For $|\beta|>0$, $|\beta|+k=L$, and $1< q<\frac{d}{d-1}$, we are going to prove 
\begin{align}\label{eq:Linduction1}
\left\|D_x^\beta\partial_t^k\left(t^k\rho(t)\right)\right\|_{L^{q}}\leq h(M) M^{|\beta|+k-\delta}(|\beta|+k)^{|\beta|+k-1}t^{-\mu},\quad \forall t\in(0,T],
\end{align}
where function $h$ is independent of $q$ and $|\beta|+k$, and satisfies $h(M)\to0$ as $M\to\infty.$
\item [$\bullet$] \textbf{Step 2:} We are going to prove \eqref{eq:Linduction} for  $|\beta|>0$, $|\beta|+k=L$, and $1< q<\infty$.
\item [$\bullet$] \textbf{Step 3:} For $|\beta|=0$, $k=L$, and $1< q<\frac{d}{d-1}$, we are going to prove 
\begin{align}\label{eq:Linduction2}
\left\|\partial_t^k\left(t^k\rho(t)\right)\right\|_{L^{q}}\leq \bar{h}(M) M^{k-\delta}k^{k-1}t^{-\mu},\quad \forall t\in(0,T],
\end{align}  
where function $\bar{h}$ is independent of $q$ and $|\beta|+k$, and satisfies $\bar{h}(M)\to0$ as $M\to\infty.$
\item [$\bullet$] \textbf{Step 4:} We are going to prove \eqref{eq:Linduction}  for  $|\beta|=0$, $k=L,$ and $1< q< \infty$.
\end{enumerate}
Here, for any positive integer $\ell$, we will assume $\frac{d}{d-\ell}=\infty$ for $d=\ell.$ Notice that combining the results in Step 2 and Step 4, we will complete the induction process and finish the proof of Part I for $1< q<\infty$.

\

\noindent\textbf{Step 1 ($|\beta|>0$, $|\beta|+k=L$, $1< q<\frac{d}{d-1}$):}
Direct calculation shows that
\begin{multline}\label{eq:induction}
\left\|D_x^\beta\partial_t^k\left(t^k\rho(t)\right)\right\|_{L^q}\leq \left\|D_x^\beta\partial_t^k\left[t^kG(\cdot,t)\ast\rho_0\right]\right\|_{L^q}\\
	+\left\|D_x^\beta\partial_t^k\left[t^k\int_0^t\nabla G(\cdot,t-s)\ast\left[\rho(s)\nabla c(s)\right]\di s\right]\right\|_{L^q}.
\end{multline}
From inequality \eqref{eq:estimate7} with $f=\rho_0$ and $m=k$, the first term  in \eqref{eq:induction} becomes
\begin{equation*}
\begin{aligned}
\left\|D_x^\beta\partial_t^k\left[t^kG(\cdot,t)\ast\rho_0\right]\right\|_{L^q}\leq M_0^{|\beta|+k}(|\beta|+k)^{|\beta|+k+\frac{d}{2}(1-\frac{1}{q})}\|\rho_0\|_{L^1}t^{-\mu}.
\end{aligned}
\end{equation*}
When $M>M_0$, we have $M^\delta\left(\frac{M_0}{M}\right)^{|\beta|+k}(|\beta|+k)^{1+\frac{d}{2}\left(1-\frac{1}{q}\right)}\to 0~\textrm{ as }~|\beta|+k\to \infty.$
Define
\begin{align}\label{eq:h1M}
h_1(M):=\|\rho_0\|_{L^1}\sup_{|\beta|+k\geq 1}\left[M^\delta\left(\frac{M_0}{M}\right)^{|\beta|+k}(|\beta|+k)^{1+\frac{d}{2}}\right],
\end{align}
and then $h_1$ satisfies $\lim_{M\to\infty}h_1(M)=0$. Moreover, we have
\begin{equation}\label{firstterm}
\begin{aligned}
\left\|D_x^\beta\partial_t^k\left[t^kG(\cdot,t)\ast\rho_0\right]\right\|_{L^q}\leq  h_1(M)M^{|\beta|+k-\delta}(|\beta|+k)^{|\beta|+k-1}t^{-\mu}
\end{aligned}
\end{equation}
for any $|\beta|+k>0$ and $1\leq q\leq \infty$.

Next, we estimate the second term in \eqref{eq:induction}.
By the identity $t^k=\sum_{j=0}^k\binom{k}{j}s^{k-j}(t-s)^{j}$, the second term in \eqref{eq:induction} becomes
\begin{equation*}
\begin{aligned}
&\left\|D_x^\beta\partial_t^k\left[t^k\int_0^t\nabla G(\cdot,t-s)\ast\left[\rho(s)\nabla c(s)\right]\di s\right]\right\|_{L^q}\\
=&\left\|D_x^\beta\sum_{j=0}^k\binom{k}{j}\partial_t^k\int_0^t\int_{\mathbb{R}^d}(t-s)^j\nabla G(x-y,t-s)s^{k-j}(\rho\nabla c)(y,s)\di y\di s\right\|_{L^q}\\
=&\left\|D_x^\beta\sum_{j=0}^k\binom{k}{j}\partial_t^{k-j}\int_0^t\int_{\mathbb{R}^d}\partial_t^j\left[(t-s)^j\nabla G(x-y,t-s)\right]s^{k-j}(\rho\nabla c)(y,s)\di y\di s\right\|_{L^q}.
\end{aligned}
\end{equation*}
Changing of variable gives
\begin{equation}\label{shijianjisuan}
\begin{aligned}
&\left\|D_x^\beta\partial_t^k\left[t^k\int_0^t\nabla G(\cdot,t-s)\ast\left[\rho(s)\nabla c(s)\right]\di s\right]\right\|_{L^q}\\
=&\left\|D_x^\beta\sum_{j=0}^k\binom{k}{j}\partial_t^{k-j}\int_0^t\int_{\mathbb{R}^d}\partial_s^j\left[s^j\nabla G(x-y,s)\right](t-s)^{k-j}(\rho\nabla c)(y,t-s)\di y\di s\right\|_{L^q}\\
=&\left\|D_x^\beta\sum_{j=0}^k\binom{k}{j}\int_0^t\int_{\mathbb{R}^d}\partial_s^j\left[s^j\nabla G(x-y,s)\right]\partial_t^{k-j}\left[(t-s)^{k-j}(\rho\nabla c)(y,t-s)\right]\di y\di s\right\|_{L^q}\\
=&\left\|D_x^\beta\sum_{j=0}^k\binom{k}{j}\int_0^t\left[\partial_t^j((t-s)^j\nabla G(\cdot,t-s))\right]\ast\left[\partial_s^{k-j}(s^{k-j}\rho(s)\nabla c(s))\right]\di s\right\|_{L^q}.
\end{aligned}
\end{equation}
For some $0<\e<1$ to be determined later, we separate the integration into two parts and obtain
\begin{equation}\label{eq:case1}
\begin{aligned}
&\left\|D_x^\beta\partial_t^k\left[t^k\int_0^t\nabla G(\cdot,t-s)\ast\left(\rho(s)\nabla c(s)\right)\di s\right]\right\|_{L^q}\\ \leq&\sum_{j=0}^k\binom{k}{j}\int_{(1-\epsilon)t}^t\left\|\left[\partial_t^j((t-s)^j\nabla G(\cdot,t-s))\right]\ast\left[D_x^\beta\partial_s^{k-j}(s^{k-j}\rho(s)\nabla c(s))\right]\right\|_{L^q}\di s\\
&+\sum_{j=0}^k\binom{k}{j}\int_0^{(1-\epsilon)t}\left\|\left[D_x^\beta\partial_t^j((t-s)^j\nabla G(\cdot,t-s))\right]\ast\left[\partial_s^{k-j}(s^{k-j}\rho(s)\nabla c(s))\right]\right\|_{L^q}\di s\\
=&:I_1+I_2.
\end{aligned}
\end{equation}

\textbf{Estimate of the first term  $I_1$ in \eqref{eq:case1}:} 
From Young's inequality for $1+\frac{1}{q}=\frac{1}{a}+\frac{1}{p}$ and $1\leq a,~p\leq q$, we have
\begin{equation}\label{eq:estI1}
\begin{aligned}
I_1\leq&\sum_{j=1}^k\binom{k}{j}\int_{(1-\epsilon)t}^t\left\|\partial_t^j\left[(t-s)^j\nabla G(\cdot,t-s)\right]\right\|_{L^{a}}\left\|D_x^\beta\partial_s^{k-j}\left[s^{k-j}\rho(s)\nabla c(s)\right]\right\|_{L^{p}}\di s\\
&+\int_{(1-\epsilon)t}^t\left\|\nabla G(\cdot,t-s)\right\|_{L^{a}}\left\|D_x^\beta\partial_s^k\left[s^k\rho(s)\nabla c(s)\right]\right\|_{L^{p}}\di s=:I_{11}+I_{12}.
\end{aligned}
\end{equation}
By \eqref{eq:estimate6},  there holds
\[
\left\|\partial_t^j\left[(t-s)^j\nabla G(\cdot,t-s)\right]\right\|_{L^{a}}\leq M_0^{j}j^{j+\frac{1}{2}+\frac{d}{2}(1-\frac{1}{a})}(t-s)^{-\frac{1}{2}-\frac{d}{2}(1-\frac{1}{a})},\quad 1\leq j\leq k.
\]
From \eqref{fineestimate1}, we have 
\[
\left\|D_x^\beta\partial_s^{k-j}\left[s^{k-j}\rho(s)\nabla c(s)\right]\right\|_{L^{p}}\leq N(1+M^\delta) M^{|\beta|+k-j-2\delta}(|\beta|+k-j)^{|\beta|+k-j-1}s^{-\frac{|\beta|+1}{2}-\frac{d}{2}(1-\frac{1}{p})}
\]
for $1\leq j\leq k$. Constant $N$ comes from \eqref{fineestimate1}.
Combining the above two inequalities gives the following estimate for  the term $I_{11}$ in \eqref{eq:estI1}:
\begin{multline}\label{eq:estimateI11}
I_{11}\leq J_1(\e)\sum_{j=1}^k\binom{k}{j}M_0^{j}j^{j+\frac{1}{2}+\frac{d}{2}(1-\frac{1}{a})}N(1+M^\delta) M^{|\beta|+k-j-2\delta}(|\beta|+k-j)^{|\beta|+k-j-1}t^{-\mu}\\
= N(M^{-\delta}+1)M^{|\beta|+k-\delta}J_1(\e)\sum_{j=1}^k\binom{k}{j}\left(\frac{M_0}{M}\right)^jj^{\frac{3}{2}+\frac{d}{2}(1-\frac{1}{a})}j^{j-1}(|\beta|+k-j)^{|\beta|+k-j-1}t^{-\mu},
\end{multline}
where $J_1(\e)$ is defined by
\begin{align}\label{eq:J1e}
J_1(\epsilon)=\int_{1-\epsilon}^1(1-s)^{-\frac{1}{2}-\frac{d}{2}\left(1-\frac{1}{a}\right)}s^{-\frac{|\beta|+1}{2}-\frac{d}{2}\left(1-\frac{1}{p}\right)}\di s.
\end{align}
Here we choose $1\leq a<\frac{d}{d-1}$, and then
$-\frac{1}{2}-\frac{d}{2}\left(1-\frac{1}{a}\right)>-1.$
For some constant $K$ to be determined, we set
\begin{equation}\label{eq:ebeta}
\e=\e(|\beta|):=(K|\beta|)^{-1}.
\end{equation}
Then, we have
\begin{equation}
J_1(\e)\leq C(K|\beta|)^{-(\frac{1}{2}-\frac{d}{2}(1-\frac{1}{a}))}\left(1+\frac{1}{K|\beta|-1}\right)^{\frac{|\beta|+1}{2}+\frac{d}{2}\left(1-\frac{1}{p}\right)}.
\end{equation}
By the choice of $a$, we have $\frac{1}{2}-\frac{d}{2}(1-\frac{1}{a})>0$ and hence $J_1(\e)\to 0$ as $K|\beta|\to\infty$. Therefore, we fixed $K$ big enough such 
\begin{align}\label{eq:J}
J_1(\e)\leq J:=\frac{1}{4AA_0A_1\theta }
\end{align}
holds for any $|\beta|>0$, where $A$ comes from \eqref{eq:q} and $A_0$ satisfies \eqref{eq:AA0}.
(Here, we choose the above two constants $J$ and $K$ because they will also be used in the following estimate of \eqref{eq:estI12}.)
For any $M>M_0$, notice that $\left(\frac{M_0}{M}\right)^jj^{\frac{3}{2}+\frac{d}{2}(1-\frac{1}{a})}\to 0$ as $j\to\infty$. Set
\begin{align}\label{eq:h2M}
h_2(M):= 2\lambda N J \sup_{j\geq 1}\left[\left(\frac{M_0}{M}\right)^jj^{\frac{3}{2}+\frac{d}{2} }\right]
\end{align}
and then $h_2$ satisfies $\lim_{M\to\infty}h_2(M)=0$, where $\lambda$ is the constant in Lemma \ref{lmm:multisequences}.
Therefore, from \eqref{eq:estimateI11} we obtain
\begin{equation}\label{eq:estI11}
\begin{aligned}
I_{11}\leq h_2(M)M^{|\beta|+k-\delta}(|\beta|+k)^{|\beta|+k-1}t^{-\mu},
\end{aligned}
\end{equation}
where  we used Lemma \ref{lmm:multisequences} in the last step.

For the term $I_{12}$ in \eqref{eq:estI1},  we obtain from  \eqref{fineestimate2} and H\"older's inequality that
\begin{align*}
&\left\|D_x^\beta\partial_s^k\left[s^k\rho(s)\nabla c(s)\right]\right\|_{L^{p}}\leq NM^{|\beta|+k-2\delta}\left(|\beta|+k\right)^{|\beta|+k-1}s^{-\frac{|\beta|+1}{2}-\frac{d}{2}\left(1-\frac{1}{p}\right)}\\
&\qquad \qquad +\left\|D_x^\beta\partial_s^k\left(s^k\rho(s)\right)\nabla c(s)\right\|_{L^{p}}+\left\|\rho(s)D_x^\beta\partial_s^k\left(s^k\nabla c(s)\right)\right\|_{L^{p}}\\
\leq&NM^{|\beta|+k-2\delta}\left(|\beta|+k\right)^{|\beta|+k-1}s^{-\frac{|\beta|+1}{2}-\frac{d}{2}\left(1-\frac{1}{p}\right)}\\
&+\left\|D_x^\beta\partial_s^k(s^k\rho(s))\right\|_{L^q}\|\nabla c( s)\|_{L^{\frac{pq}{q-p}}}+\|\rho(s)\|_{L^{\frac{1}{2-\frac{1}{a}-\frac{1}{d}}}}\left\|D_x^\beta\partial_s^k\left(s^k\nabla c(s)\right)\right\|_{L^{\frac{1}{\frac{1}{q}+\frac{1}{d}-1}}}.
\end{align*}
Here, we used the assumption $1< q< \frac{d}{d-1}$ to estimate $I_{12}$. (This is the only place in Step 1 which needs the assumption $1< q< \frac{d}{d-1}$. Here we could also assume $q=1$ for $d>1$.)
By \eqref{eq:AA0},  we have the following estimate for the term $I_{12}$ in \eqref{eq:estI1}:
\begin{equation}\label{eq:estm}
\begin{aligned}
I_{12}\leq& NA_0M^{|\beta|+k-2\delta}\left(|\beta|+k\right)^{|\beta|+k-1}J_1(\epsilon)t^{-\frac{|\beta|}{2}-\frac{d}{2}\left(1-\frac{1}{q}\right)}\\
&+A_0\int_{(1-\epsilon)t}^t\left\|D_x^\beta \partial_s^k(s^k\rho(s)) \right\|_{L^q}\|\nabla c( s)\|_{L^{\frac{pq}{q-p}}}(t-s)^{-\frac{1}{2}-\frac{d}{2}\left(1-\frac{1}{a}\right)}\di s\\
&+A_0\int_{(1-\epsilon)t}^t\|\rho(s)\|_{L^{\frac{1}{2-\frac{1}{a}-\frac{1}{d}}}}\left\|D_x^\beta\partial_s^k\left(s^k\nabla c(s)\right)\right\|_{L^{\frac{1}{\frac{1}{q}+\frac{1}{d}-1}}}(t-s)^{-\frac{1}{2}-\frac{d}{2}\left(1-\frac{1}{a}\right)}\di s\\
\leq&NA_0JM^{|\beta|+k-2\delta}\left(|\beta|+k\right)^{|\beta|+k-1}t^{-\mu}\\
&+2AA_0A_1\theta\int_{(1-\epsilon)t}^t\left\|D_x^\beta\partial_s^k(s^k\rho(s))\right\|_{L^q}(t-s)^{-\frac{1}{2}-\frac{d}{2}\left(1-\frac{1}{a}\right)}s^{-\frac{d}{2}\left(-1+\frac{1}{a}+\frac{1}{d}\right)}\di s,
\end{aligned}
\end{equation}
where $A_1$ comes from Hardy-Littlewood-Sobolev inequality.
Set
\begin{align}\label{eq:phi}
\phi(t):=\sup_{0<s\leq t}{s^{\mu}\left\|D_x^\beta\partial_s^k\left(s^k\rho(s)\right)\right\|_{L^q}}.
\end{align}
Set
\begin{align}\label{eq:h3M}
h_3(M):=NA_0JM^{-\delta},
\end{align}
and by the choice of $J_1(\epsilon)$ in \eqref{eq:J}, we obtain
\begin{equation}\label{eq:estI12}
\begin{aligned}
I_{12}\leq&h_3(M)M^{|\beta|+k-\delta}\left(|\beta|+k\right)^{|\beta|+k-1}t^{-\mu}+2AA_0A_1\theta J_1(\e)\cdot\phi(t)t^{-\mu}\\
\leq&h_3(M)M^{|\beta|+k-\delta}\left(|\beta|+k\right)^{|\beta|+k-1}t^{-\mu}+\frac{1}{2}\cdot\phi(t)t^{-\mu}.
\end{aligned}
\end{equation}
Combining \eqref{eq:estI1}, \eqref{eq:estI11} and \eqref{eq:estI12} gives
\begin{equation}\label{eq:estimateI1}
\begin{aligned}
I_1\leq [h_2(M)+h_3(M)]M^{|\beta|+k-\delta}\left(|\beta|+k\right)^{|\beta|+k-1}t^{-\mu}+\frac{1}{2}\phi(t)t^{-\mu}.
\end{aligned}
\end{equation}

\textbf{Estimate of the second term $I_2$ in \eqref{eq:case1}:}
Combining Young's convolution inequality for $1+\frac{1}{q}=\frac{1}{a}+\frac{1}{p}$ and $1\leq a,~p\leq q$, \eqref{eq:estimate6} and \eqref{fineestimate1}, we have
\begin{equation}\label{I_2}
\begin{aligned}
I_2\leq&\sum_{j=0}^{k}\binom{k}{j}\int_0^{(1-\epsilon)t}\left\|D_x^\beta\partial_t^j\left[(t-s)^j\nabla G(\cdot,t-s)\right]\right\|_{L^{a}}\left\|\partial_s^{k-j}\left[s^{k-j}\rho(s)\nabla c(s)\right]\right\|_{L^{p}}\di s\\
\leq&\sum_{j=0}^k\binom{k}{j}M_0^{\frac{|\beta|}{2}+j}(|\beta|+j)^{\frac{|\beta|+1}{2}+j+\frac{d}{2}(1-\frac{1}{a})} N(1+M^\delta)M^{k-j-2\delta}(k-j)^{k-j-1}J_2(\epsilon)t^{-\mu},
\end{aligned}
\end{equation}
where 
\begin{align}\label{eq:J2ep}
J_2(\epsilon)=\int_0^{1-\epsilon}(1-s)^{-\frac{|\beta|+1}{2}-\frac{d}{2}\left(1-\frac{1}{a}\right)}s^{-\frac{1}{2}-\frac{d}{2}\left(1-\frac{1}{p}\right)}\di s,
\end{align}
and $N$ comes from inequality \eqref{fineestimate1}.
Here, we choose $1\leq p<\frac{d}{d-1}$, and we have 
$-\frac{1}{2}-\frac{d}{2}\left(1-\frac{1}{p}\right)>-1.$
For $\epsilon=(K|\beta|)^{-1}$, we have $J_2(\epsilon)\leq C(K|\beta|)^{\frac{|\beta|+1}{2}+\frac{d}{2}\left(1-\frac{1}{a}\right)}$
for some constant $C$, which implies
\begin{multline*}
I_2\leq CN(M^{-\delta}+1)M^{|\beta|+k-\delta}\sum_{j=0}^k\binom{k}{j}\frac{M_0^{\frac{|\beta|}{2}+j}K^{\frac{|\beta|+1}{2}+\frac{d}{2}(1-\frac{1}{a})}}{M^{|\beta|+j}}(|\beta|+j)^{2+d(1-\frac{1}{a})}\\
\times(|\beta|+j)^{|\beta|+j-1}(k-j)^{k-j-1}t^{-\mu}.
\end{multline*}
When $M>\max\{M_0,K\}$ and $|\beta|+j\to\infty,$ we have
\[
\frac{M_0^{\frac{|\beta|}{2}+j}K^{\frac{|\beta|+1}{2}+\frac{d}{2}(1-\frac{1}{a})}}{M^{|\beta|+j}}(|\beta|+j)^{2+d(1-\frac{1}{a})}\to 0.
\]
Set 
\begin{align}
h_4(M):=\frac{2CN}{\lambda}\sup_{|\beta|+j\geq1}\left[\frac{M_0^{\frac{|\beta|}{2}+j}K^{\frac{|\beta|+1}{2}+\frac{d}{2}}}{M^{|\beta|+j}}(|\beta|+j)^{2+d}\right],
\end{align}
and then $h_4$ satisfies $\lim_{M\to\infty}h_4(M)=0$. We have
\begin{equation}\label{eq:I2final}
\begin{aligned}
I_2\leq h_4(M)M^{|\beta|+k-\delta}(|\beta|+k)^{|\beta|+k-1}t^{-\mu}. 
\end{aligned}
\end{equation}
Define
\[
h(M):=2[h_1(M)+h_2(M)+h_3(M)+h_4(M)].
\]
Combining \eqref{eq:induction}, \eqref{firstterm}, \eqref{eq:case1}, \eqref{eq:estimateI1}, and \eqref{eq:I2final}, we finally obtain
\begin{align*}
\left\|D_x^\beta\partial_t^k\left(t^k\rho(t)\right)\right\|_{L^q}\leq\frac{1}{2}h(M)M^{|\beta|+k-\delta}(|\beta|+k)^{|\beta|+k-1}t^{-\mu}+\frac{1}{2}\phi(t)t^{-\mu},
\end{align*}
which implies $\phi(t)\leq h(M) M^{|\beta|+k-\delta}(|\beta|+k)^{|\beta|+k-1}$.
Hence, inequality \eqref{eq:Linduction1} holds for $|\beta|+k=L$ when $|\beta|>0$ and $1< q<\frac{d}{d-1}$. We finish the proof of Step 1.

\

\noindent\textbf{Step 2 ($|\beta|>0$, $|\beta|+k=L$, $1< q<\infty$):} 
For $d=1$, Step 1 has already given the results for $1< q<\infty.$ For $d\geq 2$, the proof is similar to the proof of Theorem \ref{thm:regularity1}, and here we will only prove the case for $\frac{d}{d-1}\leq q<\frac{d}{d-2}$. For $\frac{d}{d-2}\leq q< \infty$, repeat the same process similarly.

Notice that in Step 1, the assumption  $1< q<\frac{d}{d-1}$ was only used in the estimate for the term $I_{12}$ in \eqref{eq:estI1}.  
When $\frac{d}{d-1}\leq q<\frac{d}{d-2}$, we use the results in Step 1 to estimate the term $I_{12}$ in \eqref{eq:estI1} as follows. Recall that
\begin{equation}\label{eq:recall}
\begin{aligned}
I_{12}=&\int_{(1-\epsilon)t}^t\left\|\nabla G(\cdot,t-s)\right\|_{L^{a}}\left\|D_x^\beta\partial_s^k\left[s^k\rho(s)\nabla c(s)\right]\right\|_{L^{p}}\di s \\
\leq&A_0\int_{(1-\epsilon)t}^t (t-s)^{-\frac{1}{2}-\frac{d}{2}(1-\frac{1}{a})} \left\|D_x^\beta\partial_s^k\left[s^k\rho(s)\nabla c(s)\right]\right\|_{L^{p}}\di s.
\end{aligned}
\end{equation}
Due to $\frac{d}{d-1}\leq q<\frac{d}{d-2}$, there exist $a$ and $p$ such that
$1\leq a,~p<\frac{d}{d-1},$ $1+\frac{1}{q}=\frac{1}{a}+\frac{1}{p}.$
Due to \eqref{fineestimate2}, we have
\begin{multline*}
\left\|D_x^\beta\partial_s^k\left[s^k\rho(s)\nabla c(s)\right]\right\|_{L^{p}}\leq NM^{|\beta|+k-2\delta}\left(|\beta|+k\right)^{|\beta|+k-1}s^{-\frac{|\beta|+1}{2}-\frac{d}{2}\left(1-\frac{1}{p}\right)}\\
+\left\|D_x^\beta\partial_s^k\left(s^k\rho(s)\right)\nabla c(s)\right\|_{L^{p}}+\left\|\rho(s)D_x^\beta\partial_s^k\left(s^k\nabla c(s)\right)\right\|_{L^{p}}.
\end{multline*}
By H\"older's inequality for $\frac{1}{p_1}+\frac{1}{p_2}=\frac{1}{p}$ and Hardy-Littlewood-Sobolev inequality for $1+\frac{1}{p_2}=\frac{1}{p_3}+\frac{1}{d}$ and $p_2>d$, we have
\begin{multline}\label{eq:I12}
\left\|D_x^\beta\partial_s^k\left[s^k\rho(s)\nabla c(s)\right]\right\|_{L^{p}}\leq NM^{|\beta|+k-2\delta}\left(|\beta|+k\right)^{|\beta|+k-1}s^{-\frac{|\beta|+1}{2}-\frac{d}{2}\left(1-\frac{1}{p}\right)}\\
+\left\|D_x^\beta\partial_s^k\left(t^k\rho(s)\right)\right\|_{L^{p_1}}\left\|\nabla c(s)\right\|_{L^{p_2}}+\left\|\rho(s)\right\|_{L^{p_1}}\left\|D_x^\beta\partial_s^k\left(s^k\nabla c(s)\right)\right\|_{L^{p_2}}\\
\leq NM^{|\beta|+k-2\delta}\left(|\beta|+k\right)^{|\beta|+k-1}s^{-\frac{|\beta|+1}{2}-\frac{d}{2}\left(1-\frac{1}{p}\right)}\\
+A_1\left\|D_x^\beta\partial_s^k\left(s^k\rho(s)\right)\right\|_{L^{p_1}}\left\|\rho(s)\right\|_{L^{p_3}}+A_1\left\|\rho(s)\right\|_{L^{p_1}}\left\|D_x^\beta\partial_s^k\left(s^k\rho(s)\right)\right\|_{L^{p_3}}.
\end{multline}
Due to $1+\frac{1}{p}=\frac{1}{p_1}+\frac{1}{p_3}+\frac{1}{d},$ there exist $p< p_1<\frac{d}{d-1}$ and $1< p_3<\frac{d}{d-1}$ so that we can apply the results obtained in Step 1. 
Apply assumption \eqref{eq:assumptiona} and \eqref{eq:Linduction1} in \eqref{eq:I12} to obtain
\begin{multline*}
\left\|D_x^\beta\partial_s^k\left[s^k\rho(s)\nabla c(s)\right]\right\|_{L^{p}}\leq NM^{|\beta|+k-2\delta}\left(|\beta|+k\right)^{|\beta|+k-1}s^{-\frac{|\beta|+1}{2}-\frac{d}{2}\left(1-\frac{1}{p}\right)}\\
+2AA_1\theta h(M) M^{|\beta|+k-\delta}(|\beta|+k)^{|\beta|+k-1}s^{-\frac{|\beta|+1}{2}-\frac{d}{2}(1-\frac{1}{p})}.
\end{multline*}
Take it into \eqref{eq:recall}, and we obtain the following estimate similar to \eqref{eq:estI12}:
\begin{equation}\label{eq:estm2}
\begin{aligned}
&I_{12}\leq h_3(M)M^{|\beta|+k-\delta}\left(|\beta|+k\right)^{|\beta|+k-1}t^{-\mu}\\
&+2AA_0A_1\theta h(M) M^{|\beta|+k-\delta}(|\beta|+k)^{|\beta|+k-1}\int_{(1-\epsilon)t}^t  s^{-\frac{|\beta|+1}{2}-\frac{d}{2}(1-\frac{1}{p})}
 (t-s)^{-\frac{1}{2}-\frac{d}{2}\left(1-\frac{1}{a}\right)}\di s\\
 \leq &h_3(M)M^{|\beta|+k-\delta}\left(|\beta|+k\right)^{|\beta|+k-1}t^{-\mu}+2AA_0A_1\theta J_1(\e)h(M) M^{|\beta|+k-\delta}(|\beta|+k)^{|\beta|+k-1}t^{-\mu}\\
\leq &\left[h_3(M)+\frac{h(M)}{2}\right]M^{|\beta|+k-\delta}\left(|\beta|+k\right)^{|\beta|+k-1}t^{-\mu},
\end{aligned}
\end{equation}
where we used \eqref{eq:J} in the last step. Combining \eqref{eq:induction}, \eqref{firstterm},  \eqref{eq:case1}, \eqref{eq:estI1}, \eqref{eq:estI11}, \eqref{eq:I2final}, and \eqref{eq:estm2}, we obtain the estimate \eqref{eq:Linduction} for $1< q<\frac{d}{d-2}$. Therefore, we finish the proof of Step 2 for $d=2$. For any $d>2$, we only need repeat the above process again for $d-2$ times (similar to the proof of Theorem \ref{thm:regularity1}).

\

\noindent\textbf{Step 3 ($|\beta|=0$, $k=L$, $1< q<\frac{d}{d-1}$):}
Recall our assumption for induction: assume that there exists a constant $M$ independent of $k$ such that \eqref{eq:Linduction} holds for $|\beta|=0$ and $0< k\leq L-1$ for some positive integer $L$. We are going to prove \eqref{eq:Linduction} for $k=L$. 

As in Step 1, we will estimate the two terms in \eqref{eq:induction} with $|\beta|=0$. Because \eqref{firstterm} holds for any $|\beta|+k>0$, we only need to deal with the second term in \eqref{eq:induction}.
From  \eqref{shijianjisuan}, we have the following estimate for the second term in \eqref{eq:induction}:
\begin{equation}\label{eq:case2}
\begin{aligned}
&\left\|\partial_t^k\left[t^k\int_0^t\nabla G(\cdot,t-s)\ast\left(\rho(s)\nabla c(s)\right)\di s\right]\right\|_{L^q}\\
\leq&  \sum_{j=0}^k\binom{k}{j}\int_0^t\left\|\left[\partial_t^j\left((t-s)^j\nabla G(\cdot,t-s)\right)\right]\ast\left[\partial_s^{k-j}\left(s^{k-j}\rho(s)\nabla c(s)\right)\right]\right\|_{L^q}\di s\\
=&
\int_0^t\left\|\nabla G(\cdot,t-s)\ast\left[\partial_s^{k}\left(s^{k}\rho(s)\nabla c(s)\right)\right]\right\|_{L^q}\di s\\
&+
\sum_{j=1}^{k-1}\binom{k}{j}\int_0^t\left\|\left[\partial_t^j\left((t-s)^j\nabla G(\cdot,t-s)\right)\right]\ast\left[\partial_s^{k-j}\left(s^{k-j}\rho(s)\nabla c(s)\right)\right]\right\|_{L^{q}}\di s\\
&+
\int_0^t\left\|\left[\partial_t^k\left((t-s)^k\nabla G(\cdot,t-s)\right)\right]\ast\left(\rho(s)\nabla c(s)\right)\right\|_{L^q}\di s=:S_1+S_2+S_3.
\end{aligned}
\end{equation}
Using Young's convolution inequality with $1+\frac{1}{q}=\frac{1}{a}+\frac{1}{p}$, $1\leq a,~p\leq q$ and identity \eqref{fineestimate2} to obtain
\begin{equation*}
\begin{aligned}
S_1\leq &\int_0^t\|\nabla G(\cdot,t-s)\|_{L^a}\left\| \partial_s^k(s^k\rho(s)\nabla c(s))\right\|_{L^p}\di s\\
\leq&A_0NM^{k-2\delta}k^{k-1}\int_0^t(t-s)^{-\frac{1}{2}-\frac{d}{2}(1-\frac{1}{a})}s^{-\frac{1}{2}-\frac{d}{2}(1-\frac{1}{p})}\di s\\
&+A_0\int_0^t(t-s)^{-\frac{1}{2}-\frac{d}{2}(1-\frac{1}{a})} \Big[\left\| \partial_s^k\left(s^k\rho(s)\right)\nabla c(s)\right\|_{L^p}+\left\|\rho(s) \partial_s^k\left(s^k\nabla c(s)\right)\right\|_{L^p}\Big]\di s.
\end{aligned}
\end{equation*}
Since $1\leq a,~p\leq q<\frac{d}{d-1}$, we have 
$-\frac{1}{2}-\frac{d}{2}\left(1-\frac{1}{a}\right)>-1,\quad -\frac{1}{2}-\frac{d}{2}\left(1-\frac{1}{p}\right)>-1$ and
\begin{multline}\label{eq:SS}
S_1\leq A_0NBM^{k-2\delta}k^{k-1}t^{-\mu}\\
+2AA_0A_1\theta\int_0^t(t-s)^{-\frac{1}{2}-\frac{d}{2}\left(1-\frac{1}{a}\right)}s^{-\frac{d}{2}\left(-1+\frac{1}{a}+\frac{1}{d}\right)}\left\|\partial_s^k\left(s^k\rho(s)\right)\right\|_{L^q}\di s,
\end{multline}
where $\mu$ is defined by \eqref{eq:mu} with $|\beta|=0$, and $B:=\int_0^1(1-s)^{-\frac{1}{2}-\frac{d}{2}\left(1-\frac{1}{a}\right)}s^{-\frac{1}{2}-\frac{d}{2}\left(1-\frac{1}{p}\right)}\di s$. Define
\begin{align*}
h_5(M):=A_0NBM^{-\delta},
\end{align*}
and then for $\phi$ defined by \eqref{eq:phi}, we have
\begin{equation}\label{eq:S1}
\begin{aligned}
S_1\leq  h_5(M)M^{k-\delta}k^{k-1}t^{-\mu}+2AA_0A_1B\theta t^{-\mu}\cdot \phi(t).
\end{aligned}
\end{equation}

Next, we estimate the term $S_2$ in \eqref{eq:case2}. By Young's convolution inequality with $1+\frac{1}{q}=\frac{1}{a}+\frac{1}{p}$ and $1\leq a,~p\leq q$, we have
\begin{equation*}
\begin{aligned}
S_2\leq \sum_{j=1}^{k-1}\binom{k}{j}\int_0^{t}\left\|\partial_s^j\left((t-s)^j\nabla G(\cdot,t-s)\right)\right\|_{L^{a}}\left\|\partial_s^{k-j}\left(s^{k-j}\rho(s)\nabla c(s)\right)\right\|_{L^{p}}\di s.
\end{aligned}
\end{equation*}
Because $1\leq j\leq k-1$, we  use inequalities \eqref{eq:estimate6} and \eqref{fineestimate1} to obtain
\begin{equation}\label{eq:S21}
\begin{aligned}
S_{2}\leq&\sum_{j=1}^{k-1}\binom{k}{j}M_0^jj^{\frac{1}{2}+j+\frac{d}{2}\left(1-\frac{1}{a}\right)}N\left(1+M^\delta\right)M^{k-j-2\delta}(k-j)^{k-j-1}\\
&\qquad \times\int_0^{t}(t-s)^{-\frac{1}{2}-\frac{d}{2}\left(1-\frac{1}{a}\right)}s^{-\frac{1}{2}-\frac{d}{2}\left(1-\frac{1}{p}\right)}\di s\\
\leq&\left[\sup_{1\leq j\leq k-1}\left(\frac{M_0}{M}\right)^jj^{\frac{3}{2}+\frac{d}{2}\left(1-\frac{1}{a}\right)}\right]\lambda NB\left(M^{-\delta}+1\right)M^{k-\delta}k^{k-1}t^{-\mu}.
\end{aligned}
\end{equation}
Set
\begin{align*}
h_6(M):=2\lambda NB\left[\sup_{j\geq1}\left(\frac{M_0}{M}\right)^jj^{\frac{3}{2}+\frac{d}{2}}\right],
\end{align*}
which implies
\begin{align}\label{eq:S2}
S_2 \leq h_6(M)M^{k-\delta}k^{k-1}t^{-\mu}.
\end{align}

For  the term $S_3$ in \eqref{eq:case2}, we use Hardy-Littlewood-Sobolev inequality again with $1+\frac{1}{q}=\frac{1}{a}+\frac{1}{p}$, $1\leq a,~p\leq q$, and H\"older inequality with $\frac{1}{p}=\frac{1}{p_1}+\frac{1}{p_2}$ to obtain
\begin{align*}
S_3\leq& \int_0^{t}\left\|\left[\partial_t^k\left((t-s)^k\nabla G(\cdot,t-s)\right)\right]\right\|_{L^{a}}\left\|\rho(s)\nabla c(s)\right\|_{L^{p}}\di s\\
\leq&\int_0^{t}M_0^kk^{\frac{1}{2}+k+\frac{d}{2}\left(1-\frac{1}{q}\right)}(t-s)^{-\frac{1}{2}-\frac{d}{2}\left(1-\frac{1}{a}\right)}\|\rho(s)\|_{L^{p_1}}\|\nabla c(s)\|_{L^{p_2}}\di s\\
=& A^2B\theta^2M^\delta\left(\frac{M_0}{M}\right)^k k^{\frac{3}{2}+\frac{d}{2}\left(1-\frac{1}{q}\right)}\cdot M^{k-\delta}k^{k-1}t^{-\frac{d}{2}\left(1-\frac{1}{q}\right)},
\end{align*}
where $A$ comes from \eqref{eq:assumptiona}. Set
\begin{align*}
h_7(M):=A^2B\theta^2\sup_{k\geq1}\left[M^\delta\left(\frac{M_0}{M}\right)^kk^{\frac{3}{2}+\frac{d}{2} }\right],
\end{align*}
and then
\begin{align}\label{eq:S3}
S_3 \leq h_7(M)M^{k-\delta}k^{k-1}t^{-\mu}.
\end{align}
Combining \eqref{eq:induction}, \eqref{firstterm}, \eqref{eq:case2}, \eqref{eq:S1}, \eqref{eq:S2}, and \eqref{eq:S3}, we have the following estimate for the second term in \eqref{eq:induction}:
\begin{equation}\label{eq:case22}
\begin{aligned}
\left\|\partial_t^k(t^k \rho(t))\right\|_{L^q}\leq\tilde{h}(M)M^{k-\delta}k^{k-1}t^{-\mu}+2AA_0A_1B\theta t^{-\mu}\cdot\phi(t),
\end{aligned}
\end{equation}
where $\tilde{h}(M)=h_1(M)+h_5(M)+h_6(M)+h_7(M)$. Moreover, we have $\tilde{h}(M)\to0$ as $M\to\infty$. Therefore,
\begin{align}\label{eq:gronwall}
\phi(t)\leq \tilde{h}(M)M^{k-\delta}k^{k-1}+2AA_0A_1B\theta \cdot\phi(t).
\end{align}
When $\theta$ is small enough, we have
\[
\phi(t)\leq \frac{\tilde{h}(M)}{1-2AA_0A_1B\theta}M^{k-\delta}k^{k-1}.
\]
Hence, inequality \eqref{eq:Linduction2} holds for $\bar{h}(M)=\frac{\tilde{h}(M)}{1-2AA_0A_1B\theta}$.
This finishes the proof of Step 3. 

\

\noindent\textbf{Step 4 ($|\beta|=0$, $k=L$, $1< q< \infty$):}
Since the proof of this part is similar to the proof of Theorem \ref{thm:regularity1} (or the proof of Step 2), we will omit it here.

\

\noindent \textbf{Part II ($q=\infty$ and $q=1$):} We will only sketch the main idea for $q=\infty$ and the proof of $q=1$ is similar. The proof shares some similarity with the proof of the case $q=\infty$ in Theorem \ref{thm:regularity1}.  From Part I, there exists $M>0$ such that
\begin{align}\label{eq:Linduction3}
\left\|D_x^\beta\partial_t^k\left(t^k\rho(t)\right)\right\|_{L^{q}}\leq M^{|\beta|+k}(|\beta|+k)^{|\beta|+k}t^{-\mu},\quad 1< q<\infty.
\end{align}
Since the constant $M$ might depend on $q$, we can not pass to the limit $q\to\infty$ to obtain the estimate for $L^\infty$ norm (also see Remark \ref{rmk:q}).
We are going to use the results for $1<q<\infty$ to prove \eqref{eq:Linduction} for $q=\infty$. We only need to estimate the $L^\infty$ norm for the second term in \eqref{eq:induction}. From \eqref{shijianjisuan}, we have
\begin{align*}
&\left\|D_x^\beta\partial_t^k\left[t^k\int_0^t\nabla G(\cdot,t-s)\ast\left[\rho(s)\nabla c(s)\right]\di s\right]\right\|_{L^\infty}\\
=&\left\|D_x^\beta\sum_{j=0}^k\binom{k}{j}\int_0^t\left[\partial_t^j((t-s)^j\nabla G(\cdot,t-s))\right]\ast\left[\partial_s^{k-j}(s^{k-j}\rho(s)\nabla c(s))\right]\di s\right\|_{L^\infty}\\
\leq&\left\|\sum_{j=0}^k\binom{k}{j}\int_0^{t/2}\left[D_x^\beta\partial_t^j((t-s)^j\nabla G(\cdot,t-s))\right]\ast\left[\partial_s^{k-j}(s^{k-j}\rho(s)\nabla c(s))\right]\di s\right\|_{L^\infty}\\
&+\left\|\sum_{j=0}^k\binom{k}{j}\int_{t/2}^t\left[\partial_t^j((t-s)^j\nabla G(\cdot,t-s))\right]\ast\left[D_x^\beta\partial_s^{k-j}(s^{k-j}\rho(s)\nabla c(s))\right]\di s\right\|_{L^\infty}\\
\leq&\sum_{j=0}^k\binom{k}{j}\int_0^{t/2}\left\|D_x^\beta\partial_t^j((t-s)^j\nabla G(\cdot,t-s))\right\|_{L^\infty}\left\|\partial_s^{k-j}(s^{k-j}\rho(s)\nabla c(s))\right\|_{L^1}\di s \\
&+ \sum_{j=0}^k\binom{k}{j}\int_{t/2}^t\left\|\partial_t^j((t-s)^j\nabla G(\cdot,t-s))\right\|_{L^{\frac{2d}{2d-1}}}\left\|D_x^\beta\partial_s^{k-j}(s^{k-j}\rho(s)\nabla c(s))\right\|_{L^{2d}}\di s.
\end{align*}
We only need to use \eqref{eq:estimate6} and an inequality similar to \eqref{fineestimate1} to estimate the above two terms separately. Then, we could obtain \eqref{eq:Linduction3} for $q=\infty$ with some constant slightly bigger than $M$.

\end{proof}
\begin{remark}[Without assumption \eqref{eq:assumptiona}]\label{rmk:assumptiona}
We can also prove Step 1 and Step 2 in Theorem \ref{thm:analytic} without the assumption \eqref{eq:assumptiona}, which implies the spatial analyticity for any initial data $\rho_0\in L^1(\mathbb{R}^d)$. 

\end{remark}
\begin{remark}[Radius of analyticity]\label{rmk:radius}
For any dimension $d$ and a multi-index $\alpha\in\mathbb{N}^d$ with $|\alpha|=m$, from the polynomial identity $(\sum_{i=1}^dx_i)^m=\sum_{|\gamma|=\alpha}\binom{m}{\gamma}\Pi_{i=1}^dx_i^{\gamma_i}$ for $x_i=1$, we have
\begin{align}\label{eq:factorial}
\alpha !\leq m!\leq d^m\alpha !.
\end{align}
Notice that for initial datum $\rho_0$ with $\|\rho_0\|_{L^1}$ small enough, the constant $M$ is independent of time. Combining \eqref{eq:factorial} and   Stirling's formula, we have the following estimate for the radius $r(t)$ of analyticity  of the mild solution:
\begin{align}\label{eq:radius}
r(t)=\lim_{|\beta|+k\to\infty}\left(\frac{\|D_x^\beta\partial_t^k\rho(t)\|_{L^\infty}}{\beta ! k!}\right)^{-\frac{1}{|\beta|+k}}\geq C\min\{\sqrt{t},t\},\quad t\in(0,\infty).
\end{align}
Similarly, for the space analytic radius $r_1(t)$, we have $r_1(t)\geq C\sqrt{t}$, and for the time analytic radius $r_2(t)$, we have $r_2(t)\geq Ct$ for any $t>0$.
\end{remark}
\begin{remark}\label{rmk:q}
In the proof of \eqref{eq:analytic}, the constant $M$ might depend on $q$. However, we can remove the dependence on $q$ by some more careful estimates. The idea was used to prove $q=\infty$ in Theorem \ref{thm:regularity1}. We can use the results of \eqref{eq:analytic} for $q$ in bounded interval $[1,4d]$ (which yields a uniform constant $M$) to do estimates for all the rest $q>4d$ (as we do for $q=\infty$; see Step 1 and Step 2 in the proof of Theorem \ref{thm:regularity1}).
\end{remark}
\begin{remark}[Global solutions]
In \cite{wei2018global}, Dongyi Wei provided a method to globally extend the mild solutions to the PKS equations for any initial data  $0\leq \rho_0\in L^1(\mathbb{R}^2)$ satisfying $\|\rho_0\|_{L^1}\leq 8\pi$. The method in \cite{wei2018global} can be directly generalized to the mPKS equations  \eqref{eq:KS} for any dimension $d>0$ and global mild solutions will be obtained for any initial data  $0\leq \rho_0\in L^1(\mathbb{R}^d)$ satisfying $\|\rho_0\|_{L^1}\leq 2d^2\pi$. 
\end{remark}

We have the following corollary  (corresponding to Theorem \ref{thm1} (ii)) for bounded mild solutions:
\begin{corollary}\label{cor:allinitial}
Let $0\leq \rho_0\in L^{1}(\mathbb{R}^d)$ and $\rho(t)$ be the mild solution to \eqref{eq:KS} in $[0,T]$.  Let $\beta\in\mathbb{N}^d$ be a multi-index and $k\in\mathbb{N}$ be a nonnegative integer. Assume 
\begin{align}\label{eq:bound}
|\rho(x,t)|\leq C,\quad (x,t)\in\mathbb{R}^d\times[0,T]
\end{align}
for some constant $C$. Then there exists a constant  $M$ depending on $T$ but independent of $\beta$, $k$, such that 
\begin{align}\label{eq:analytic1}
\left\|D_x^\beta\partial_t^k\rho(t)\right\|_{L^q}\leq M^{|\beta|+k}\left(|\beta|+k\right)^{|\beta|+k}t^{-\frac{|\beta|}{2}-k-\frac{d}{2}(1-\frac{1}{q})}
\end{align}
holds for $1\leq q\leq\infty$, $ t\in(0,T]$, and $|\beta|+k>0$.

\end{corollary}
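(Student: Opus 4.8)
The plan is to deduce \eqref{eq:analytic1} from a localized-in-time version of Theorem~\ref{thm:analytic}, with the boundedness hypothesis \eqref{eq:bound} playing the part that smallness of $\|\rho_0\|_{L^1}$ (through assumption \eqref{eq:assumptiona}) played there, and then to propagate the resulting estimate to all of $(0,T]$. First I would record the bounds that \eqref{eq:bound} supplies: since $\rho\in X_T\subset C_b((0,T];L^1(\mathbb R^d))$, the mass $m:=\sup_{t\in(0,T]}\|\rho(t)\|_{L^1}$ is finite, so interpolation between $L^1$ and $L^\infty$ gives $\|\rho(t)\|_{L^q}\le K:=\max\{C,m\}$ for every $q\in[1,\infty]$ and $t\in(0,T]$; in parallel, the hypercontractive estimate \eqref{eq:qq} — valid with no smallness, by the remark after Theorem~\ref{thm:regularity1} — gives the decaying bound $\|\rho(t)\|_{L^q}\le C_T t^{-\frac d2(1-\frac1q)}$ with $C_T=C_T(T,C,m)$. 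These two bounds are what I would substitute for the hypercontractivity \eqref{eq:q} everywhere in the proof of Theorem~\ref{thm:analytic}.

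Next I would re-examine that proof to isolate where small mass genuinely enters. In the steps involving a spatial derivative ($|\beta|>0$; Steps~1--2 of the proof of Theorem~\ref{thm:analytic}) smallness served only to force an absorption constant below $\tfrac12$, but there the time integral is split at $\varepsilon=(K|\beta|)^{-1}$ and the quantities $J_1(\varepsilon),J_2(\varepsilon)$ drop below any prescribed threshold once the integer $K$ is enlarged, the remaining factors being controlled by $C_T$; thus \eqref{eq:Linduction}-type bounds for $|\beta|>0$ hold with $T$-dependent constants (this is the mechanism of Remark~\ref{rmk:assumptiona}). The genuine obstruction is the pure time-derivative case ($|\beta|=0$; Steps~3--4), where no $\varepsilon$-splitting is available and the Grönwall inequality \eqref{eq:gronwall} closes only when $2AA_0A_1B\theta<1$. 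Here I would rerun Steps~3--4 on a short interval $(0,\tau]$: for a mild solution issued from any datum $g$ with $\|g\|_{L^1}+\|g\|_{L^\infty}\le 2K$, the analogue of the coefficient $2AA_0A_1B\theta$ in \eqref{eq:gronwall} is, by \eqref{eq:key}--\eqref{eq:key1}, bounded on an interval of length $\tau$ by $C\big(\tau^{\frac d2(1-\frac1r)}\|g\|_{L^r}+\tau^{1/2}\|g\|_{L^{\frac{2d}{2d-1}}}^2\big)\le C\big(\tau^{\frac d2(1-\frac1r)}K+\tau^{1/2}K^2\big)$ for a fixed $r\in(1,\tfrac d{d-1})$ — the first term from $\sigma^{\frac d2(1-\frac1r)}\|G(\cdot,\sigma)\ast g\|_{L^r}\le\tau^{\frac d2(1-\frac1r)}\|g\|_{L^r}$ (Young's inequality), the second from the nonlinear correction estimated as in Step~1 of the proof of Theorem~\ref{thm:regularity1}. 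Choosing $\tau=\tau(K,d)>0$ small enough (which, via Theorem~\ref{thm:contraction}, also guarantees $\rho^g$ exists on $(0,\tau]$ for all such $g$) makes that coefficient $<1$ and closes the Grönwall; the crucial point is that $\tau$ depends on the datum only through $K$. Together with the $|\beta|>0$ steps and the data-independent conversion used in the ``Claim'' of the proof of Theorem~\ref{thm:analytic}, this produces a local statement $(\star)$: for every $g$ with $\|g\|_{L^1}+\|g\|_{L^\infty}\le 2K$, the mild solution $\rho^g$ obeys $\|D_x^\beta\partial_\sigma^k\rho^g(\sigma)\|_{L^q}\le M_\star^{|\beta|+k}(|\beta|+k)^{|\beta|+k}\sigma^{-\frac{|\beta|}2-k-\frac d2(1-\frac1q)}$ for $\sigma\in(0,\tau]$, $1\le q\le\infty$, $|\beta|+k>0$, with $M_\star=M_\star(K,d)$.

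Finally I would propagate. Set $\tau_\ast:=\min\{\tfrac12\tau(K,d),\,\tau_0\}$, where $\tau_0=\tau_0(\rho_0,d)$ is small enough — using \eqref{eq:key1} applied to $\rho_0\in L^1(\mathbb R^d)$ — that the local analysis above closes on $(0,\tau_0]$ starting from $\rho_0$ itself; on $(0,\tau_\ast]$ this already gives \eqref{eq:analytic1} with some constant $M_0$. For $t\in[\tau_\ast,T]$ put $t_0:=t-\tfrac12\tau_\ast\in[\tfrac12\tau_\ast,T]$; then $\rho(t_0)\in L^1\cap L^\infty(\mathbb R^d)$ with $\|\rho(t_0)\|_{L^1}+\|\rho(t_0)\|_{L^\infty}\le m+C\le 2K$, so $(\star)$ applied to the time-translate $\sigma\mapsto\rho(t_0+\sigma)$ (which, by uniqueness, is the mild solution with datum $\rho(t_0)$) and evaluated at $\sigma=t-t_0=\tfrac12\tau_\ast\in(0,\tau(K,d)]$ yields $\|D_x^\beta\partial_t^k\rho(t)\|_{L^q}\le M_\star^{|\beta|+k}(|\beta|+k)^{|\beta|+k}(\tfrac12\tau_\ast)^{-\mu-k}$ with $\mu=\tfrac{|\beta|}2+\tfrac d2(1-\tfrac1q)$. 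Since $\tfrac12\tau_\ast\ge\tfrac{\tau_\ast}{2T}\,t$ (as $t\le T$) and $\mu+k\le|\beta|+k+\tfrac d2$ (assuming w.l.o.g. $\tfrac{2T}{\tau_\ast}\ge1$), one gets $(\tfrac12\tau_\ast)^{-\mu-k}\le(\tfrac{2T}{\tau_\ast})^{\mu+k}t^{-\mu-k}\le(\tfrac{2T}{\tau_\ast})^{d/2}\big(\tfrac{2T}{\tau_\ast}\big)^{|\beta|+k}t^{-\mu-k}$, and absorbing the prefactor into the base (valid for $|\beta|+k\ge1$) gives \eqref{eq:analytic1} on $[\tau_\ast,T]$ with $M=M_\star(2T/\tau_\ast)^{1+d/2}$; taking the larger of this and $M_0$ proves the corollary with $M=M(T)$ (depending also on $C$, $\rho_0$, $d$, but not on $\beta$ or $k$). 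The main obstacle, and the only non-routine ingredient, is making the pure time-derivative Grönwall estimate \eqref{eq:gronwall} close without small mass; the device above — localize in time, use \eqref{eq:key}--\eqref{eq:key1}, and exploit \eqref{eq:bound} to get a \emph{uniform} bound on the restarted data — is exactly what removes that difficulty.
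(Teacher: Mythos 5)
Your proposal is correct, but it takes a genuinely different route from the paper. The paper's proof (Appendix~\ref{app:proof}) keeps the estimate on all of $[0,T]$: after replacing the hypercontractivity bound $\|\rho(s)\|_{L^q}\leq A\theta s^{-\frac d2(1-\frac1q)}$ by the trivial bound $\sup_q\|\rho(s)\|_{L^q}\leq C_1$, it no longer tries to push the absorption constant below one; instead it iterates the weakly singular integral inequality $\varphi(t)\leq H+2A_1C_1\int_0^t(t-s)^{-\mu_a}\varphi(s)\,ds$ enough times that the kernel exponent $2^{i_0}(1-\mu_a)-1$ becomes nonnegative, and then closes with a classical Gr\"onwall lemma, yielding an exponential $T$-dependent constant. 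Your proposal instead preserves the paper's original absorption mechanism by observing that on a short interval $(0,\tau]$ the bound $\|\rho(s)\|_{L^p}\leq K$ ($p>1$) can be rewritten as $\|\rho(s)\|_{L^p}\leq K\tau^{\frac d2(1-\frac1p)}s^{-\frac d2(1-\frac1p)}$, so the ``effective $A\theta$'' is $K\tau^{\frac d2(1-\frac1p)}$ and is small for small $\tau$ (with $\tau$ depending only on $K$ and $d$), after which the Gr\"onwall term is absorbed as in the small-mass case; you then propagate via the semigroup property of the Duhamel formula, restarting at $t_0=t-\tau_*/2$ for each $t\in[\tau_*,T]$ and converting the resulting fixed-time bound $\sigma=\tau_*/2$ into a decay $t^{-\frac{|\beta|}{2}-k-\frac d2(1-\frac1q)}$ by the base-shift $\tfrac12\tau_*\geq(\tau_*/2T)\,t$, absorbing $(2T/\tau_*)^{\mu+k}$ into the exponential constant.

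The two methods buy different things. The paper's iterated-Gr\"onwall argument is self-contained on $[0,T]$ and avoids the need to verify the semigroup/restart compatibility of the mild formulation; its cost is that it discards the singular weight in $s$, which requires running Step~3 afresh and tracking new $T$-dependent Beta-function constants through the iteration. Your localize-and-propagate scheme keeps the original proof structure of Theorem~\ref{thm:analytic} essentially verbatim (only the $\theta$ changes), so the constants are easier to trace; the price is the extra verification that $\rho(t_0+\cdot)$ is the mild solution with datum $\rho(t_0)\in L^1\cap L^\infty$ (uniqueness in the class of Theorem~\ref{thm:contraction}) and the base-shift bookkeeping. One small point you should make explicit: the hypothesis $|\rho(x,t)|\leq C$ on $\mathbb{R}^d\times[0,T]$ already forces $\rho_0\in L^\infty$, so the initial interval $(0,\tau_0]$ can be handled exactly like the restarted intervals without a separate appeal to \eqref{eq:key1}; and the direct Gr\"onwall closure is only needed for $1<q<\tfrac d{d-1}$ (the paper's Step~3), with the remaining $q$ treated by bootstrapping as in Steps~2 and~4 of Theorem~\ref{thm:analytic} — you implicitly rely on this but do not say it.
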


\begin{proof}
The proof shares lots of similarities with the proof of Theorem \ref{thm:analytic}. The main difference comes from  the assumption \eqref{eq:bound}. We will put the proof in Appendix \ref{app:proof}.
 
\end{proof}

\noindent\textbf{Acknowledgements} 
Y. Gao was supported by the Start-Up Fund from the Hong Kong Polytechnic University. X. Xue was supported by the Chinese Natural Science Foundation grants 11731010 and 11671109.

\appendix

\section{Proof of time continuity of $\rho_2(t)$ in \eqref{eq:rho12}}\label{app:continuity}
\begin{proof}
Let $t>\tau>0$ and we have
\begin{multline*}
\|\rho_2(t)-\rho_2(\tau)\|_{L^{1}}\leq \left\|\int_\tau^t\nabla G(\cdot,t-s)\ast(\rho(s)\nabla c(s))\di s\right\|_{L^{1}}\\
+\left\|\int_0^\tau\nabla G(\cdot,t-s)\ast(\rho(s)\nabla c(s))-\nabla G(\cdot,\tau-s)\ast(\rho(s)\nabla c(s))\di s\right\|_{L^1}=:I_1+I_2.
\end{multline*}
For $I_1$, we have
\begin{align*}
I_1=\left\|\int_\tau^t\nabla G(\cdot,t-s)\ast(\rho(s)\nabla c(s))\di s\right\|_{L^1}\leq C\theta^2\int_{\tau/t}^1(1-s)^{-\frac{1}{2}}s^{-\frac{1}{2}}\di s\to 0~\textrm{ as }~t\to\tau.
\end{align*}
For $I_2$, set $g(x,s):=\nabla G(\cdot,\tau-s)\ast(\rho(s)\nabla c(s))$, $0<s<\tau$ and then
\[
\nabla G(\cdot,t-s)\ast(\rho(s)\nabla c(s))=G(\cdot,t-\tau)\ast g(s).
\]
We have
\begin{equation}\label{eq:I2}
\begin{aligned}
I_2=\left\|\int_0^\tau G(\cdot,t-\tau)\ast g(s)-g(s)\di s\right\|_{L^{1}}\leq\int_0^\tau\left\| G(\cdot,t-\tau)\ast g(s)-g(s)\right\|_{L^{1}}\di s.
\end{aligned}
\end{equation}
Next, we estimate the integrand $\left\| G(\cdot,t-\tau)\ast g(s)-g(s)\right\|_{L^{1}}$. For arbitrary $r>0$, we have 
\begin{equation}\label{eq:sindpendent}
\begin{aligned}
&\left\| G(\cdot,t-\tau)\ast g(s)-g(s)\right\|_{L^{1}}\\
=&\int_{\mathbb{R}^d}\left|\int_{\mathbb{R}^d}G(x-y,t-\tau)[g(y,s)-g(x,s)]\di y\right|\di x\\
\leq&\int_{\mathbb{R}^d}\int_{\mathbb{R}^d}G(x-y,t-\tau)|g(y,s)-g(x,s)|\di y\di x\\
\leq &\int_{\mathbb{R}^d}\int_{B(x,r)}G(x-y,t-\tau)|g(y,s)-g(x,s)|\di y\di x\\
&+\int_{\mathbb{R}^d}\int_{\mathbb{R}^d\setminus B(x,r)}G(x-y,t-\tau)|g(y,s)-g(x,s)|\di y\di x=: I_{21}+I_{22}.
\end{aligned}
\end{equation}
For the first integral in \eqref{eq:sindpendent}, we have
\begin{equation}\label{eq:firstterm}
\begin{aligned}
I_{21}=&\int_{\mathbb{R}^d}\int_{B(x,r)}G(x-y,t-\tau)|g(y,s)-g(x,s)|\di y\di x\\
=&\int_{\mathbb{R}^d}\int_{B(0,r)}G(z,t-\tau)|g(x+z,s)-g(x,s)|\di z\di x\\
= &\int_{B(0,r)}G(z,t-\tau)\int_{\mathbb{R}^d}|g(x+z,s)-g(x,s)|\di x\di z\\
\leq& \int_{B(0,r)}G(z,t-\tau)\sup_{h\in\mathbb{R}^d,|h|<r}\int_{\mathbb{R}^d}|g(x+h,s)-g(x,s)|\di x\di z\\
\leq&\sup_{h\in\mathbb{R}^d,|h|<r}\|g(\cdot+h,s)-g(\cdot,s)\|_{L^{1}}.
\end{aligned}
\end{equation}
Notice that $\nabla G(x,t)=t^{-\frac{d+1}{2}}\nabla G\left(t^{-\frac{1}{2}}x,1\right)$. Denote
\[
f_h(x)=(\tau-s)^{-\frac{d+1}{2}}\left|\nabla G\big((\tau-s)^{-\frac{1}{2}}(x+h),1\big)-\nabla G\big((\tau-s)^{-\frac{1}{2}}x,1\big)\right|.
\]
By the definition of $g$, we have
\begin{equation}\label{eq:sindependet1}
\begin{aligned}
I_{21}\leq& \sup_{|h|\leq r}\|f_h\ast (\rho(s)\nabla c(s))\|_{L^{1}}\leq \sup_{|h|\leq r}\|f_h\|_{L^1}\|\rho(s)\nabla c(s)\|_{L^{1}}\\	\leq&C\theta^2(\tau-s)^{-\frac{1}{2}}s^{-\frac{1}{2}}\sup_{|h|\leq r}\left\| \nabla G(\cdot+h,1)-\nabla G(\cdot,1)\right\|_{L^1}.
\end{aligned}
\end{equation}
Due to $G(x,t)=t^{-\frac{d}{2}}G(t^{-\frac{1}{2}}x,1)$, we obtain
\begin{equation}\label{eq:secondterm}
\begin{aligned}
I_{22}=&\int_{\mathbb{R}^d}\int_{\mathbb{R}^d\setminus B(x,r)}G(x-y,t-\tau)|g(y,s)-g(x,s)|\di y\di x\\
\leq& \int_{\mathbb{R}^d}\int_{\mathbb{R}^d\setminus B(0,r/(t-\tau)^{\frac{1}{2}})}G(z,1)|g(x-(t-\tau)^{\frac{1}{2}}z,s)-g(x,s)|\di z\di x\\
\leq &2\|g(s)\|_{L^{1}}\int_{\mathbb{R}^d\setminus B(0,r/(t-\tau)^{\frac{1}{2}})}G(z,1)\di z.
\end{aligned}
\end{equation}
By $\|\nabla G(\cdot,t)\|_{L^1}=t^{-\frac{1}{2}}\|\nabla G(\cdot,1)\|_{L^1}$
and Young's convolution inequality, we obtain
\begin{equation}\label{eq:sindependet0}
\begin{aligned}
\|g(s)\|_{L^{1}}\leq \|\nabla G(\cdot,\tau-s)\|_{L^1}\|\rho(s)\nabla c(s)\|_{L^{1}}\leq C\theta^2(\tau-s)^{-\frac{1}{2}}s^{-\frac{1}{2}}.
\end{aligned}
\end{equation}
Combining \eqref{eq:I2}-\eqref{eq:sindependet0}, we obtain
\begin{equation}
\begin{aligned}
I_2
\leq &C\theta^2\int_0^\tau (\tau-s)^{-\frac{1}{2}}s^{-\frac{1}{2}}\di s\sup_{|h|\leq r}\|\nabla G(\cdot+h,1)-\nabla G(\cdot,1)\|_{L^1}\\
&\qquad\qquad+C\theta^2\int_0^\tau (\tau-s)^{-\frac{1}{2}}s^{-\frac{1}{2}}  \di s  \int_{\mathbb{R}^d\setminus B(0,r/(t-\tau)^{\frac{1}{2}})}G(z,1)\di z.%\right)^{\frac{d}{2}}.
\end{aligned}
\end{equation}
By \cite[Lemma 4.3]{brezis}, send $t\to\tau$ first and then $r\to0$, we have $I_1+I_2\to0$.
\end{proof}

\section{Proof of Corollary \ref{cor:allinitial}}\label{app:proof}

\begin{proof}[Proof of Corollary \ref{cor:allinitial}] 
Although we do not have the assumption \eqref{eq:assumptiona}, the same calculations show that Step 1 and Step 2 in the proof of Theorem \ref{thm:analytic} still hold.
Therefore, we only need to revise Step 3, i.e., time analyticity ($\beta=0$ in \eqref{eq:analytic1}) in the proof of Theorem \ref{thm:analytic}. 
Similarly, we will use induction to prove that there exists a constant $M$ depending on $T$ but independent of $k$  such that 
\begin{align}\label{eq:assume}
\left\|\partial_t^k\left(t^k\rho(t)\right)\right\|_{L^q}\leq M^{k-\delta}k^{k-1},\quad k\geq 1,\quad 1<q<\infty,\quad t\in[0,T].
\end{align}
For the cases $q=\infty$ and $q=1$, we can use the same proof as Part II in Theorem \ref{thm:analytic}. Then, combining Step 1, Step 2 in Theorem \ref{thm:analytic} and the above estimates \eqref{eq:assume}, we can obtain \eqref{eq:analytic1} immediately in a bounded time interval $(0,T]$. 

Due to the assumption \eqref{eq:bound} and the conservation of mass $\|\rho(t)\|_{L^1}$, we have
\begin{align}\label{eq:bounds}
\sup_{1\leq q\leq\infty}\left\|\rho(t)\right\|_{L^q}\leq C_1:=\max\{\|\rho(t)\|_{L^\infty},~\|\rho(t)\|_{L^1}\}.
\end{align}
Next, we separate the proof into two parts. In Part I, we show the first step of induction for \eqref{eq:assume}, i.e., the case $k=1$. In Part II, we prove \eqref{eq:assume} for any $k\geq1.$

\

\noindent \textbf{Part I ($k=1$ and $1<q<\infty$)} In this part, we are going to prove \eqref{eq:assume} for $k=1$ and $1<q<\infty$. We separate this part into two steps. In the first step, we prove that there exist a constant $C_T$ depending on $T$ such that
\begin{align}\label{eq:result1}
\left\|\partial_t\left(t\rho(t)\right)\right\|_{L^q}\leq C_T,\quad 1< q<\frac{d}{d-1},\quad t\in[0,T].
\end{align}
In the second step, we show that \eqref{eq:result1} holds for $1<q<\infty$.

\textbf{Step 1.} In this step, we prove \eqref{eq:result1} for $1<q<\frac{d}{d-1}$. Direct calculation shows
\begin{equation}\label{eq:c1}
\begin{aligned}
\left\|\partial_t\left(t\rho(t)\right)\right\|_{L^q}\leq&\left\|\partial_t\left(tG(\cdot,t)\ast\rho_0\right)\right\|_{L^q}+\left\|\partial_t\left[t\int_0^t\nabla G(\cdot,t-s)\ast[\rho(s)\nabla c(s)]\right]\di s\right\|_{L^q}\\
=&:I_1+I_2.
\end{aligned}
\end{equation}
For $I_1$, by Young's inequality and \eqref{eq:estimate5}, we have 
\begin{align}\label{eq:c2}
I_1\leq\left\|\partial_t(tG(\cdot,t))\right\|_{L^1}\|\rho_0\|_{L^q}\leq M_0C_1.
\end{align}
For $I_2$, we use \eqref{shijianjisuan} to obtain
\begin{multline}\label{eq:c3}
I_2\leq\int_0^t\left\|\nabla G(\cdot,t-s)\ast\left[\partial_s(s\rho(s)\nabla c(s))\right]\right\|_{L^q}\di s\\
+\int_0^t\left\|\left[\partial_t\left((t-s)\nabla G(\cdot,t-s)\right)\right]\ast[\rho(s)\nabla c(s)]\right\|_{L^q}\di s=:I_{21}+I_{22}.
\end{multline}
Using Young's inequality with $1+\frac{1}{q}=\frac{1}{a}+\frac{1}{p}$ and $1\leq a,~p\leq q<\frac{d}{d-1}$, we can obtain
\begin{equation}\label{eq:c4}
\begin{aligned}
I_{21}=&\int_0^t\left\|\nabla G(\cdot,t-s)\ast\left[\partial_s(s\rho(s)\nabla(s))\right]\right\|_{L^q}\di s\\
\leq&\int_0^t\left\|\nabla G(\cdot,t-s)\right\|_{L^a}\left\|\partial_s(s\rho(s)\nabla(s))\right\|_{L^p}\di s\\
\leq&\int_0^t(t-s)^{-\mu_a}\left(\left\|\rho(s)\partial_s(s\nabla c(s))\right\|_{L^p}\left\|\partial_s(s\rho(s))\nabla c(s)\right\|_{L^p}+\left\|\rho(s)\nabla c(s)\right\|_{L^p}\right)\di s\\
\leq&\frac{C_1^2}{1-\mu_a}T^{1-\mu_a}+\int_0^t(t-s)^{-\mu_a}\|\rho(s)\|_{L^{\frac{1}{2-\frac{1}{a}-\frac{1}{d}}}}\|\partial_s(s\nabla c(s))\|_{L^{\frac{1}{\frac{1}{q}+\frac{1}{d}-1}}}\di s\\
&\qquad\qquad\qquad\qquad\qquad\qquad+\int_0^t(t-s)^{-\mu_a}\|\partial_s(s\rho(s))\|_{L^q}\|\nabla c(s)\|_{L^{\frac{1}{\frac{1}{p}-\frac{1}{q}}}}\di s\\
\leq&\frac{C_1^2}{1-\mu_a}T^{1-\mu_a}+2A_1C_1\int_0^t(t-s)^{-\mu_a}\|\partial_s(s\rho(s))\|_{L^q}\di s,
\end{aligned}
\end{equation}
where $A_1$ is a constant comes from Hardy-Littlewood-Sobolev inequality and $\mu_a:=-\frac{1}{2}-\frac{d}{2}\left(1-\frac{1}{a}\right)$. Use the similar estimate, and we have
\begin{align}\label{eq:c5}
I_{22}\leq\frac{A_1C_1^2M_0}{1-\mu_a}T^{1-\mu_a}.
\end{align}
Denote
\begin{align*} 
\varphi(t):=\|\partial_t(t\rho(t))\|_{L^q}.
\end{align*}
Combining inequalities \eqref{eq:c1}, \eqref{eq:c2}, \eqref{eq:c3}, \eqref{eq:c4} and \eqref{eq:c5}, we have the following estimate
\begin{align*}
\varphi(t)\leq H+2A_1C_1\int_0^t(t-s)^{-\mu_a}\varphi(s)\di s,
\end{align*}
where $H:=M_0C_1+\frac{C_1^2}{1-\mu_a}T^{1-\mu_a}+\frac{A_1C_1^2M_0}{1-\mu_a}T^{1-\mu_a}$ is a constant. We get by a second iteration and using Fubini's theorem that
\begin{equation}\label{eq:c6}
\begin{aligned}
\varphi(t)\leq&H+2A_1C_1\int_0^t(t-s)^{-\mu_a}\left[H+2A_1C_1\int_0^s(s-\tau)^{-\mu_a}\varphi(\tau)\di \tau\right]\di s\\
\leq&\left(H+\frac{2A_1C_1H}{1-\mu_a}T^{1-\mu_a}\right)+4A_1^2C_1^2\int_0^t(t-s)^{-\mu_a}\int_0^s(s-\tau)^{-\mu_a}\varphi(\tau)\di \tau\di s\\
=&\left(H+\frac{2A_1C_1H}{1-\mu_a}T^{1-\mu_a}\right)+4A_1^2C_1^2\int_0^t\varphi(\tau)\int_{\tau}^t(t-s)^{-\mu_a}(s-\tau)^{-\mu_a}\di s\di \tau\\
=&\left(H+\frac{2A_1C_1H}{1-\mu_a}T^{1-\mu_a}\right)+4A_1^2C_1^2\mathcal{B}(1-\mu_a,1-\mu_a)\int_0^t(t-\tau)^{1-2\mu_a}\varphi(\tau)\di \tau\\
=&:H_1^{(1)}+H_2^{(1)}\int_0^t(t-\tau)^{1-2\mu_a}\varphi(\tau)\di \tau.
\end{aligned}
\end{equation}
Repeat the process of inequality \eqref{eq:c6} and at the $i$th time, we get
\begin{align*}
\varphi(t)\leq H_1^{(i)}+H_2^{(i)}\int_0^t(t-s)^{2^i(1-\mu_a)-1}\varphi(s)\di s.
\end{align*}
Recall that $\frac{1}{2}<\mu_a<1$, then there exists $i_0$ big enough such that $2^{i_0}(1-\mu_a)-1\geq 0$. Because $T\geq t-s$, we have
\begin{align*}
\varphi(t)\leq H_1^{(i_0)}+H_2^{(i_0)}T^{2^{i_0}(1-\mu_a)-1}\int_0^t\varphi(s)\di s.
\end{align*}
Using Gr\"onwall inequality, we obtain
\begin{align*}
\varphi(t)\leq C_T:=H_1^{(i_0)}\exp\{H_2^{(i_0)}T^{2^{i_0}(1-\mu_a)}\}.
\end{align*}

\textbf{Step 2.}  The proof of inequality \eqref{eq:result1} for $\frac{d}{d-1}\leq q<\infty$ is similar to the proof of Theorem \ref{thm:regularity1} (or Step 2 of the proof of Theorem \ref{thm:analytic}), and we omit it. 

\

\noindent \textbf{Part II ($k=L\geq 2$ and $1<q<\infty$):} In this part, we assume that there exists a constant $M$ (to be fixed) independent of $k$ such that \eqref{eq:assume} holds for $0 <k\leq L-1$ for some $L\geq2$. We are going to prove that \eqref{eq:assume} also holds for $k = L$ with the same $M$. 

\textbf{Step 1.}
In this step, we assume $k=L$, and we are going to prove 
\begin{align}\label{eq:hm}
\left\|\partial_t^k\left(t^k\rho(t)\right)\right\|_{L^q}\leq \bar{h}_0(M)M^{k-\delta}k^{k-1}~\textrm{ for }~1<q<\frac{d}{d-1},
\end{align}
where $\bar{h}_0$ is independent of $q$ and $k$, and satisfies $\bar{h}_0(M)\to 0$ as $M\to \infty$.
Inequality \eqref{eq:case2} shows that
\begin{equation}\label{eq:direct}
\begin{aligned}
\left\|\partial_t^k\left(t^k\rho(t)\right)\right\|_{L^q}\leq&\left\|\partial_t^k\left[t^kG(\cdot,t)\ast\rho_0\right]\right\|_{L^q}+\left\|\partial_t^k\left[t^k\int_0^t\nabla G(\cdot,t-s)\ast\left(\rho(s)\nabla c(s)\right)\di s\right]\right\|_{L^q}\\
\leq&\left\|\partial_t^k\left[t^kG(\cdot,t)\ast\rho_0\right]\right\|_{L^q}+S_1+S_1+S_3.
\end{aligned}
\end{equation}
Using inequality \eqref{eq:estimate5} and Young's inequality, we obtain
\begin{align*}
\left\|\partial_t^k\left[t^kG(\cdot,t)\ast\rho_0\right]\right\|_{L^q}\leq\left\|\partial_t^k\left(t^kG(\cdot,t)\right)\right\|_{L^1}\left\|\rho_0\right\|_{L^q}\leq C_1M^{\delta}\left(\frac{M_0}{M}\right)^kkM^{k-\delta}k^{k-1}.
\end{align*}
For any $M>M_0$, notice that $C_1M^{\delta}\left(\frac{M_0}{M}\right)^kk\to 0$ as $k\to\infty$. We define 
\begin{align*}
h_8(M):=\sup_{k\geq1}{C_1M^{\delta}\left(\frac{M_0}{M}\right)^kk}.
\end{align*}
Then, we have
\begin{align}\label{eq:s0term}
\left\|\partial_t^k\left[t^kG(\cdot,t)\ast\rho_0\right]\right\|_{L^q}\leq h_8(M)M^{k-\delta}k^{k-1}.
\end{align}
For $n\leq L-1$ and $k=L$, we have the following  inequalities similar to \eqref{fineestimate1} and \eqref{fineestimate2}:
\begin{align}\label{eq:ee1}
\left\|\partial_t^n\left[t^n\rho(t)\nabla c(t)\right]\right\|_{L^p}\leq C_2(1+M^{\delta})M^{n-2\delta}n^{n-1}
\end{align}
and
\begin{align}\label{eq:ee2}
\left\|\partial_t^k\left[t^k\rho(t)\nabla c(t)\right]\right\|_{L^p}\leq\left\|\rho(t)\partial_t^k\left(t^k\nabla c(t)\right)\right\|_{L^p}+\left\|\partial_t^k\left(t^k\rho(t)\right)\nabla c(t)\right\|_{L^p}+C_2M^{k-2\delta}k^{k-1},
\end{align}
where $C_2$ is a constant independent of $M$, $q$ and $k$.
Next, we obtain estimates for $S_1$, $S_2$ and $S_3$ in \eqref{eq:direct}. Using similar method as Step 3 of the proof of Theorem \ref{thm:analytic} with $1+\frac{1}{q}=\frac{1}{a}+\frac{1}{p}$, $1<a,p<q<\frac{d}{d-1}$ and $\|\rho(t)\|_{L^q}\leq C_1$ for $1\leq q\leq\infty$, we have
\begin{equation}\label{eq:s1term}
\begin{aligned}
S_1\leq& h_9(M)M^{k-\delta}k^{k-1}+2A_1C_1\int_0^t(t-s)^{-\mu_a}\left\|\partial_s^k\left(s^k\rho(s)\right)\right\|_{L^q}\di s,
\end{aligned}
\end{equation}
\begin{align}\label{eq:s2term}
S_2\leq h_{10}(M)M^{k-\delta}k^{k-1},\qquad S_3\leq h_{11}(M)M^{k-\delta}k^{k-1}
\end{align}
with the three coefficients $h_9(M)=\frac{C_2}{1-\mu_a}T^{1-\mu_a}M^{-\delta}$, $h_{10}=\frac{2C_2}{1-\mu_a}T^{1-\mu_a}[\sup_{j\geq1}\left(\frac{M_0}{M}\right)^jj^{\frac{3}{2}+\frac{d}{2}}]$ and $h_{11}=\frac{A_1C_1^2}{1-\mu_a}T^{1-\mu_a}[\sup_{k\geq1}\left(\frac{M_0}{M}\right)^kk^{\frac{3}{2}+\frac{d}{2}}]$.
Here $\frac{1}{1-\mu_a}T^{1-\mu_a}$ comes from the integral $\int_0^t(t-s)^{-\mu_a}\di s.$
Notice that $h_i(M)\to 0 ~~(i=8,9,10,11)$ as $M\to\infty$.
Define
\begin{align*}
h_0(M):=h_8(M)+h_9(M)+h_{10}(M)+h_{11}(M)
\end{align*}
and 
\begin{align*}
\psi(t):=\left\|\partial_t^k\left(t^k\rho(t)\right)\right\|_{L^q}.
\end{align*}
Combining inequalities \eqref{eq:direct} \eqref{eq:s0term}, \eqref{eq:s1term}, and \eqref{eq:s2term}, we finally obtain
\begin{align*}
\psi(t)\leq h_0(M)M^{k-\delta}k^{k-1}+2A_1C_1\int_0^t(t-s)^{-\mu_a}\psi(s)\di s.
\end{align*}
Using the similar iteration \eqref{eq:c6} of Part I for $i_0$ times, we obtain
\begin{align}
\psi(t)\leq h_0(M)H_3^{(i_0)}M^{k-\delta}k^{k-1}+H_4^{(i_0)}T^{2^{i_0}(1-\mu_a)-1}\int_0^t\psi(s)\di s.
\end{align}
where $H_3^{(i_0)}$ and $H_4^{(i_0)}$ are constants independent of $M$, $q$ and $k$. Then, by Gr\"onwall inequality, we have \eqref{eq:hm} for $\bar{h}_0(M)=H_3^{(i_0)}\exp\{H_4^{(i_0)}T^{2^{i_0}(1-\mu_a)}\}h_0(M)$.

\textbf{Step 2.} The proof of inequality \eqref{eq:assume} for $1<q<\infty$ is similar to the proof of Theorem \ref{thm:regularity1} (or Step 2 of the proof of Theorem \ref{thm:analytic}), and we omit it.

\end{proof}

\bibliographystyle{plain}
\bibliography{bibofKS}

\begin{thebibliography}{10}

\bibitem{bedrossian2014existence}
J.~Bedrossian and N.~Masmoudi.
\newblock Existence, uniqueness and {L}ipschitz dependence for
  {P}atlak--{K}eller--{S}egel and {N}avier--{S}tokes in $\mathbb{R}^2$ with
  measure-valued initial data.
\newblock {\em Arch. Rational Mech. Anal.}, 214(3):717--801, 2014.

\bibitem{biler1995cauchy}
P.~Biler.
\newblock The {C}auchy problem and self-similar solutions for a nonlinear
  parabolic equation.
\newblock {\em Stud. Math.}, 114:181--205, 1995.

\bibitem{biler2010blowup}
P.~Biler and G.~Karch.
\newblock Blowup of solutions to generalized {K}eller--{S}egel model.
\newblock {\em J. Evol. Equ.}, 10(2):247--262, 2010.

\bibitem{biswas2014maximal}
A.~Biswas and C.~Foias.
\newblock On the maximal space analyticity radius for the 3{D}
  {N}avier--{S}tokes equations and energy cascades.
\newblock {\em Ann. Mat. Pura Appl.}, 193(3):739--777, 2014.

\bibitem{blanchet2008convergence}
A.~Blanchet, V.~Calvez, and J.~A. Carrillo.
\newblock Convergence of the mass-transport steepest descent scheme for the
  subcritical {P}atlak--{K}eller--{S}egel model.
\newblock {\em SIAM J. Numer. Anal.}, 46(2):691--721, 2008.

\bibitem{blanchet2012functional}
A.~Blanchet, E.~A. Carlen, and J.~A. Carrillo.
\newblock Functional inequalities, thick tails and asymptotics for the critical
  mass {P}atlak--{K}eller--{S}egel model.
\newblock {\em J. Funct. Anal.}, 262(5):2142--2230, 2012.

\bibitem{blanchet2008infinite}
A.~Blanchet, J.~A. Carrillo, and N.~Masmoudi.
\newblock Infinite time aggregation for the critical {P}atlak-{K}eller-{S}egel
  model in $\mathbb{R}^2$.
\newblock {\em Comm. Pure Appl. Math.}, 61(10):1449--1481, 2008.

\bibitem{blanchet2006two}
A.~Blanchet, J.~Dolbeault, and B.~Perthame.
\newblock Two-dimensional {K}eller-{S}egel model: Optimal critical mass and
  qualitative properties of the solutions.
\newblock {\em Electron. J. Differential Equations}, 2006:Paper--No, 2006.

\bibitem{brezis}
H.~Brezis.
\newblock {\em Functional analysis, {S}obolev spaces and partial differential
  equations}.
\newblock Springer Science \& Business Media, 2010.

\bibitem{calvez2006modified}
V.~Calvez, B.~Perthame, and M.~Sharifi~Tabar.
\newblock Modified {K}eller-{S}egel system and critical mass for the log
  interaction kernel.
\newblock preprint, 2006.

\bibitem{camliyurt2018gevrey}
G.~Camliyurt, I.~Kukavica, and V.~Vicol.
\newblock Gevrey regularity for the {N}avier--{S}tokes in a half-space.
\newblock {\em J. Differential Equations}, 265(9):4052--4075, 2018.

\bibitem{camliyurt2020analyticity}
G.~Camliyurt, I.~Kukavica, and V.~Vicol.
\newblock Analyticity up to the boundary for the {S}tokes and the
  {N}avier-{S}tokes systems.
\newblock {\em Trans. Amer. Math. Soc.}, 373(5):3375--3422, 2020.

\bibitem{carrillo2008asymptotic}
J.~A Carrillo and L.~C.~F. Ferreira.
\newblock The asymptotic behaviour of subcritical dissipative quasi-geostrophic
  equations.
\newblock {\em Nonlinearity}, 21(5):1001, 2008.

\bibitem{dolbeault2009two}
J.~Dolbeault and C.~Schmeiser.
\newblock The two-dimensional {K}eller-{S}egel model after blow-up.
\newblock {\em Discrete Contin. Dyn. Syst. Ser. B}, 25:109--121, 2009.

\bibitem{dong2008spatial}
H.~Dong and D.~Li.
\newblock Spatial analyticity of the solutions to the subcritical dissipative
  quasi-geostrophic equations.
\newblock {\em Arch. Rational Mech. Anal.}, 189(1):131--158, 2008.

\bibitem{dong2009optimal}
H.~Dong and D.~Li.
\newblock Optimal local smoothing and analyticity rate estimates for the
  generalized {N}avier-{S}tokes equations.
\newblock {\em Comm. Math. Sci.}, 7(1):67--80, 2009.

\bibitem{donghongjie2020jfa}
H.~Dong and Q.~S. Zhang.
\newblock Time analyticity for the heat equation and {N}avier-{S}tokes
  equations.
\newblock {\em J. Funct. Anal.}, 279:1--15, 2020.

\bibitem{ferrari1998gevrey}
A.~B. Ferrari and E.~S. Titi.
\newblock Gevrey regularity for nonlinear analytic parabolic equations.
\newblock {\em Comm. Partial Differential Equations}, 23(1-2):424--448, 1998.

\bibitem{foias1989gevrey}
C.~Foias and R.~Temam.
\newblock Gevrey class regularity for the solutions of the {N}avier-{S}tokes
  equations.
\newblock {\em J. Funct. Anal.}, 87:359--369, 1989.

\bibitem{gao2020global}
Y.~Gao, C.~Wang, and X.~Xue.
\newblock Global existence and spatial analyticity for a nonlocal flux with
  fractional diffusion.
\newblock {\em arXiv:2008.08860}, 2020.

\bibitem{giga1983time}
Y.~Giga.
\newblock Time and spartial analyticity of solutions of the {N}avier-{S}tokes
  equations.
\newblock {\em Comm. Partial Differential Equations}, 8(8):929--948, 1983.

\bibitem{giga2002regularizing}
Y.~Giga and O.~Sawada.
\newblock On regularizing-decay rate estimates for solutions to the
  {N}avier-{S}tokes initial value problem.
\newblock {\em Nonlinear Anal. Appl.}, 1:549--562, 2003.

\bibitem{jager1992explosions}
W.~J{\"a}ger and S.~Luckhaus.
\newblock On explosions of solutions to a system of partial differential
  equations modelling chemotaxis.
\newblock {\em Trans. Amer. Math. Soc.}, 329(2):819--824, 1992.

\bibitem{kahane1969}
C.~Kahane.
\newblock On the spatial analyticity of solutions of the {N}avier-{S}tokes
  equations.
\newblock {\em Arch. Rational Mech. Anal.}, 33(5):386--405, 1969.

\bibitem{kato1984strongl}
T.~Kato.
\newblock Strong ${L}^p$-solutions of the {N}avier-{S}tokes equation in
  $\mathbb{R}^m$, with applications to weak solutions.
\newblock {\em Math. Z.}, 187(4):471--480, 1984.

\bibitem{keller1971model}
E.~F. Keller and L.~A. Segel.
\newblock Model for chemotaxis.
\newblock {\em J. Theor. Biol.}, 30(2):225--234, 1971.

\bibitem{kinderlehrer1978analyticity}
D.~Kinderlehrer and L.~Nirenberg.
\newblock Analyticity at the boundary of solutions of nonlinear second-order
  parabolic equations.
\newblock {\em Comm. Pure Appl. Math.}, 31(3):283--338, 1978.

\bibitem{kiselev2008blow}
A.~Kiselev, F.~Nazarov, and R.~Shterenberg.
\newblock Blow up and regularity for fractal {B}urgers equation.
\newblock {\em Dyn. Partial Differential Equations}, 5(3):211--240, 2008.

\bibitem{komatsu1980global}
G.~Komatsu.
\newblock Global analyticity up to the boundary of solutions of the
  {N}avier-{S}tokes equation.
\newblock {\em Comm. Pure Appl. Math.}, 33(4):545--566, 1980.

\bibitem{kukavica2009radius}
I.~Kukavica and V.~Vicol.
\newblock On the radius of analyticity of solutions to the three-dimensional
  {E}uler equations.
\newblock {\em Proc. Amer. Math. Soc.}, 137(2):669--677, 2009.

\bibitem{lemarie2000}
P.~G. Lemari\'e-Rieusset.
\newblock A remark on the analyticity of the mild solutions of
  {N}avier-{S}tokes equations in $\mathbb{R}^3$.
\newblock {\em C. R. Acad. Sci. Paris S\'er.}, 330(3):183--186, 2000.

\bibitem{li2011one}
D.~Li and J.~L. Rodrigo.
\newblock On a one-dimensional nonlocal flux with fractional dissipation.
\newblock {\em SIAM J. Math. Anal.}, 43(1):507--526, 2011.

\bibitem{liu2016refined}
J.-G. Liu and J.~Wang.
\newblock Refined hyper-contractivity and uniqueness for the {K}eller--{S}egel
  equations.
\newblock {\em Appl. Math. Lett.}, 52:212--219, 2016.

\bibitem{masuda1967on}
K.~Masuda.
\newblock On the analyticity and the unique continuation theorem for solutions
  of the {N}avier-{S}tokes equations.
\newblock {\em Proc. Japan acad}, 43(9):827--832, 1967.

\bibitem{sawada2006on}
H.~Miura and O.~Sawada.
\newblock On the regularizing rate estimates of {K}och-{T}ataru's solution to
  the {N}avier-{S}tokes equations.
\newblock {\em Asymptot. Anal.}, 49(1-2):1--15, 2006.

\bibitem{oliver2000remark}
M.~Oliver and E.~S. Titi.
\newblock Remark on the rate of decay of higher order derivatives for solutions
  to the {N}avier--{S}tokes equations in $\mathbb{R}^n$.
\newblock {\em J. Funct. Anal.}, 172(1):1--18, 2000.

\bibitem{patlak1953random}
C.~S. Patlak.
\newblock Random walk with persistence and external bias.
\newblock {\em Bull. Math. Biophys.}, 15(3):311--338, 1953.

\bibitem{sawada2005analyticity}
O.~Sawada.
\newblock On analyticity rate estimates of the solutions to the
  {N}avier--{S}tokes equations in {B}essel-potential spaces.
\newblock {\em J. Math. Anal. Appl.}, 312(1):1--13, 2005.

\bibitem{sawada2008spatial}
O.~Sawada.
\newblock On the spatial analyticity of solutions to the {K}eller-{S}egel
  equations.
\newblock {\em preprint}, 2008.

\bibitem{wei2018global}
D.~Wei.
\newblock Global well-posedness and blow-up for the 2-{D}
  {P}atlak--{K}eller--{S}egel equation.
\newblock {\em J. Funct. Anal.}, 274(2):388--401, 2018.

\bibitem{weissler1980local}
F.~B. Weissler.
\newblock Local existence and nonexistence for semilinear parabolic equations
  in ${L}^p$.
\newblock {\em Indiana Univ. Math. J.}, 29(1):79--102, 1980.

\end{thebibliography}

\end{document}